\documentclass[reqno,12pt]{amsart}
\usepackage{graphicx}
\usepackage{amssymb}
\usepackage{amsmath}
\usepackage{amsthm}
\usepackage{paralist}
\usepackage{mathtools}
\usepackage{faktor} 
\usepackage{tikz}
\usepackage{csquotes}
\usepackage{caption}
\usetikzlibrary{babel,matrix,arrows,decorations.pathmorphing}
\usepackage{tikz-cd}
\usepackage[table,dvipsnames]{xcolor}
\usepackage{algorithm}
\usepackage[noend]{algpseudocode}

\usepackage{rotating}

\tikzset{commutative diagrams/.cd}
\usepackage[margin = 3cm]{geometry}
\newtheorem{Lemma}{Lemma}[section]
\newtheorem{Corollary}[Lemma]{Corollary}
\newtheorem{Proposition}[Lemma]{Proposition}
\newtheorem{Theorem}[Lemma]{Theorem}

\theoremstyle{definition}
\newtheorem{Example}[Lemma]{Example}
\newtheorem{Definition}[Lemma]{Definition}
\newtheorem{Claimm}[Lemma]{Claim}
\newtheorem{Possibility}[Lemma]{Realization}
\newtheorem{Remark}[Lemma]{Remark}
\newtheorem*{convention}{Convention}
\newtheorem*{question}{Question}

\newtheorem*{Claim}{Claim}

\newcommand{\discgr}[1]{\text{$A_{#1}$}}
\newcommand{\discg}{\text{$A_{\textsc{NS}(X)}$}}
\newcommand{\NS}{\textsc{NS$(X)$}}
\newcommand{\NSY}{\textsc{NS$(Y)$}}

\newcommand{\dul}[1]{#1^\vee}

\newcommand{\bil}[2]{\left(#1.#2  \right)}
\newcommand{\bilemp}{\bil{\text{-}}{\text{-}}}

\newcommand{\mC}{\mathbb C}
\newcommand{\mZ}{\mathbb Z}

\newcommand{\hyz}{H^2(Y,\mZ)}

\DeclareMathOperator{\romantwo}{\MakeUppercase{\romannumeral 2}}

\usepackage[style=alphabetic]{biblatex}
\DeclareMathOperator{\sign}{sign}
\DeclareMathOperator{\Aut}{Aut}
\DeclareMathOperator{\rk}{rk}
\DeclareMathOperator{\id}{id}
\DeclareMathOperator{\GL}{GL}

\usepackage{hyperref}
\hypersetup{
    colorlinks=true,
    linkcolor=Blue,
    filecolor=magenta,      
    urlcolor=cyan,
    citecolor=RedViolet,
    pdftitle={Extensions of maximal symplectic groups acting on
the superspecial K3 surface in characteristic p}
    }

\date{} 
\author{Sophie E. Friesen}
\address{Institut f\"{u}r Algebraische Geometrie, Leibniz Universit\"{a}t Hannover, Welfengarten 1, 30167 Hannover, Germany.}
\email{friesen@math.uni-hannover.de}
\begin{document}
\title[Extensions of maximal symplectic groups on superspecial K3s]{Extensions of maximal symplectic groups acting on the superspecial $K3$ surface in characteristic $p$
}
\begin{abstract}
   \noindent We classify all possible finite groups acting faithfully on the superspecial $K3$ surface which contain the maximal symplectic groups, as classified by Ohashi and Schütt.
\end{abstract}
\maketitle

\section{Introduction}
Automorphism groups of $K3$ surfaces have been an active area of research for the last decades. Starting with foundational work by Nikulin \parencite{NikulinK3}, a central question was the following:
\begin{question}
    Let $G$ be a finite group. Is there a $K3$ surface $X$ over an algebraically closed field $k$ such that $G$ embeds into $\text{Aut}(X)$?
\end{question}
\noindent In order to study finite groups of automorphisms of $K3$ surfaces, it is helpful to first focus on \textit{symplectic} automorphisms, i.e.\ automorphisms that leave a nowhere vanishing two-form invariant. Let $X$ be a $K3$ surface. We denote the group of symplectic automorphisms as $\Aut_s(X)$. The symplectic automorphisms contained in $G\subset \text{Aut}(X)$ form a normal subgroup, denoted by $G_s$. A finite symplectic group $G_s\subset \Aut_s(X)$ is called \textit{maximal} if there is no finite subgroup $G_s'$ such that $G_s\subsetneq G_s'\subsetneq\Aut_s(X)$. A finite group $G$ whose symplectic subgroup is $G_s$ is called an \textit{extension} of $G_s$ and $ [G:G_s]$ is called the \textit{non-symplectic index} of $G$ (cf. \parencite{JangIndex}). An extension is \textit{non-trivial} if $[G:G_s]\geq 2$. \\\
Over the complex numbers, Mukai \parencite{Mukai1988} classifies all the possible finite groups which can occur as $G_s$.
This has been extended to a classification of the finite groups with saturated symplectic subgroups by Brandhorst, Hashimoto and Hofmann \parencite{brandhorst-hashimoto}\parencite{Brandhorst_Hofmann_2023}. 
These classifications need not hold over an algebraically closed field $k$ of characteristic $p$ due to the existence of \textit{supersingular} $K3$ surfaces, as has been noted by Dolgachev and Keum \parencite{DolgachevKeum2009}. Indeed, the symplectic automorphism group of these $K3$ surfaces can contain finite subgroups that are not realized over the complex numbers, see for instance \parencite[Theorem 1.1]{ohashi2024finite}. The following is the main result of this paper.
\begin{Theorem}\label{mainmainresult}
    Let $X$ be a $K3$ surface over an algebraically closed field of characteristic $p >11$. Assume that $G\subset \Aut(X)$ is a finite group such that $G_s$ does not appear over the complex numbers (i.e.\ cannot be found in the classification of Mukai \parencite{Mukai1988}). Then $G = G_s$.

\end{Theorem}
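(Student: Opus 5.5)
We reduce to the superspecial case and then argue lattice-theoretically with the action of $G/G_s$ on the discriminant group. First, $G_s$ is not in Mukai's list. By the results of Dolgachev--Keum, a finite symplectic group acting tamely on a $K3$ surface with $p>11$ that is not of finite height lifts to characteristic zero, so it appears in Mukai's list. Hence $X$ must be supersingular. Following Ohashi--Schütt \parencite{ohashi2024finite}, the symplectic groups not realized over $\mathbb C$ for $p>11$ occur only on the superspecial $K3$ surface of Artin invariant $\sigma=1$, and each is contained in one of their maximal symplectic groups.

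We then pass to the maximal case. If $G\supsetneq G_s$, we would like to enlarge $G_s$ to a maximal $G_s'$ normalised by a non-symplectic element. This is not automatic. The more robust route is to work directly with $G_s$. Consider the coinvariant lattice $L_{G_s}=(\NS^{G_s})^\perp$ and the invariant lattice $\NS^{G_s}$. Since $G_s$ is not a Mukai group, Ohashi--Schütt show that $\NS^{G_s}$ has rank $1$, or very small rank, generated by an ample class $h$ of known degree. Moreover, $L_{G_s}$ is a Leech-type lattice whose discriminant group has $p$-part coming from $\discg\cong\mathbb F_{p^2}$.

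The key step is the action of $G/G_s$. Let $g\in G$ be non-symplectic, with $g^*\omega=\zeta\omega$ and $\zeta\neq1$ of order $n$ prime to $p$. Because $G_s$ is normal in $G$, $g$ preserves $\NS^{G_s}$. If this lattice has rank $1$, then $g^*h=h$. The action of $g$ on $\discg\cong\mathbb F_{p^2}$ is multiplication by an element determined by $\zeta$ through the crystalline/Ogus period. For the superspecial surface, the period is the kernel of $\NS\otimes k\to H^2_{dR}$, and $g$ acts there via $\zeta$. On the other hand, $\discg$ is split between the discriminant groups of $\NS^{G_s}$ and $L_{G_s}$. Computing the $p$-part, we would show it lies entirely in $A_{L_{G_s}}$. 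There, $G_s$ acts trivially, so $g$ acts on it through the image of $O(L_{G_s})^{G_s}$-normaliser modulo $G_s$. This is a finite group we can read off, e.g. as the centraliser of $G_s$ in $\mathrm{Co}_0$ or $O(\text{Leech-type})$. We would then show that this image acts on $A_{L_{G_s}}\otimes\mathbb F_p$ only by $\pm1$, or trivially, while $\zeta$ acting on $\mathbb F_{p^2}$ requires an element of order $n$ compatible with the Frobenius conjugation condition $\zeta^{p+1}=1$. The case $\zeta=-1$ must be excluded separately. Using $g^*h=h$ and the Torelli-type description, $-1$ on the discriminant would force $g$ to act as $-\mathrm{id}$ on $L_{G_s}$. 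This gives a contradiction via the holomorphic Lefschetz or fixed-locus argument: the fixed locus would have to account for a rank-21 anti-invariant part, which is impossible for an involution with $\NS^{g}$ of rank $1$ on a supersingular surface.

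The main obstacle is this case analysis over the finitely many maximal groups in Ohashi--Schütt's list. For each group we must compute the centraliser of $G_s$ acting on the discriminant form, and verify that no element realises a nontrivial $\zeta$ compatible with the superspecial period. This would be done table-wise, group by group, possibly with computer help.
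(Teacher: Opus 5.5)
Your reduction to the superspecial surface matches the paper's. The key step, however, has a genuine gap: the lattice picture it relies on is the reverse of what happens for $p>11$.

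For $p>11$, a group outside Mukai's list has the same rank-$20$ coinvariant lattice as its saturation, which is one of the groups in Table~\ref{Rk2tame}. So $\NS^{G_s}$ is an indefinite lattice of rank $2$, not rank $1$. Rank-one invariant lattices only occur for the wild actions in characteristic $\le 11$.

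More importantly, the determinants of these coinvariant lattices involve only the primes $2,3,5,7$. Hence $A_{\NS_{G_s}}$ has trivial $p$-part, no gluing along $p$ occurs, and all of $\discg\cong(\mathbb Z/p\mathbb Z)^2$ comes from $A_{\NS^{G_s}}$. This has three consequences:
\begin{itemize}
\item[(a)] Your step ``show the $p$-part lies entirely in $A_{L_{G_s}}$'' is false.
\item[(b)] The proposed centraliser computation on $A_{L_{G_s}}\otimes\mathbb F_p$ concerns the zero space and cannot exclude anything.
\item[(c)] Your separate treatment of $\zeta=-1$ rests on the same wrong premise. It also asserts, without justification, that $-1$ on the discriminant forces $-\mathrm{id}$ on $L_{G_s}$.
\end{itemize}

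The missing idea is short and removes any need for a group-by-group computation.
\begin{itemize}
\item[(1)] Since $\NS^{G_s}$ has rank $2$ and $l_p(A_{\NS^{G_s}})=2$, it is a rescaling $L'[p]$.
\item[(2)] Because there is no $p$-gluing, every $x\in\NS^{G_s}$ satisfies $(x.\NS)\subseteq p\mathbb Z$. That is, no element of $\NS^{G_s}$ is dually primitive in $\NS$.
\item[(3)] A finite $G$ fixes a primitive ample class $H\in\NS^{G}\subseteq\NS^{G_s}$. So $[H/p]$ is a nonzero $G$-fixed element of $\discg$.
\item[(4)] By Theorem~\ref{crystallinetorelli}, every element of $G/G_s$ acts on $\discg\cong\mathbb F_{p^2}$ preserving both strictly characteristic lines. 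This means it acts as multiplication by a norm-one element, which is exactly the description you yourself give.
\item[(5)] Such a multiplication with a nonzero fixed vector is the identity, so Theorem~\ref{critsymp} gives $G=G_s$.
\end{itemize}
This is Lemma~\ref{AusschlussDuallyPrimitive} combined with Example~\ref{RescalingExample} and Proposition~\ref{rk2222}. It applies verbatim to non-maximal $G_s$, because they share the rank-$2$ invariant lattice of their saturation. So your instinct to work directly with $G_s$ was sound. But the obstruction lives in the invariant lattice, through the fixed ample class, not in the coinvariant lattice.
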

\noindent This result is in contrast to the complex case, where each maximal group has an action on some algebraic complex singular $K3$ surface that possesses a non-trivial extension \parencite{brandhorst-hashimoto}. The proof of Theorem \ref{mainmainresult} will be discussed in Section \ref{mainmainresultproof}. An important input for this proof will be Theorem \ref{TameCase}(i) as well as the results of \parencite{ohashi2024finite}. \\
For every prime number $p$, among the supersingular $K3$ surfaces there is a unique one with Artin invariant 1 called the \textit{superspecial} $K3$ surface. More details on this are discussed in Section \ref{K3s}. 
The classification of finite symplectic actions on the superspecial $K3$ surface was recently completed by Sch\"utt and Ohashi in \parencite{ohashi2024finite}, with further insights on the case of higher Artin invariants given by Wang and Zheng in \parencite{wang2024finitegroupssymplecticautomorphisms}. The superspecial $K3$ surface has very exceptional properties when considering symplectic automorphisms. Namely, in a given characteristic $p$, all finite symplectic groups $G_s$ with $p \nmid \vert G_s\vert$ (i.e.\ \textit{tame groups}) which do not appear in the classification of Mukai act exclusively on the superspecial $K3$ surface \parencite[Theorem 1.1]{ohashi2024finite}. It is therefore a natural question to study the possible extensions of these actions on superspecial $K3$ surfaces.\\
 In this work we aim to extend the results of Ohashi and Sch\"utt to finite groups which contain the maximal symplectic subgroups using methods derived from lattice theory. This mirrors the works of Brandhorst and Hashimoto over $\mathbb C$ \parencite{brandhorst-hashimoto}.                                                                  
\begin{Theorem}\label{TameCase}
    Let $k$ be an algebraically closed field of characteristic $p> 11$, $X$ the superspecial $K3$ surface over $k$ and $G_s\subset\Aut_s(X)$ be a finite maximal group. Then one of the following holds 
    \begin{itemize}
        \item[(i)] $G_s$ does not appear over the complex numbers (i.e.\ is not in the classification of Mukai \parencite{Mukai1988}) and has no non-trivial extension.
        \item[(ii)] $G_s$ is one of the maximal groups over $\mathbb C$ classified in \parencite{Mukai1988} and, depending on $p$, there is an extension $G\subset \Aut(X)$ of $G_s$ with non-symplectic index:
        \begin{center}{\rowcolors{2}{white}{lightgray}
\begin{tabular}{ |c|c|c|c|c| } 
\hline
$G_s$& $\vert G :G_s\vert= 1$&$\vert G :G_s\vert = 4$ & $\vert G :G_s\vert =6$ & $\vert G : G_s \vert = 2$\\
\hline
$T_{48}$ &  $p  \equiv 1 \mod 24$ & - & $p\equiv 5 \mod 6$ & otherwise\\
$N_{72}$ & $p  \equiv 1 \mod 24$ & $p\equiv 3 \mod 4$ & - & otherwise\\
$M_9$ & $p  \equiv 1 \mod 24$ & - & $p\equiv 5 \mod 6$& otherwise\\
$\mathfrak S_5$ & $p \equiv 1,49 \mod 120$ &- &-& otherwise\\
$L_2(7)$ &  $p  \equiv 1,9,25 \mod 56$& $p\equiv 3 \mod 4$ & -& otherwise\\
$H_{192}$ & $p  \equiv 1 \mod 24$ & -&-& otherwise\\
$T_{192}$ & $p  \equiv 1 \mod 24$ & - & $p\equiv 5 \mod 6$& otherwise\\
$\mathfrak A_{4,4}$ & $p  \equiv 1 \mod 24$& $p\equiv 3 \mod 4$ & -& otherwise\\
$\mathfrak A_{6}$ &  $p
\equiv 1,49 \mod 120$ & $p\equiv 3 \mod 4$ & -& otherwise\\
$F_{384}$ & $p\equiv 1 \mod 8$& $p\equiv 3 \mod 4$ & -& otherwise\\
$M_{20} $& $p  \equiv1,9 \mod 40$&$p\equiv 3 \mod 4$ & -& otherwise\\
\hline
\end{tabular}}
\captionof{table}{Non-trivial extensions of the Mukai groups}
\end{center}
Furthermore, every extension $G'\subset \Aut(X)$ of $G_s$ in characteristic $p$ the index $[G’:G_s]$ divides $[G:G_s]$
for the respective congruence class of $p$ above.

    \end{itemize}
\end{Theorem}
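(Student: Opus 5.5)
The plan is to follow the lattice-theoretic method of Brandhorst--Hashimoto, transported to the superspecial setting. Let $X$ be the superspecial $K3$ surface. Then $\NS$ is the unique even lattice of signature $(1,21)$ with discriminant group $\discg\cong \mathbb F_p^2$, and its discriminant form is anisotropic of a type determined by $p$. Since $p>11$ and $G_s$ is maximal, \parencite[Theorem 1.1]{ohashi2024finite} tells us that $G_s$ is tame. Consequently the invariant lattice $\NS^{G_s}$ and the coinvariant lattice $L_{G_s}:=(\NS^{G_s})^\perp$ are determined, with $L_{G_s}$ negative definite of rank $\leq 21$ and $G_s$ acting trivially on its discriminant group. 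The first step is to read off from Ohashi--Schütt, for each maximal $G_s$, the pair $(\NS^{G_s}, L_{G_s})$ and the gluing $\NS^{G_s}\oplus L_{G_s}\subset \NS$.

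The second step handles a finite extension $G\supset G_s$. Because $G_s$ is normal, $G$ acts on $\NS^{G_s}$ and on $L_{G_s}$. On $L_{G_s}$ the image lies in the normalizer $N=N_{O(L_{G_s})}(G_s)$, and its class modulo $G_s$ lies in $N/G_s$, which acts on $\discgr{L_{G_s}}$. We then use the standard description of non-symplectic automorphisms on a supersingular $K3$: $G/G_s$ is cyclic of order $n$, acting on $H^0(\Omega^2)$ by a primitive $n$-th root of unity. Via the crystalline/Ogus description, the action on $\discg\cong\mathbb F_p^2$ is through a cyclic subgroup of $O(\discg)$, which is cyclic of order $2(p+1)$ up to $\pm$. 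Moreover, the character on $\discg$ matches the character on the $2$-form, with $\zeta\mapsto\zeta^{p}$ as eigenvalues. The consequence is that $n\mid p+1$, and in fact $n$ must divide the order of the image of $G$ in $O(\discg)$ modulo the sign ambiguity. So for each $G_s$ we will compute the finite group
\[ \mathrm{Im}\bigl(N_{O(\NS^{G_s})\times O(L_{G_s})}(G_s)\ \text{compatible with gluing}\bigr)\to O(\discg), \]
and the possible $n$ will be the orders of elements whose eigenvalue condition fits $p\bmod$ the relevant modulus.

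For (i), with $G_s$ non-Mukai, $\NS^{G_s}$ typically has rank $1$ and $L_{G_s}$ rank $21$ (or a small invariant part). The gluing then forces any element of $N$ to act on $\discg$ as $\pm1$ and trivially on $\NS^{G_s}$ up to sign. An ample invariant class must be preserved, so the sign is $+$. Hence $G$ acts trivially on $\discg$, which forces $n=1$ and gives $G=G_s$. The key lemma here is that $N/G_s$ acts by $\pm1$ on $\discgr{L_{G_s}}$, which I would verify group by group with the explicit Ohashi--Schütt lattices (often $N=G_s\times\{\pm1\}$ by maximality).

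For (ii) we proceed in two directions. For existence, for each Mukai group and each congruence class, we exhibit an isometry $g$ of $\NS$ normalizing $G_s$, preserving an ample class in $\NS^{G_s}$ and acting on $\discg$ by an element of order $n\in\{2,4,6\}$ with the correct eigenvalue. We then apply the strong Torelli theorem for supersingular $K3$ surfaces (Ogus) to realize $\langle G_s,g\rangle$. Since $\NS^{G_s}$ has rank $\geq 2$ here, there is room to choose an element of order $4$ (when $p\equiv3 \bmod 4$, so that $i\notin\mathbb F_p$ and $4\mid p+1$) or of order $6$ (when $p\equiv5\bmod 6$). For the divisibility claim, every $n$ must divide $\gcd(p+1,\ \text{orders available in } N/G_s)$. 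The trivial rows ($n=1$) arise when $p$ splits completely in the relevant cyclotomic field, so that $\NS^{G_s}\otimes\mathbb F_p$ forces $\discg$ to pair in a way leaving no room for non-trivial action. The main obstacle is this case-by-case computation of normalizers and their images in $O(\discg)$, and especially checking the ample-cone (chamber) condition for the existence of extensions. I would attempt that computation by computer (e.g.\ Magma/OSCAR), as Brandhorst--Hashimoto do.
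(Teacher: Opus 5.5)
\paragraph{Part (i).} Your argument rests on the wrong structural picture. For $p>11$ every maximal non-Mukai group acts with exactly four orbits. So $\mathrm{rk}\,\mathrm{NS}(X)^{G_s}=2$ (not $1$), $\mathrm{rk}\,L_{G_s}=20$, and $p\nmid\det L_{G_s}$, since its prime divisors lie in $\{2,3,5,7\}$.

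Consequently all of $A_{\mathrm{NS}(X)}\cong\mathbb F_p^2$, which is where non-symplecticity is detected, comes from $A_{\mathrm{NS}(X)^{G_s}}$. The key lemma you propose to check case by case, that $N/G_s$ acts by $\pm1$ on $A_{L_{G_s}}$, therefore says nothing about it.

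Moreover, the non-trivial candidate action is not $-1$. An isometry of the rank-$2$ indefinite lattice $\mathrm{NS}(X)^{G_s}$ fixing the ample class $H$ is either the identity or the reflection in $H^\perp$. So ``the ample class is preserved, hence the sign is $+$'' does not conclude.

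The missing step is the following. Since $\mathrm{rk}=l_p=2$, we get $\mathrm{NS}(X)^{G_s}\cong L'[p]$, so $H$ is not dually primitive. Its class is then a nonzero vector of $A_{\mathrm{NS}(X)}$ fixed by $G$. An isometry of this anisotropic plane fixing a nonzero vector is trivial or a reflection. A reflection swaps the two strictly characteristic subspaces, so crystalline Torelli gives $G=G_s$; this is the paper's Lemma~\ref{AusschlussDuallyPrimitive}.

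Note also that $O(q_{\mathrm{NS}(X)})$ is dihedral of order $2(p+1)$, not cyclic. Automorphisms act through its rotation subgroup of order $p+1$, in which only the identity fixes a nonzero vector. That observation would also close the step, but only once you know $H$ is not dually primitive.

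\paragraph{Part (ii).} Your plan contains nothing that produces conditions valid for all primes $p>11$. $\mathrm{NS}(X)$ and $\mathrm{NS}(X)^{G_s}$ vary with $p$, so per-prime normalizer computations cannot yield congruence classes.

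The heuristic that rank $\geq 2$ ``leaves room'' for order $4$ when $p\equiv 3\bmod 4$, or order $6$ when $p\equiv5\bmod6$, is false. By the table, $T_{48}$, $M_9$, $\mathfrak S_5$, $H_{192}$ and $T_{192}$ never have index $4$, and $\mathfrak S_5$ and $H_{192}$ never have index $6$.

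What you are missing is the paper's reduction to the complex case:
\begin{itemize}
\item A non-trivial extension fixes a dually primitive ample $H$ with $p\nmid H^2$.
\item Its complement $T'$ in $\mathrm{NS}(X)^{G_s}$ is a $p$-rescaled negative definite rank-$2$ lattice on which $G/G_s$ acts faithfully. Hence $T'$ is a rescaled $L_4$ or $L_6$ for index $4$ or $6$.
\item The index satisfies $[\mathrm{NS}(X)^{G_s}:\langle H\rangle\oplus T']\in\{1,2\}$.
\item Nikulin's existence theorem trades $T'$ for a positive definite $T$ in the same genus away from $p$. This $T$ is $T(Y)$ for a complex singular $K3$ surface $Y$ carrying an extension of the same index.
\item Conversely, a surface from Brandhorst--Hashimoto's finite list transfers to $X$ exactly when $\left(\frac{\det T(Y)}{p}\right)=-\left(\frac{-1}{p}\right)$. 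This is the $p$-adic constraint coming from the genus $p^{\epsilon 2}$ of $\mathrm{NS}(X)$.
\end{itemize}

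The table, the trivial rows and the divisibility claim all come from evaluating this Legendre symbol on that finite list. For example, for $T_{48}$ and $p\equiv1\bmod24$, the determinants $192,96,32$ are all squares mod $p$, so no extension exists. Your phrase ``$p$ splits completely in a cyclotomic field'' describes the outcome but supplies no mechanism.

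You would also still need two further ingredients:
\begin{itemize}
\item Ampleness of $H$, i.e.\ no roots in $H^\perp$. The paper uses the bound $p\geq 2[H^\perp:T'\oplus\mathrm{NS}(X)_{G_s}]$ plus a computer check for small $p$.
\item The lift across the gluing with the coinvariant lattice. The paper uses Hashimoto's surjectivity $O(\Lambda_{G_s})\to O(q_{\Lambda_{G_s}})$.
\end{itemize}
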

\noindent 
We note that Theorem \ref{TameCase} holds even more generally. Namely, the groups that are maximal over the complex numbers act on the superspecial $K3$ surface in any characteristic $p\geq 3$, but this symplectic action is not always maximal. The table in Theorem \ref{TameCase}(ii) also holds for primes where the group action of $G_s$ is not maximal on the superspecial $K3$ surface, as long as $p \nmid \vert G_s\vert$, i.e.\ the action of $G_s$ is \textit{tame}.
More details on this will be discussed in Section \ref{tame}, where the proof of Theorem \ref{TameCase} can be found in Section \ref{mainmainresultproof}. The proof of part (i) will rely on a lattice theoretic criterion which obstructs the existence of a non-trivial extension (see Lemma  \ref{AusschlussDuallyPrimitive} as well as Remark \ref{explanationduallyprim}). For the proof of part (ii), we note that group actions from complex $K3$ surfaces can descend to group actions on the superspecial $K3$ surface. 
Namely, assume that  $G\subset \Aut(Y)$ is a finite group and $Y$ is a  complex $K3$ surface which has a good reduction modulo $p$ and whose reduction is isomorphic to the superspecial $K3$ surface in characteristic $p$. Then if the action of $G$ also has a good reduction modulo $p$, clearly $G$ must be contained in the automorphism group of the superspecial $K3$ surface $X$ in characteristic $p$. The goal of Section \ref{MukaiGroups} is 
to show that a weaker converse statement holds. Namely, 
a finite group $G$ can only embed into $\Aut(X)$ such that $G_s$ is a Mukai group if there is a complex $K3$ surface $Y$ such that $Y$ 'almost' has superspecial reduction modulo $p$, $G_s\subset \Aut_s(Y)$ and there is an extension of $G_s$ on $Y$ with the same non-symplectic index as $G$. The rigorous formulation of this goal can be found in Theorem \ref{CorrespondenceComplexSupSpe}. This equivalence also explains, that in certain characteristics no non-trivial extensions can occur; in essence, we may see from the results of \parencite{brandhorst-hashimoto} that no suitable complex $K3$ surface $Y$ exists.
\\
Theorem \ref{TameCase} showcases that there are multiple instances of maximal symplectic groups that do not have a non-trivial extension. We exhibit a condition on $p$ such that every maximal symplectic group has a non-trivial extension.
\begin{Theorem}\label{alotofcongruences}
    Let $X$ be the superspecial $K3$ surface in characteristic $p > 11$. Then every finite maximal symplectic group $G_s\subset \Aut_s(X)$ has a non-trivial extension if and only if 
    $p$ satisfies \[p\equiv -1,-(11^2),-(13^2),-(17^2),-(19^2),-(23^2)\mod 840.\]
\end{Theorem}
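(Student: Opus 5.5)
Since $p>11$, Theorem \ref{TameCase} describes every finite maximal symplectic group on $X$. By part (i), a maximal $G_s$ that is not a Mukai group has no non-trivial extension. So the condition ``every maximal $G_s$ has a non-trivial extension'' holds if and only if two things are true. First, no non-Mukai maximal group occurs in characteristic $p$. Second, for each of the eleven Mukai groups in Table 1, $p$ avoids the column $[G:G_s]=1$.

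The plan is to translate both requirements into congruence conditions on $p$ and then intersect them.

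\textbf{Step 1 (Mukai rows).} Read the column $|G:G_s|=1$ of Table 1. The excluded classes are $p\equiv 1 \bmod 24$ for $T_{48},N_{72},M_9,H_{192},T_{192},\mathfrak A_{4,4}$; $p\equiv 1,49\bmod 120$ for $\mathfrak S_5,\mathfrak A_6$; $p\equiv 1,9,25\bmod 56$ for $L_2(7)$; $p\equiv 1\bmod 8$ for $F_{384}$; and $p\equiv 1,9\bmod 40$ for $M_{20}$.

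Avoiding $p\equiv1 \bmod 8$ already rules out $1\bmod 24$, $1,49\bmod 120$, $1,9,25\bmod 56$ and $1,9\bmod 40$, since all of these residues are $\equiv 1\bmod 8$. So the Mukai rows impose exactly $p\not\equiv 1\bmod 8$.

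A caveat applies here. The table is stated for $G_s$ maximal on $X$, and by the remark after Theorem \ref{TameCase} the rows remain valid whenever the action is tame, which holds automatically for $p>11$. One must also make sure that each Mukai group is still maximal, or else is contained in a larger non-Mukai group. In the latter case the Mukai group is not maximal and its row is irrelevant. I would handle this through Step 2.

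\textbf{Step 2 (non-Mukai maximal groups).} Use the classification of \parencite[Theorem 1.1]{ohashi2024finite} of tame symplectic groups on the superspecial $K3$ surface. For each non-Mukai group there, extract the congruence condition on $p$ under which it embeds, equivalently under which the needed lattice embedding into $\mathrm{NS}(X)$ with discriminant $p$ exists. These conditions are typically of the form ``$p$ splits or is inert in certain cyclotomic or quadratic fields'', that is, conditions modulo $3,5,7$ and $8$.

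For $p>11$, any finite symplectic group sits inside a maximal one. So the absence of all non-Mukai maximal groups is equivalent to the absence of all non-Mukai tame groups. I expect the condition to come out as: $p\equiv -1\bmod 8$ (the Leech-type constructions need $-1$ to be suitably a non-square or non-norm), $p\equiv -1\bmod 3$, $p\not\equiv \pm1$-type residues modulo $5$, and similarly modulo $7$. In total this should say that $-p$ is a square modulo $3$, $5$ and $7$ and $p\equiv 7 \bmod 8$.

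\textbf{Step 3 (combine).} Intersect the conditions from Steps 1 and 2 in $(\mZ/840\mZ)^\times$, using $840=8\cdot3\cdot5\cdot7$. The target set consists of $-1$ and $-q^2$ for the primes $q=11,13,17,19,23$. Since the squares of units modulo $3\cdot5\cdot7$ number $1\cdot2\cdot3=6$, this set is exactly $\{-u^2\}$, and each such class is $\equiv 7\bmod 8$. Verifying this is a routine CRT computation.

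The main obstacle is Step 2: extracting precisely, from \parencite{ohashi2024finite}, the primes for which a non-Mukai group acts, with exact congruence conditions including both directions. For the ``if'' direction we must also confirm that every Mukai group is then maximal, which follows once no non-Mukai overgroup exists. I would first tabulate the orders $|G_s|$ of the non-Mukai groups and the discriminant forms of their coinvariant lattices. Then I would apply the criterion that $\mathrm{NS}(X)$, the unique even lattice of signature $(1,21)$ and discriminant $-p^2$, contains the coinvariant lattice, checking this prime by prime via local conditions at $3,5,7$ and $2$.
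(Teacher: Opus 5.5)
Your reduction matches the paper's. By Theorem \ref{TameCase}(i), the property holds if and only if no non-Mukai maximal group occurs and, for every Mukai group, $p$ avoids the index-$1$ column of Table 1. Your Step 1 observation is also correct: the union of those index-$1$ classes is exactly $p\equiv 1\pmod 8$, since the $F_{384}$ row absorbs all the others.

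The gap is Step 2. It carries all the remaining arithmetic, and you only predict its outcome rather than prove it; the prediction is wrong. The paper invokes \parencite[Theorem 1.6]{ohashi2024finite}, which says that only Mukai groups are maximal precisely when $p\equiv \pm1,\pm 11^2,\pm 13^2,\pm 17^2,\pm 19^2,\pm 23^2 \pmod{840}$, i.e.\ $p\equiv \pm u^2$. Equivalently, $2,3,5,7$ are all quadratic residues modulo $p$. Nothing is required of $-1$: for instance, for $p\equiv 1\pmod{840}$ no non-Mukai group acts at all. Your guess ``$p\equiv 7\pmod 8$ and $-p$ a square modulo $3,5,7$'' captures only the $-u^2$ half of this set.

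This matters for the argument even though your final congruence is right. Under your version of Step 2, Step 1 would be vacuous and Table 1 would play no role in the theorem, which should have been a warning sign. Worse, in the ``only if'' direction, for $p\equiv u^2\pmod{840}$ you would be asserting that some non-Mukai maximal group exists, which is false. The correct reason the property fails for these primes is different. There, all Mukai groups are maximal, and since $u^2\equiv 1\pmod{24}$, Table 1 gives none of them a non-trivial extension (for $F_{384}$ this is just $p\equiv 1\pmod 8$).

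To repair the proof, take Step 2 from \parencite[Theorem 1.6]{ohashi2024finite}, or derive it from Theorem \ref{matthiasmainresulttame} by evaluating $\left(\frac{l_1\cdots l_4}{p}\right)$ for the four-orbit subgroups of $M_{23}$. Then intersect: $\{\pm u^2\}\cap\{p\not\equiv 1 \bmod 8\}=\{-u^2\}$, because every odd square is $1$ modulo $8$. That is exactly the paper's two-line proof.
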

\noindent This will be proven in Section \ref{mainmainresultproof}.\\
Furthermore, we will classify extensions of maximal symplectic actions in characteristic $p\leq 11$. In this setting \textit{wild} automorphism groups (i.e.\ $p\mid \vert G_s\vert$) appear. For every non-trivial extension, we will exhibit a model of an extension with the highest possible non-symplectic index in Section \ref{Char23}. We obtain the following result.
\begin{Theorem}\label{wildcase}
     Let $k$ be an algebraically closed field of characteristic $p\leq 11$ and $X$ the superspecial $K3$ surface over $k$. Let $G\subset \Aut(X)$. Assume that the symplectic subgroup $G_s$ of $G$ is a maximal group in the given characteristic $p$. Then $G$ is a subgroup of one the following groups, denoted by $G_s.n$ (with another isomorphic description if one is available), where $n$ is the non-symplectic index:
\end{Theorem}  
    \begin{itemize}
        \item [$(p=2)$]$(M_{21}.2_1).3 (\cong \text{PGU}(3,\mathbb F_{2^2}).2)$ of order $ 120,960= 2^7.3^3.5.7$  \\
        $\text{Aut}(\mathfrak S_6).1$ of order $1,440= 2^5.3^2.5$
        \item[$(p=3)$] $ \text{PSU}(4,\mathbb F_{3^2}).4 (\cong \textrm{PGU}(4,\mathbb F_{3^2}))$ of order $13,063,680 = 2^9.3^6.5.7$\\
        $(2^2.\mathfrak A_{4,4}).2$ of order $2,304= 2^8.3^2$ \\
        $(2^4:\mathfrak S_{3,3}).4$ of order $2,304 =2^8.3^2 $ 
        \item[$(p=5)$] $ \text{PSU}(3,\mathbb F_{5^2}) . 6 (\cong \text{PGU}(3,\mathbb F_{5^2}): 2)$ of order $75,600= 2^5.3^3.5^3.7$    \\
        $\mathfrak A_8.1$ of order $20,160 = 2^6.3^2.5.7$\\
        $(2^4:(3\times \mathfrak A_5):2).1$ of order $5,760= 2^7.3^2.5$ 
        \item[$(p=7)$]  $(L_3(4).2).1$ of order $40,320 = 2^7.3^2.5.7$ \\
        $(2^4:\mathfrak A_7).1$ of order $40,320 = 2^7.3^2.5.7$
        \item[$(p=11)$] $M_{22}.1$ of order $443,520 =2^7.3^2.5.7.11 $ \\
        $M_{11}.1$ of order $7,920 = 2^3.3^3.5.11$ \\
        $(2^4:\mathfrak S_{3,3}).1$ of order $576 = 2^6.3^2$
        \end{itemize}
\vspace{8 pt}
\noindent For further reference, we fix the notation of group and lattice theoretic expressions, following the conventions in \parencite{ohashi2024finite}. 
\begin{convention}
    We use the genus symbol from Conway and Sloane \parencite[Chapter 15]{Conway_Sloane} including the refinement from \parencite{Allcock}. Furthermore for finite groups $G,H$ we write:
    \indent $G:H$ - the semi direct product of subgroups $G$, $H$, where $G$ is normal \newline
    \indent $G.H$ - a group $A$ with a normal subgroup $G$ such that $A/G\cong H$ \newline
    \indent $\mathfrak S_{m,n} \coloneqq \mathfrak S_m \times \mathfrak S_n$ \newline 
    \indent $\mathfrak A_{m,n} \coloneqq \mathfrak S_{m,n} \cap \mathfrak A_{m+n}$ \newline 
    \indent $n^e \coloneqq (\mathbb Z/n\mathbb Z)^e$
    \newline 
    \indent $D_n$ - the dihedral group of order $2n$\\
All $K3$ surfaces we will consider will be over algebraically closed fields.
\end{convention}

\noindent\textbf{Acknowledgments.} The author would like to thank their advisor Matthias Schütt for suggesting this project, many important discussions and feedback, Marie Roth for a very helpful discussion on Clifford's theorem, Stevell Muller for important input regarding lattice theory and feedback as well as Dominique Mattei and Simon Brandhorst for detailed comments. This work was funded by the Deutsche Forschungsgemeinschaft (DFG, German Research Foundation) — RTG2965 — Project number 512730679.\\
  \begin{center}      \includegraphics[width=0.75\textwidth]{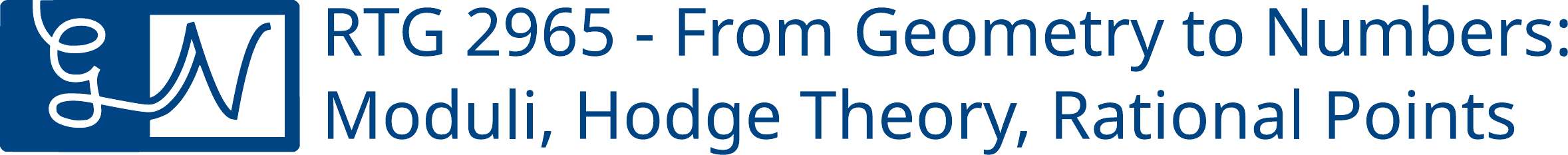}
  \end{center}

\tableofcontents

\section{\texorpdfstring{Preliminaries}{Preliminaries}}
\subsection{Lattice theory}
\subsubsection{Definitions}
Let $L$ be an integral lattice, i.e.\ a finitely generated free $\mathbb Z$-module with a nondegenerate, symmetric bilinear form $\bilemp\ L \times L \rightarrow \mathbb Z$. Unless specified, all lattices $L$ are assumed to be integral and\textit{ even}, i.e.\ $\bil{x}{x}\in 2 \mathbb Z$ for every $x \in L$. The \textit{signature} $(n_+,n_-)$ of $L$ is given by the number of positive and negative eigenvalues of a Gram matrix of $L$. When rescaling the bilinear form $\bil{x}{x}$ of $L$ with a non-zero integer $a$, we obtain a new lattice that we denote as $L[a]$. Note that $L[a]$ may have a different signature if $a$ is negative.\\
A $\mathbb Z$-module automorphism of $L$ that respects the bilinear form is called an \textit{isometry}. The group of isometries of $L$ is denoted by $O(L)$.\\
The \textit{dual lattice} $\dul{L}$ is defined as 
\[
\dul{L} \coloneqq \{x \in L\otimes \mathbb Q \ \vert \ \bil{x}{L}\subseteq \mathbb Z   \}.
\]
The dual lattice is not necessarily integral as the bilinear form does not need to map to the integers. A lattice is called \textit{unimodular} if $L^\vee \cong L$. Since $L^\vee$ contains $L$, we may consider the quotient group
\[
 \discgr{L} \coloneqq \faktor{\dul{L}}{L}.
\]
We call the finite abelian group $A_L$ the \textit{discriminant group} of $L$. The order of $A_L$ is equal to the absolute value of the determinant of a Gram matrix of $L$. We denote the determinant of a Gram matrix of $L$ as $\det(L)$. We define the \textit{length} $l(A_L)$ of $L$ as the smallest number of generators of $\discgr{L}$. The length $l(A_L)$ is always bounded by the rank of $L$. Furthermore, for any prime $p$ we can define $l_p(A_L)$ to be the smallest number of generators of the Sylow $p$-subgroup $(\discgr{L})_p$ of the discriminant group.
The natural map
\[
 \phi\colon  \dul{L} \rightarrow \discgr{L}
\]
sends every element $x \in \dul L$ to its associated equivalence class $\Bar{x}$ in $\discgr L$. 
The bilinear form $\bilemp$ extends to $L^\vee$ and induces a quadratic form $q_L\colon\discgr{L}  \rightarrow \mathbb Q /2\mathbb Z$, $q_L(z)\coloneqq (z.z)\mod 2\mathbb Z$. 
We say that $f\colon \discgr{L} \rightarrow \discgr{L}    $ is an \textit{automorphism} of the discriminant group if $f$ is an automorphism of groups and preserves the quadratic form $q_L$. Every isometry $g$ of $L$ induces an automorphism of $A_L$, that we will denote as $g_A$.\\
Let $M$ be a sublattice of $L$. We say that $M$ is \textit{primitive} in $L$ if $L/M$ is torsion free. For $x_1,\dots,x_l\in L$ we denote the sublattice spanned by the $x_i$ as $\langle x_1\dots x_l\rangle$. This sublattice is not necessarily primitive.\\
If $M = \langle x \rangle$ for $x \in L$ is primitive in $L$, we say that $x$ is primitive in $L$. 
\begin{Definition}
    We say that an element $x$ of the lattice $L$ is \textit{dually primitive}, if $x$ is primitive in $\dul{L}$ or, equivalently, $\bil{x}{L}= \mathbb Z$. This property is also sometimes called being \textit{1-divisible}.
\end{Definition}
\begin{Example}
A simple example is given by considering the lattice defined by the Gram matrix 
\[
\begin{pmatrix}
    2 & 0 \\ 0 & -2
\end{pmatrix}.
\]
Then the two generators $x,y$ are primitive in the lattice, but not dually primitive since $\frac{1}{2} x$ and $\frac{1}{2}y$ are the generators of the dual lattice.\\
On the other hand, if we consider the lattice defined by 
\[\begin{pmatrix}
    2 & 0 \\ 0 & -1
\end{pmatrix}
,\] the second generator is dually primitive.
\end{Example}
 \noindent For two lattices $L_1$ and $L_2$, the \textit{orthogonal direct sum} is written as $L_1\oplus L_2$.
For a sublattice $L_1 \subset L$, the \textit{orthogonal complement} of $L_1$ in $L$ is written as
\[
L_1^\perp \coloneqq \{x \in L \mid (x.y)=0 \ \text{for every } y\in L_1   \}.
\]
Since we assume that $L_1$ is a lattice, and thus nondegenerate, the orthogonal direct sum of $L_1$ and $L_1^\perp$ has the same rank as $L$ and $L/(L_1\oplus L_1^\perp)$ is finite.
\subsubsection{Primitive extensions of lattices}
Given two lattices $L_1$ and $L_2$, we are interested in lattices $L\supset L_1\oplus L_2$ such that both $L_i$ are primitive in $L$ and $\rk(L) =\rk(L_1)+\rk(L_2)$. Such a lattice is called a \textit{primitive extension} of $L_1$ and $L_2$ (see \parencite{Nikulin_1980} or \parencite[section 2.2]{Brandhorst_Hofmann_2023}). 
Primitive extensions can be identified in the discriminant group in the following way. There is a natural sequence of embeddings
\[
L_1 \oplus L_2 \hookrightarrow L \hookrightarrow L^\vee \hookrightarrow (L_1\oplus L_2)^\vee \stackrel{(1)}{\cong} L_1^\vee \oplus L_2^\vee.
\]
This implies that we may identify $L$ with the subgroup $L/(L_1\oplus L_2)\subset A_{L_1\oplus L_2}$. Since we take $L$ to be an even lattice, the quadratic form $q_{L_1\oplus L_2}$ is trivial along the subgroup $L/(L_1\oplus L_2)$.\\
Due to the isomorphism (1), we can project $L/(L_1\oplus L_2)$ into $A_{L_1}$ and $A_{L_2}$ respectively. These maps are injective since the $L_i$ are assumed to be primitive in $L$. We denote the images of the projections by $H_{L_i}\subset A_{L_i}$.
There are group isomorphisms
\[
\begin{tikzcd}
H_{L_1} \arrow[rr, "g", bend right, shift right=2] \arrow[r, "\cong"] & L/(L_1\oplus L_2) \arrow[r, "\cong"] & H_{L_2}
\end{tikzcd}
\]
The map $g $ is also called the \textit{gluing map}. Then, since we know that the quadratic form on $L/(L_1 \oplus L_2)$ is isotropic, we have that $q_{L_1}(x)= -q_{L_2}(g(x))$ for every $x\in A_{L_1}$. Up to sign, the determinant of $L$ can be calculated in the following way
\[
\vert\det(L)\vert = \vert A_{L_1}/H_{L_1}\vert \cdot \vert A_{L_2}/H_{L_2}\vert .
\]
Consider the lattices $L_1$ and $L_2$ such that there exists a primitive extension $L$. We are interested in \textit{extending} an isometry $f_1\in O(L_1)$ to an isometry of $L$, i.e.\ an isometry $f \in O(L)$ such that $f_{\vert L_1} = f_1$.\\
An isometry $f_1\in O(L_1)$ can be extended to a (unique) isometry $f \in O(L)$ if and only if there exists an isometry $f_2\in O(L_2)$ such that $g((f_1)_A\vert_{H_1}) = (f_2)_A\vert_{H_2} $. \\

\subsubsection{The genus symbol}
Let $L_1$ and $L_2$ be two lattices. We say that $L_1$ and $L_2$ lie in the same \textit{genus} when $L_1\otimes \mathbb R \cong L_2\otimes \mathbb R$ and $L_1\otimes \mathbb Z_q \cong L_2\otimes \mathbb Z_q$ for every prime $q$. Here, $\mathbb Z_q$ is the ring of $q$-adic integers. The condition $L_1\otimes \mathbb R \cong L_2\otimes \mathbb R$ is equivalent to $L_1$ and $L_2$ having the same signature.\\
It is clear that two isomorphic lattices $L_1$ and $L_2$ lie in the same genus. But in general, the converse does not hold. \\
Let $L$ be a lattice of rank $n$ with signature $(n_+,n_-)$ and determinant $d$. The \textit{genus symbol} is a shorthand notation that encodes the behaviour of $L\otimes\mathbb Z_q$ for every prime $q$ and the signature of $L$. Very importantly, this information is equivalent to describing the quadratic form $q_L$ of the discriminant group of $L$ and the signature of $L$. The genus symbol was first introduced by Conway and Sloane \parencite{Conway_Sloane} and we briefly recall its construction.\\
Let $q\geq 3$ be a prime. Then every Gram matrix of $L\otimes \mathbb Z_q$ is diagonalizable. We may write this diagonal matrix as
\[
M = \begin{pmatrix}
    a_1 q^{e_1} & & &\\
    & a_2 q^{e_2} & & \\
    & & \ddots &\\
    & & & a_{n} q^{e_n}
\end{pmatrix}
\]
where $e_i\leq e_{i+1}$ and $a_i\in \mathbb Z_q^*$.  
We may decompose $M$ into the block matrix
\[
M = \bigoplus\limits_{i} q^iM_i
\]
where each $M_i$ is a diagonal matrix with entries $a_{i_1},\dots, a_{i_{n_i}}$ for which $i=e_{i_1}=\dots= e_{i_{n_i}}$. The determinant of $M_i$ depends on the explicit choice of basis. A different choice of basis varies the determinant of $M_i$ by a square in $\mathbb Z_q^*$. The \textit{$q$-adic genus symbol} is written as
\[
    1^{\epsilon_0n_0}q^{\epsilon_1n_1}\cdots \left(q^e \right)^{\epsilon_en_e},
    \]
where $\epsilon_i = +1$ when the determinant of $M_i$ is in $ (\mathbb Z_q^*)^2$ and $\epsilon_i = -1$ otherwise. 
The $q$-adic genus symbol uniquely determines the isometry class of the $q$-adic lattice $L\otimes \mathbb Z_q$.\\
The 2\textit{-adic genus symbol} requires more care since $L\otimes \mathbb Z_2$ is generally not diagonizable. In other words, we may still decompose a Gram matrix $M$ into the block matrix
\[
M = \bigoplus\limits_{i} p^iM_i,
\]
but the blocks $M_i$ will generally not be diagonal.
The definition requires the notion of \textit{oddity}. The oddity $t_i$ of the 2-adic genus at $2^i$ is defined as $\romantwo$ if all diagonal entries are even and $t_i\equiv \text{tr}(M_i)\mod 8$ otherwise. We write the genus symbol at 2 as
\[
    1_{(t_0)}^{\epsilon_0n_0}2_{(t_1)}^{\epsilon_1n_1}\cdots \left(2^e \right)_{(t_e)}^{\epsilon_en_e},
    \]
where $\epsilon =+1$ if $\det(M_i)\equiv 1,7\mod 8$ and $\epsilon = -1$ if $\det(M_i)\equiv 3,5\mod 8$. The genus symbol at 2 is not unique due to the occurrence of sign walking and oddity fusion. More work on this is found in \parencite{Conway_Sloane} as well as \parencite{Allcock}.\\
The \textit{genus symbol} of an even lattice $L$ with determinant $d$ and signature $(n_+,n_-)$ is written as 
\[
\romantwo_{n_+,n_-}(G_q)_{q \text{ prime }},
\]
where $G_q$ is the $q$-adic genus symbol of $L$. For readability, we may omit the parts $1^{\epsilon_0}_{n_0}$ for each $q$, which implies that for every prime $q\nmid d$ the genus symbol at $q$ can be omitted from our notation. \\
Furthermore, when the signature and rank of the given lattice is clear, we will omit $\romantwo_{n_+,n_-}$ from the genus symbol.\\
The relation of the genus symbols of a lattice and the respective negated lattice is noted in \parencite[Lemma 4.3]{ohashi2024finite}.
\begin{Lemma}
    Let $q$ be a prime and $L$ be an lattice whose $q$-adic symbol is of the form 
    \[
    1_{(t_0)}^{\epsilon_0n_0}q_{(t_1)}^{\epsilon_1n_1}\cdots \left(q^e \right)_{(t_e)}^{\epsilon_en_e}
    \]
    where $t_i \in \mathbb Z/8\mathbb Z \cup \{ \romantwo\}$ appears only when $q=2$. Then a $q$-adic symbol of the negated lattice $L[-1]$ is as follows
    \[
    1_{(t_0')}^{\epsilon_0'n_0}q_{(t_1')}^{\epsilon_1'n_1}\cdots \left(q^e \right)_{(t_e')}^{\epsilon_e'n_e}
    \]
    where $\epsilon_i' = \left( \frac{-1}{q}\right)^{n_i}\epsilon_i$ $(q\geq 3)$ and $\epsilon_i' = \epsilon_i $, $t_i' = -t_i$ $(t_i \in \mZ/8\mZ)$, $t_i' = \romantwo$ $(t_i = \romantwo)$ $(q=2)$.
\end{Lemma}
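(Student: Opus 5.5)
The approach is to work with one diagonalized Gram matrix of $L\otimes\mathbb Z_q$ and track how each block changes when the form is multiplied by $-1$. Fix a Jordan decomposition $L\otimes\mathbb Z_q\cong\bigoplus_i q^iM_i$, with the $M_i$ unimodular over $\mathbb Z_q$. Rescaling by $-1$ gives the decomposition $L[-1]\otimes\mathbb Z_q\cong\bigoplus_i q^i(-M_i)$. Since $q^i(-M_i)$ has the same scale $q^i$ and the same rank $n_i$, this is again a Jordan decomposition, so it suffices to compute the invariants $\epsilon_i'$ and $t_i'$ of each block $-M_i$. The exponents $q^i$ and the multiplicities $n_i$ stay the same.

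For $q\geq 3$ I take $M_i$ diagonal with entries $a_1,\dots,a_{n_i}\in\mathbb Z_q^*$. Then $\det(-M_i)=(-1)^{n_i}\det(M_i)$. The sign $\epsilon$ is the Legendre symbol of the determinant (reduced mod $q$), so it is multiplicative. Hence $\epsilon_i'=\left(\frac{-1}{q}\right)^{n_i}\epsilon_i$.

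For $q=2$ I treat the determinant and the oddity separately. The determinant again changes by $(-1)^{n_i}$. The character $\epsilon$ depends only on whether $\det\equiv\pm1$ or $\pm3\bmod 8$, so multiplying by $-1$ does not change it, giving $\epsilon_i'=\epsilon_i$. For the oddity:
\begin{itemize}
\item If $M_i$ is even (type $\romantwo$), then $-M_i$ has all diagonal entries even as well, so $t_i'=\romantwo$.
\item If $M_i$ is odd, it can be diagonalized over $\mathbb Z_2$, and the trace of $-M_i$ is $-\operatorname{tr}(M_i)$, so $t_i'\equiv -t_i\bmod 8$.
\end{itemize}

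The main point needing care is that $2$-adic symbols are not unique because of sign walking and oddity fusion. The statement only claims that the displayed expression is \emph{a} symbol of $L[-1]$. This is why the argument computes it directly from the negated decomposition, which is a valid Jordan decomposition of $L[-1]$. With that observation, the lemma follows, and it agrees with \parencite[Lemma 4.3]{ohashi2024finite}.
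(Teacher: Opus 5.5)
Your proof is correct. Negating a Jordan decomposition of $L\otimes\mathbb Z_q$ block by block gives a Jordan decomposition of $L[-1]\otimes\mathbb Z_q$. Under negation each block's determinant picks up the factor $(-1)^{n_i}$, which the $2$-adic character ($\pm1$ versus $\pm3 \bmod 8$) cannot detect, and the trace of a diagonalization changes sign. Your remark that only \emph{a} symbol is claimed correctly handles the sign-walking and oddity-fusion ambiguity.

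The paper gives no argument of its own here; it simply quotes the statement from Ohashi--Sch\"utt [Lemma 4.3]. Your direct blockwise computation is the standard verification.
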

\subsubsection{Existence of lattices}
\noindent In this work it is necessary to ask for the existence of an even lattice $L$ of a given rank, signature and such that $q_L$ is in a specific isometry class of torsion quadratic forms. \\
Let $L$ be an even lattice with signature $(t_{(+)},t_{(-)})$ and $q_\discgr{L}$ the quadratic form on the discriminant group of $L$. 
We define the \textit{signature mod 8} as
\[
\sign\ q_{L} \coloneqq t_{(+)} -t_{(-)} \mod 8.
\]
The signature mod 8 of the quadratic form on the discriminant group is well-defined, i.e.\ a different lattice with isomorphic discriminant groups will produce the same signature \parencite[Theorem 1.3.1, 1.3.3]{Nikulin_1980}.
We briefly recall an important result on the existence of certain lattices from the work of Nikulin \parencite{Nikulin_1980}.
\begin{Theorem}\parencite[Theorem 1.10.1]{Nikulin_1980}\label{NikulinExistenceResult} An even lattice with invariants $(t_{(+)},t_{(-)},q)$ exists if and only if the following conditions are satisfied:
\begin{itemize}
    \item[1)] $t_{(+)}-t_{(-)} \equiv \sign\ q\mod 8$
    \item[2)] $t_{(+)}\geq 0,t_{(-)}\geq 0$, $t_{(+)}+t_{(-)} \geq l(A_q)$
    \item[3)] $(-1)^{t_{(-)}} \vert A_q \vert \equiv \det  K(q_p) \mod {(\mathbb Z_p^*)^2}$ for all odd primes with $t_{(+)}+t_{(-)}=l_p(A_q)$.
    \item[4)] $\vert A_q \vert \equiv \pm \det K(q_2) \mod {(\mathbb Z_2^*)^2}$ with $t_{(+)}+t_{(-)}=l_p(A_2)$ and $q_2\neq q_\theta^{(2)}(2) \oplus q'_2$.
\end{itemize}
\end{Theorem}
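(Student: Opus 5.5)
This is the cited result of Nikulin, and I would reprove it by the standard local--global method: an even lattice is equivalent to a compatible family of local lattices $L_p$ over $\mathbb Z_p$ plus a real form of signature $(t_{(+)},t_{(-)})$, glued by a global rational quadratic space. For a prime $p\nmid \vert A_q\vert$ the local lattice must be unimodular, and for $p\mid\vert A_q\vert$ it must have discriminant form $q_p$. So the statement reduces to three tasks: existence of the local pieces, existence of a rational quadratic space containing all of them, and the passage from that space to an actual $\mathbb Z$-lattice.

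\emph{Necessity.} Condition 1) is the Milgram (van der Blij) formula: the Gauss sum of $q_L$ equals $e^{2\pi i\,\sign/8}$ times $\sqrt{\vert A_q\vert}$. Condition 2) holds because $A_L$ is a quotient of $\dul{L}\cong\mathbb Z^{\rk L}$. For 3) and 4), assume $\rk L=l_p(A_q)$. Then $L\otimes\mathbb Z_p$ has no unimodular Jordan component, so $L\otimes\mathbb Z_p$ is the lattice $K(q_p)$ rescaled, and it is determined by $q_p$. Comparing $\det L=(-1)^{t_{(-)}}\vert A_q\vert$ with $\det K(q_p)$ modulo squares of units gives 3) and 4). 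The exclusion of $q_\theta^{(2)}(2)$ in 4) marks the case in which the $2$-adic lattice is not determined by the form alone.

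\emph{Sufficiency.} Fix $n=t_{(+)}+t_{(-)}$ and $d=(-1)^{t_{(-)}}\vert A_q\vert$.
\begin{enumerate}
\item For each $p$, build $L_p$ as $K(q_p)\oplus U_p$, where $U_p$ is an even unimodular $\mathbb Z_p$-lattice of rank $n-l_p(A_q)$. Choose its determinant so that $\det L_p\equiv d$ modulo $(\mathbb Z_p^*)^2$. When the rank of $U_p$ is positive this choice is free (for $p=2$ one uses $U$ or $V$ in even rank, and the parity of $n$ is handled by condition 1)). When the rank is zero, the choice is exactly forced by 3) and 4).
\item Every $L_p\otimes\mathbb Q_p$ now has determinant $d$ and a definite Hasse invariant $c_p$. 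For $p\nmid 2\vert A_q\vert$ the lattice is unimodular and $c_p=1$.
\item By the Hasse--Minkowski existence theorem, a rational quadratic space $V$ with these local invariants and real signature $(t_{(+)},t_{(-)})$ exists if and only if $\prod_{p\le\infty}c_p=1$. (For $n\le 2$ one needs the usual additional representability conditions.)
\item Take a lattice $M\subset V$ and replace $M\otimes\mathbb Z_p$ by $L_p$ at the finitely many bad primes. By the local--global correspondence for lattices in a fixed space, this yields an even $\mathbb Z$-lattice $L$ with $q_L\cong q$.
\end{enumerate}

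\emph{Main obstacle.} The hard step is verifying the product formula $\prod c_p=1$. The plan is to express each $c_p$ through the local Gauss sums of $q_p$ (the oddity and $p$-excess formulas of Conway--Sloane). Then $\prod c_p=1$ becomes precisely the oddity formula, which is equivalent to condition 1) given the determinant matching. The $2$-adic bookkeeping, including the exceptional $q_\theta^{(2)}(2)$ case and the low-rank cases, is where I expect the real work to lie.
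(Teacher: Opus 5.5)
Your overall architecture matches Nikulin's proof. The paper itself only cites it, but Nikulin's argument has the same local--global shape: local lattices with discriminant form $q_p$ and determinant $d$, a rational space via Hasse--Minkowski, and the identification of the Hasse product formula with condition 1) through Gauss sums. However, Step 1 makes a claim that fails at $p=2$. That is exactly the point where condition 1) must already be used locally, and not only in the product formula of Step 3.

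\textbf{The gap at $p=2$.} An even unimodular $\mathbb Z_2$-lattice of rank $2m$ is $U^{\oplus m}$ or $U^{\oplus(m-1)}\oplus V$. Its determinant is $(-1)^m$ or $5\cdot(-1)^m$ modulo $(\mathbb Z_2^*)^2$. So the unimodular summand only lets you multiply $\det L_2$ by $5$; the residue modulo $4$ of its unit part is not free.

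Likewise, when $n=l_2(A_q)$, condition 4) fixes $\det K(q_2)$ only up to sign. In the exceptional case $q_2=q_\theta^{(2)}(2)\oplus q_2'$ it says nothing at all: the two choices $\langle 2\theta\rangle$ and $\langle 2(\theta+4)\rangle$ have the same discriminant form and again differ only by the factor $5$. So both ``the choice is free'' and ``exactly forced by 3) and 4)'' are wrong at $2$. Without $\det L_2\equiv d$ in $\mathbb Q_2^*/(\mathbb Q_2^*)^2$ you have no local space to feed into Step 3.

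\textbf{How to close it.} Write $d=2^k u$, $\det K(q_2)=2^k v$ and $m=(n-l_2(A_q))/2$; this is an integer by the parity consequence of 1) that you noted. You need $u\equiv(-1)^m v \pmod 4$. Reduce the oddity and $p$-excess formulas modulo $4$:
\begin{itemize}
\item $\sign q_2\equiv l_2(A_q)+1-v \pmod 4$;
\item $\sign q_p\equiv 1-\vert A_{q_p}\vert \pmod 4$ for odd $p$.
\end{itemize}
With these, condition 1) read modulo $4$ is equivalent to exactly the required congruence. This is why 4) only carries a $\pm$; necessity in fact gives 4) with the sign $(-1)^{t_{(-)}}$ built in. Once the sign is matched, use the factor $5$ (or condition 4)) to fix the remaining square class. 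Only then does your Step 3 apply, with condition 1) modulo $8$ giving $\prod_p c_p=1$.

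\textbf{Two smaller points.}
\begin{itemize}
\item In the necessity part, $L\otimes\mathbb Z_p$ is isometric to $K(q_p)$, not a rescaling of it. This uses the uniqueness of $K(q_p)$ outside the exceptional case.
\item The expression of $c_p$ through local Gauss sums, which you only announce, is the real content of the theorem. It is Weil's reciprocity, or equivalently the Conway--Sloane equivalence between the oddity formula and the Hilbert product formula.
\end{itemize}
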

\begin{Remark}
    The term $K(q_p)$ denotes a $\mathbb Z_p$ lattice that has the quadratic form $q_p$ as its discriminant group. The group $A_q$ is the discriminant group associated to the quadratic form $q$. The original statement in \parencite{Nikulin_1980} uses the notation $\text{discr}(L)$ for the determinant.
\end{Remark}
\subsection{\texorpdfstring{$K3$ surfaces}{K3 surfaces}}\label{K3s}
Let $X$ be a $K3$ surface over an algebraically closed field $k$.
If the rank of the Néron-Severi group $\textrm{NS}(X)$ is 22, which is equal to the second Betti number, $X$ is called \textit{supersingular}. Supersingular $K3$ surfaces only occur in positive characteristic. From now on, assume that $X$ is supersingular.\\
Endowed with the intersection product, we can consider $\NS$ as an even lattice. By the Hodge index theorem, $\NS$ has signature (1,21). The discriminant group is 
\[
\discg \cong\left( \faktor{\mathbb Z}{p \mathbb Z}\right)^{2\sigma}, 
\]
where $\sigma\in \{1,2,\dots,10  \}$ as is shown in \parencite{ArtinSupSing}.
The positive integer $\sigma$ is called the \textit{Artin invariant} of $X$. One can show that in the case where $\sigma = 1$, the quadratic form on $A_\NS$ is \textit{anisotropic}, i.e.\ that nontrivial elements $x\in A_\NS$ have $q_\NS(x)\neq 0$, by reconstructing the quadratic form on the discriminant group from the genus symbol. \\ 
For a fixed characteristic $p$, the supersingular $K3$ surface of Artin invariant 1 is unique up to isomorphism (\parencite{OgusK3Cryst} for $p\geq 3$, \parencite{Char2Dolgachev} and \parencite{RudakovShafarevich} for $p=2$). The supersingular $K3$ surface of Artin invariant 1 is sometimes called the \textit{superspecial} $K3$ surface. 
The genus of $\NS$ for the superspecial $K3$ surface is given by
\begin{align*}
    &\textrm{II}_{1,21}p^{\epsilon2},  \text{ for } p\geq 3 \text{ and with } \epsilon = -\left ( \frac{-1}{p}\right)\\
    &\textrm{II}_{1,21}2^{-2}_{\romantwo},  \text{ for } p = 2.
\end{align*}
The definition of $\epsilon$ uses the Legendre symbol.\\
An automorphism $g$ of $X$ acts on $\NS$ via the pullback of line bundles.
If the induced action on $H^0(X,\Omega^2_X)$ is the identity, $g$ is called \textit{symplectic}. Otherwise, $g$ is \textit{non-symplectic}.
We say that $g$ is \textit{wild} if $p \mid \text{ord}(g)$ and \textit{tame} if $\text{ord}(g)$ is coprime to $p$. A wild automorphism whose order is a prime power is always symplectic. Wild automorphisms only appear in characteristic $\leq 11$ \parencite[Theorem 2.1]{DolgachevKeum2009}. We say that a finite group $G\subset \Aut(X)$ is wild if $p\mid\vert G\vert $, i.e.\ if $G$ contains a wild automorphism.\\ 
In \parencite{Ogus1983}, an analog of the Torelli-type theorem for complex $K3$ surfaces is introduced for supersingular $K3$ surfaces in characteristic $p\geq5$. For our purpose, we follow the introduction to characteristic subspaces from \parencite{JangIndex} while omitting some details.\\
The discriminant group $\discg$ is an $\mathbb F_p$-vector space of dimension $2\sigma$. We extend the Frobenius endomorphism $F_k$ to $\discg$ by
\[
f \coloneqq \id \otimes F_k\colon \discg \otimes k \rightarrow \discg \otimes k.
\]
We say that a $\sigma$-dimensional totally isotropic subspace $\mathcal K$ of $\discg \otimes k$ is a \textit{characteristic subspace} if $\mathcal{K} + f(\mathcal{K})$ is of dimension $\sigma +1$. A characteristic subspace is called \textit{strictly} characteristic if $\mathcal K \cap \discg = \{0\}$ in $\discg\otimes k$.  \\

For the superspecial $K3$ surface, $\discg$ contains exactly two strictly characteristic subspaces $C_1$ and $C_2$, such that $f(C_1)=C_2$.
\begin{Theorem}[Crystalline Torelli Theorem for superspecial $K3$ surfaces,\parencite{Ogus1983}]\label{crystallinetorelli}
    Let $X$ be the superspecial $K3$ surface in characteristic $p\geq 5$. 
 Then an isometry $g\in O(\NS)$ is induced by a (unique) automorphism $\alpha \in \Aut(X)$ if and only if $g\otimes \mathbb R$ preserves the ample cone and $g_A\otimes k$ preserves both strictly characteristic subspaces.
\end{Theorem}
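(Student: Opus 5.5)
We would not reprove Ogus' theorem from scratch. Instead we plan to deduce this superspecial formulation from the general crystalline Torelli theorem of \parencite{Ogus1983}, which we take as given. For a supersingular $K3$ surface $X$ in characteristic $p\geq 5$, that theorem attaches to $X$ a \emph{period}: a strictly characteristic subspace $\mathcal K_X\subset \discg\otimes k$, namely the image of the kernel of $\NS\otimes k\to H^2_{dR}(X/k)$. It states that $g\in O(\NS)$ is induced by an automorphism if and only if $g\otimes\mathbb R$ preserves the ample cone and $g_A\otimes k$ preserves $\mathcal K_X$. Uniqueness of the automorphism comes from the faithfulness of the action of $\Aut(X)$ on $\NS$, also proved by Ogus (and by Rudakov--Shafarevich). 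So the work is to see that, when $\sigma=1$, the condition ``preserves $\mathcal K_X$'' is the same as ``preserves both $C_1$ and $C_2$''.

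First we describe the strictly characteristic subspaces for $\sigma=1$. Here $\discg\cong\mathbb F_p^2$, and its quadratic form is anisotropic, as noted in Section \ref{K3s}. A characteristic subspace is then an isotropic line in the $2$-dimensional quadratic space $\discg\otimes k$. Over $k$ a nondegenerate binary form has exactly two isotropic lines. Since the form is anisotropic over $\mathbb F_p$, these lines are not defined over $\mathbb F_p$: they are defined over $\mathbb F_{p^2}$ and are conjugate under Frobenius. In particular each meets $\discg$ only in $0$, so both are strictly characteristic, and $f(C_1)=C_2$, $f(C_2)=C_1$. The line-plus-image condition holds because $C_1\neq C_2$ gives $\dim(C_1+f(C_1))=2=\sigma+1$. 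Thus $\mathcal K_X$ is one of $C_1,C_2$, say $C_1$.

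Next we compare the two conditions. Any $g\in O(\NS)$ induces $g_A$ defined over $\mathbb F_p$, so $g_A\otimes k$ commutes with $f=\id\otimes F_k$. Also $g_A\otimes k$ is an isometry, so it permutes the two isotropic lines $\{C_1,C_2\}$. If it fixes $C_1$, then applying $f$ shows it fixes $f(C_1)=C_2$. Conversely, preserving both lines trivially preserves $C_1=\mathcal K_X$. Hence ``$g_A\otimes k$ preserves $\mathcal K_X$'' is equivalent to ``$g_A\otimes k$ preserves both strictly characteristic subspaces''. Substituting this into Ogus' theorem gives the statement.

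The main obstacle is really the input, Ogus' general theorem, particularly the identification of the period with a strictly characteristic subspace and the hypothesis $p\geq5$. Within our deduction, the only delicate point is to check that the ample-cone condition in Ogus' formulation matches ours. His version is sometimes stated as preserving the positive cone modulo the Weyl group of $(-2)$-vectors; we would first reduce to that form and note that preserving the ample cone is exactly the extra chamber condition.
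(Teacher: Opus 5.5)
Your proposal is correct and matches the paper's approach: the paper gives no proof of its own and simply attributes the statement to Ogus, relying on the remark just before it that $A_{\mathrm{NS}(X)}\otimes k$ has exactly two strictly characteristic subspaces $C_1, C_2$ with $f(C_1)=C_2$. Your argument that $g_A$ is defined over $\mathbb F_p$, so $g_A\otimes k$ commutes with $f$, and hence preserving the period $\mathcal K_X\in\{C_1,C_2\}$ is equivalent to preserving both lines, makes explicit the translation from Ogus' formulation that the paper leaves implicit.
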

\noindent The crystalline Torelli Theorem gives an explicit way to think about automorphisms in a lattice-theoretic way. The property that an automorphism is symplectic can also be detected in this setting.
\begin{Theorem}\parencite[Theorem 3.1]{ohashi2024finite}\label{critsymp}
    Let $X$ be a supersingular $K3$ surface. Then an automorphism $g$ is symplectic if and only if the induced action on $A_\NS$ is trivial.
\end{Theorem}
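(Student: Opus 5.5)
The plan is to compare the action of $g$ on the one-dimensional space $H^0(X,\Omega^2_X)$ with the action of $g_A$ on $\discg\otimes k$. The bridge is Ogus' description of the crystalline/de Rham cohomology of a supersingular $K3$ surface through its characteristic subspace. We only need that $g^*$ preserves the characteristic subspace $\mathcal K\subset\discg\otimes k$ attached to $X$, not the full crystalline Torelli theorem.

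First, recall from Ogus that the Chern class map $c_1\colon \NS\otimes k\to H^2_{dR}(X/k)$ has kernel of dimension $\sigma$. Its image reduced modulo $\NS\otimes k$ identifies the kernel with a $\sigma$-dimensional totally isotropic subspace $\mathcal K\subset\discg\otimes k$; this is the characteristic subspace of $X$, and it is strictly characteristic. The Hodge filtration gives $F^2=H^0(X,\Omega^2_X)\subset F^1$, where $F^1$ is the orthogonal complement of $F^2$ under cup product. Since $c_1$ is $\Aut(X)$-equivariant, $g_A\otimes k$ preserves $\mathcal K$ and all its Frobenius translates $f^i(\mathcal K)$.

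The key step is to locate $H^0(X,\Omega^2_X)$, up to a Frobenius twist, as an explicit line in $\discg\otimes k$ built from $\mathcal K$. Using $\dim(\mathcal K+f(\mathcal K))=\sigma+1$, I would show that
\[
\ell\coloneqq\bigcap_{i=0}^{\sigma-1} f^{-i}(\mathcal K)
\]
is a line. Equivalently, $\mathcal K=\ell+f(\ell)+\dots+f^{\sigma-1}(\ell)$, with $\ell$ the unique line generating $\mathcal K$ under Frobenius. Via the de Rham comparison, $\ell$ (or $f^{\sigma}(\ell)$) pairs perfectly with $F^2$, or maps onto $H^2(X,\mathcal O_X)=H^2_{dR}/F^1$. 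Hence if $g^*\omega=\lambda\omega$, then $g_A\otimes k$ acts on $\ell$ by $\lambda^{\pm1}$ up to a $p$-power. This identification, tracking exactly how the Hodge line sits inside $\mathcal K$ through the crystalline structure, is the main obstacle; I would take it from Ogus' analysis of K3 crystals of the form $\NS\otimes W\subset H\subset \dul{\NS}\otimes W$.

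Both directions then follow. Because $g_A\otimes k$ commutes with $f$, it acts on $f^i(\ell)$ by $\lambda^{\pm p^i}$. Strict characteristicity, together with the fact that a Frobenius-stable subspace of $\discg\otimes k$ is defined over $\mathbb F_p$, implies that the lines $f^i(\ell)$ for $0\le i\le 2\sigma-1$ span $\discg\otimes k$: their span is $f$-stable, so it descends to $\mathbb F_p$, and it contains $\mathcal K$, so it must be everything. Thus $g_A\otimes k$ is diagonalizable in this spanning set with eigenvalues $\lambda^{\pm p^i}$. If $g$ is symplectic, then $\lambda=1$ and $g_A=\id$. Conversely, if $g_A=\id$, then $g_A$ acts trivially on $\ell$, so $\lambda=1$ and $g$ is symplectic. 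For $p=2$, or whenever the crystalline input differs, I would replace de Rham with crystalline cohomology and run the same eigenvalue argument.
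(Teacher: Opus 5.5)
The paper gives no proof of this statement; it quotes it from Ohashi--Sch\"utt. So I assess your outline on its own terms.

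\textbf{What works.} The linear-algebra half is fine. With $\ell=\bigcap_{i=0}^{\sigma-1}f^{-i}(\mathcal K)$ one does get $\mathcal K=\ell\oplus f(\ell)\oplus\dots\oplus f^{\sigma-1}(\ell)$. The $f$-stable span of the $f^i(\ell)$ is rational and contains $\mathcal K$. Hence its orthogonal is a rational subspace of $\mathcal K^\perp=\mathcal K$, and so is $0$. Therefore the $f^i(\ell)$, $0\le i\le 2\sigma-1$, form an eigenbasis of $g_A\otimes k$ with eigenvalues $\mu^{p^i}$.

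\textbf{The gap.} It sits in the one step that carries all the geometric content: a $g$-equivariant identification of $H^0(X,\Omega^2_X)$ with a Frobenius translate of $\ell$. You defer this step, and the mechanisms you suggest cannot supply it. Chern classes lie in $F^1=(F^2)^\perp$. So for every line $L\subset\discg\otimes k\subset\NS\otimes k$ (via $\discg\cong p\dul{\NS}/p\NS$), the class $c_1(L)$ is cup-orthogonal to $F^2$ and maps to $0$ in $H^2_{dR}/F^1=H^2(X,\mathcal O_X)$. For $\ell\subset\mathcal K=\ker c_1$ one even has $c_1(\ell)=0$. Thus neither $\ell$ nor $f^{\sigma}(\ell)$ pairs with $F^2$ or maps onto $H^2(X,\mathcal O_X)$.

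\textbf{How to fill it.} The missing input is Mazur's description of the Hodge filtration. For $H=H^2_{crys}(X/W)$ with Frobenius $\Phi$, one has $F^2=\{x\in H:\Phi(x)\in p^2H\}\bmod p$; this applies because crystalline cohomology of a K3 surface is torsion free and Hodge--de Rham degenerates. Since $\Phi(c_1(L))=p\,c_1(L)$, one has $\Phi=p(\id\otimes F_W)$ on $\NS\otimes W$. So for $x\in\NS\otimes W$, the condition $\Phi(x)\in p^2H$ is equivalent to $(\id\otimes F_W)(x)\in pH$, i.e.\ to $f(\bar x)\in\mathcal K$. Hence $c_1(f^{-1}(\mathcal K))\subseteq F^2$, with equality because $f^{-1}(\mathcal K)\not\subseteq\mathcal K$.

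Now $f^{-1}(\mathcal K)=f^{-1}(\ell)\oplus(\mathcal K\cap f^{-1}(\mathcal K))$, and the second summand is killed by $c_1$. This gives a $g$-equivariant isomorphism $f^{-1}(\ell)\cong H^0(X,\Omega^2_X)$. So $g^*\omega=\lambda\omega$ means $g_A$ acts on $\ell$ by $\lambda^p$. Your eigenvalue argument then yields both implications, with no finiteness assumption on $g$.

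Your $H^2(X,\mathcal O_X)$ idea can be salvaged in the other direction. The quotient $H^2_{dR}/c_1(\NS\otimes k)\cong H/(\NS\otimes W)\cong\mathcal K$ surjects onto $H^2(X,\mathcal O_X)$, identifying it with $f^{\sigma-1}(\ell)$. Serre duality then recovers Nygaard's relation $\lambda^{p^\sigma+1}=1$.
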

\noindent Let $G\subset \text{Aut}(X)$ be a finite group acting faithfully on $X$ and let $G_s$ be the normal subgroup of symplectic automorphisms. Then Theorem \ref{critsymp} allows us to interpret $G/G_s$ as a subgroup of the automorphism group of $A_\NS$ (i.e.\ group isomorphisms that respect the induced quadratic form).

\section{\texorpdfstring{Actions on supersingular $K3$ surfaces}{Actions on supersingular K3 surfaces}}

\subsection{Known results in positive characteristic}

Let $X$ be a supersingular $K$3 surface of Artin invariant $\sigma$ in characteristic $p>0$.
The following result by Nygaard gave a condition on the non-symplectic index of the full automorphism group.
\begin{Theorem}[\parencite{Nygaard}]\label{NonSympIndex}
    Let $X$ be a supersingular $K3$ surface of Artin invariant $\sigma$ and $\Aut(X)$ the automorphism group of $X$. Then the non-symplectic index $[\Aut(X):\Aut_s(X)]$ divides $p^\sigma +1$.
\end{Theorem}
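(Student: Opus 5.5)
The plan is to reduce the statement to the action of $\Aut(X)$ on the discriminant group $A_{\textsc{NS}(X)} \cong \mathbb F_p^{2\sigma}$ and then use the Frobenius structure coming from crystalline theory. By Theorem \ref{critsymp}, the kernel of the natural homomorphism $\rho\colon \Aut(X)\to O(A_{\textsc{NS}(X)})$, $g\mapsto g_A$, is exactly $\Aut_s(X)$. Hence $\Aut(X)/\Aut_s(X)$ embeds into $O(q_{\textsc{NS}(X)})$, and it suffices to show that the image $\rho(\Aut(X))$ is a cyclic group whose order divides $p^\sigma+1$.

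For the key step, consider the characteristic subspace associated with $X$. By Ogus' crystalline theory, the period of $X$ is a strictly characteristic subspace $\mathcal K\subset A_{\textsc{NS}(X)}\otimes k$ of dimension $\sigma$, and every automorphism $g$ preserves it: $(g_A\otimes k)(\mathcal K)=\mathcal K$. This holds in every characteristic, since it only uses the functoriality of crystalline cohomology and not the Torelli theorem. Strict characteristicity gives the following.
\begin{itemize}
\item The spaces $\mathcal K_i \coloneqq \mathcal K\cap f(\mathcal K)\cap\dots\cap f^{i}(\mathcal K)$ drop in dimension by one at each step.
\item One finds a line $\ell=\mathcal K_{\sigma-1}$ such that $\mathcal K=\ell+f^{-1}(\ell)+\dots+f^{-(\sigma-1)}(\ell)$, and the spaces $f^{j}(\ell)$ for $j=0,\dots,2\sigma-1$ span $A_{\textsc{NS}(X)}\otimes k$.
\item $f^{2\sigma}(\ell)$ is a combination of these.
\end{itemize}
Since $g_A$ is defined over $\mathbb F_p$, it commutes with $f$. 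So $g_A$ preserves $\ell$ and acts on it by a scalar $\lambda(g)\in k^*$. Because $\ell$ generates the whole space under $f$, the scalar $\lambda(g)$ determines $g_A$: on $f^j(\ell)$ the map $g_A$ acts by $\lambda^{p^j}$. Hence $\lambda\colon\rho(\Aut(X))\to k^*$ is injective, and the image is cyclic as a finite subgroup of $k^*$.

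To bound the order, I would use isotropy. The spaces $\ell$ and $f^{\sigma}(\ell)$ pair nontrivially: $\mathcal K$ is totally isotropic of dimension $\sigma$ and strictly characteristic, so the pairing of $\ell$ against the remaining Frobenius twists is forced onto $f^{\sigma}(\ell)$. Since $g_A$ preserves the form, this gives
\[
\lambda\cdot\lambda^{p^\sigma}=1 .
\]
Therefore $\lambda^{p^\sigma+1}=1$, and the order of $\rho(\Aut(X))$ divides $p^\sigma+1$.

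The main obstacle is the linear-algebra step proving that $\ell$ pairs nondegenerately exactly with $f^{\sigma}(\ell)$ and trivially with the other $f^j(\ell)$. This requires careful bookkeeping of the isotropy of $\mathcal K$ together with the nondegeneracy of $q_{\textsc{NS}(X)}$. The cleanest route I would try first is to take $\ell$ to be the unique line in $\bigcap_{j=0}^{\sigma-1} f^{-j}(\mathcal K)$. Then $\ell\perp f^{j}(\ell)$ for $|j|<\sigma$ by isotropy, and nondegeneracy leaves only $f^{\pm\sigma}(\ell)$ to pair with $\ell$. For $p=2$ the quadratic form requires extra care, but the same argument with the bilinear form should go through.
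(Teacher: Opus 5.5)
The paper gives no proof of this statement; it is quoted from Nygaard. Your argument is a correct, self-contained proof, and it makes the mechanism visible where the paper only cites. Via the character $\lambda$ on $\ell=\bigcap_{j=0}^{\sigma-1}f^{-j}(\mathcal K)$, the quotient $\Aut(X)/\Aut_s(X)$ embeds into $\mu_{p^\sigma+1}(k)$. For $\sigma=1$ this matches the paper's two strictly characteristic subspaces $C_1$ and $C_2=f(C_1)$: $g$ acts on them by $\lambda$ and $\lambda^p$, and they pair nontrivially.

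The price is that you rely on Theorem \ref{critsymp} and on Ogus' theory. Functoriality only gives $\Aut(X)$-stability of the period. Strict characteristicity amounts to $\mathrm{NS}(X)\otimes\mathbb Z_p=H^2_{\mathrm{cris}}(X/W)^{\Phi=p}$ and needs a reference, in particular for $p=2$.

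You can avoid Theorem \ref{critsymp} altogether. By Mazur's theorem, $H^0(X,\Omega^2_X)=\mathrm{Fil}^2H^2_{\mathrm{dR}}(X/k)\cong(\mathcal K+f^{-1}\mathcal K)/\mathcal K$ equivariantly. This is a line spanned by the class of $f^{-1}(e)$ for $e$ spanning $\ell$, and $g$ acts on it by $\lambda(g)^{1/p}$. This bounds the image of $\Aut(X)$ in $\GL(H^0(X,\Omega^2_X))$ directly, which is the form in which Nygaard's result is usually stated, and it recovers Theorem \ref{critsymp} along the way.

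Two assertions in your outline should be justified; both follow from strict characteristicity.

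\emph{Spanning.} For $e$ spanning $\ell$, the span $W_m$ of $e,f(e),\dots,f^{m-1}(e)$ grows by one until it becomes $f$-stable. An $f$-stable subspace has the form $V\otimes k$ with $V\subset A_{\mathrm{NS}(X)}$. Since $\mathcal K\cap A_{\mathrm{NS}(X)}=0$, this gives $W_\sigma=\mathcal K$. The stable value $W_\infty$ is the whole space, because $W_\infty^\perp=\bigcap_{j\ge 0}f^j(\mathcal K)$ is an $f$-stable subspace of $\mathcal K$ and hence zero. Consequently any $2\sigma$ consecutive twists $f^j(e)$ form a basis.

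\emph{Pairing.} Your claim that only $f^{\pm\sigma}(\ell)$ can pair with $\ell$ then follows from the basis $f^{1-\sigma}(e),\dots,f^{\sigma}(e)$. By isotropy of $\mathcal K$ and $(f(x).f(y))=(x.y)^p$, every member except $f^{\sigma}(e)$ is orthogonal to $e$. Nondegeneracy therefore forces $(e.f^{\sigma}(e))\neq 0$, which gives $\lambda^{1+p^\sigma}=1$.
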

This result was further improved by Jang in \parencite{JangIndex} for $p\geq 5$ by giving a description of families in the moduli space of supersingular $K3$ surfaces with Artin invariant $\sigma$ such with a specific non-symplectic index. The result most important to our work is that there exists a (unique) supersingular $K3$ surface of Artin invariant $\sigma$, such that the non-symplectic index is equal to $p^\sigma+1$. Since the superspecial $K3$ surface is unique, we can conclude that the non-symplectic index of its automorphism group is $p+1$.\\
Now let $X$ be the superspecial $K3$ surface, $G\subset\Aut(X)$ be a finite group and $G_s\subset G$ be the maximal subgroup, such that every element of $G_s$ acts symplectically on $X$. Then $G$ is an \textit{extension} of the symplectic automorphism groups $G_s$. It is known that $G_s$ is a normal subgroup and, by Theorem \ref{NonSympIndex}, finite. Furthermore, since the symplectic automorphisms are the kernel of the map
\[
\Aut(X)\rightarrow\GL(H^0(X,\Omega_{X}^2))\cong k^\times
\]
we may conclude that the quotient is cyclic. We note further conditions for the non-symplectic index of a finite group $G$. 
\begin{Proposition}\parencite[Prop. 4.1]{KEUM201639}
     Let $X$ be the superspecial $K3$ surface in characteristic $p\geq 5$. Consider a finite group $G\subset \textrm{Aut}(X)$. Then the non-symplectic index $n=\vert G/G_s\vert$ fulfills $\varphi(n) \leq 20 $ and $n \mid p+1 $. Here, $\varphi$ denotes Euler's totient function.
\end{Proposition}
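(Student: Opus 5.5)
The statement to prove is \parencite[Prop.~4.1]{KEUM201639}: for a finite $G\subset\Aut(X)$ on the superspecial $K3$ surface with $p\geq 5$, the non-symplectic index $n=\vert G/G_s\vert$ satisfies $n\mid p+1$ and $\varphi(n)\leq 20$. The plan is to treat the two conditions separately, using the lattice-theoretic picture of $G/G_s$ from Theorem \ref{critsymp}.

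\textbf{Divisibility.} Since $G/G_s$ embeds into $\Aut(X)/\Aut_s(X)$, Theorem \ref{NonSympIndex} with $\sigma=1$ gives $n\mid p+1$ at once. As a more hands-on check, $G/G_s$ is cyclic and, by Theorem \ref{critsymp}, acts faithfully on $A_\NS\cong\mathbb F_p^2$. By Theorem \ref{crystallinetorelli} this action preserves the two strictly characteristic lines $C_1,C_2\subset A_\NS\otimes k$, which are conjugate under $f$. Hence a generator $\bar g$ acts on $C_1$ by a scalar $\lambda$ and on $C_2=f(C_1)$ by $\lambda^p$. Isotropy of $C_1$ and $C_2$, together with nondegeneracy of the form, forces the pairing $C_1\times C_2\to k$ to be perfect. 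Since $\bar g$ preserves the form, $\lambda^{p+1}=1$, so $\operatorname{ord}(\bar g)$ divides $p+1$.

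\textbf{The totient bound.} Pick $g\in G$ mapping to a generator of $G/G_s$, and choose it so that its image has order exactly $n$ while $g$ itself is tame. This is possible by taking a suitable power of $g$, since $p\nmid n$ because $n\mid p+1$. Then $g$ acts on $H^0(X,\Omega^2_X)$ by a primitive $n$-th root of unity $\zeta$. The key input is that this character must appear in the action of $g$ on a $22$-dimensional cohomology with characteristic-$0$ coefficients: crystalline cohomology $H^2_{\mathrm{crys}}(X/W)$, or $\ell$-adic $H^2_{\text{ét}}$ with $\ell\neq p$. This is standard via the Hodge filtration, since $H^0(\Omega^2)$ is a graded piece of $H^2_{\mathrm{dR}}=H^2_{\mathrm{crys}}\otimes k$ for $p\geq3$. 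Alternatively, one can use the Frobenius-twisted comparison $\NS\otimes W\subset H^2_{\mathrm{crys}}$ of finite index; for supersingular $X$ the cycle class map gives $\NS\otimes W\to H^2_{\mathrm{crys}}$ injective with cokernel killed by $p$.

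Because $g$ has finite order prime to $p$, its characteristic polynomial on $\NS\otimes\mathbb Q$ has integer coefficients. If a primitive $n$-th root of unity occurs as an eigenvalue modulo $p$, then, since $p\nmid n$ and reduction is injective on roots of unity of order prime to $p$, some eigenvalue of $g$ on $\NS\otimes\mathbb C$ is a primitive $n$-th root of unity. The cyclotomic polynomial $\Phi_n$ then divides the characteristic polynomial, which has degree $22$. This gives $\varphi(n)\leq 22$.

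\textbf{Sharpening to $20$ (the main obstacle).} The step from $22$ to $20$ needs two invariant directions. The first is an ample class: the average of $h$ over $\langle g\rangle$ gives a $g$-invariant ample class, so $\mathbb Q$ splits off. That leaves $\varphi(n)\leq 21$, and since $\varphi(n)$ is even for $n\geq3$, we get $\varphi(n)\leq 20$. The cases $n\leq 2$ are trivial. So the obstacle is only the existence of the eigenvalue, and I would prove it via the action on $A_\NS\otimes k$. Here $\bar g$ acts with eigenvalues $\lambda,\lambda^p=\lambda^{-1}$, and $\lambda$ is identified, through the crystalline Torelli description, with the scalar of $g$ on $H^0(\Omega^2)$. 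This forces $\lambda$ to be a primitive $n$-th root of unity. Lifting from $A_\NS=\NS^\vee/\NS$ to $\NS^\vee\otimes\mathbb Z_p$ then yields the eigenvalue in characteristic zero, as above.
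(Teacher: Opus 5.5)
Your proof is correct. The paper gives no argument of its own here and simply cites Keum, so the comparison is with the standard cohomological proof, which is essentially your first route. There, $n\mid p+1$ comes from Theorem \ref{NonSympIndex}. The cyclotomic factor $\Phi_n$ divides the characteristic polynomial of $g$ on $\mathrm{NS}(X)$ because the character of $g$ on $H^0(X,\Omega^2_X)=F^2H^2_{\mathrm{dR}}(X/k)$ is the reduction of an eigenvalue on $H^2_{\mathrm{crys}}(X/W)$. The invariant ample class and the parity of $\varphi(n)$ then finish the bound.

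Your preferred variant replaces the crystalline input by tools the paper already has:
\begin{itemize}
\item faithfulness of $G/G_s$ on $A_{\mathrm{NS}(X)}$ (Theorem \ref{critsymp});
\item the diagonal form of $g_A\otimes k$, with eigenvalues $\lambda$ and $\lambda^p$ on $C_1\oplus C_2$ (Theorem \ref{crystallinetorelli});
\item the fact that $A_{\mathrm{NS}(X)}\otimes\overline{\mathbb F}_p$ is a $g$-equivariant quotient of $\mathrm{NS}(X)^\vee\otimes\overline{\mathbb F}_p$, since $p\,\mathrm{NS}(X)^\vee\subset\mathrm{NS}(X)$.
\end{itemize}
By the last point, every eigenvalue there is the reduction of an eigenvalue of $g$ on $\mathrm{NS}(X)\otimes\overline{\mathbb Q}_p$, and it has the same order because $p\nmid\operatorname{ord}(g)$. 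This route is more elementary and gets both conclusions from a single diagonalization. However, it relies on the superspecial structure: a two-dimensional $A_{\mathrm{NS}(X)}$ with two Frobenius-conjugate characteristic lines. The cohomological argument does not depend on the Artin invariant.

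One justification should be changed. You identify $\lambda$ with the scalar by which $g$ acts on $H^0(\Omega^2)$, but this does not follow from Theorem \ref{crystallinetorelli} as stated; it needs Ogus's finer description of the characteristic subspace in terms of the Hodge filtration. It is also unnecessary. By Theorem \ref{critsymp}, $g_A$ has order exactly $n$. Since $g_A$ acts by $\lambda$ on $C_1$ and by $\lambda^p$ on $C_2$, and $x\mapsto x^p$ is injective on $k$, it follows directly that $\operatorname{ord}(\lambda)=n$.
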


\noindent Let $G\subset O(\NS)$ be a finite group. An important object in the study of actions on lattices is the \textit {invariant lattice} (or \textit{fixed point lattice} )
$$\NS^G\coloneqq \left \{ x\in \NS \ \vert \ g(x)=x  \right\}.$$
One can show that $\NS^G$ is nondegenerate.

\noindent We define the \textit{co-invariant lattice} as the orthogonal complement of the invariant lattice
\[
\NS_G \coloneqq (\NS^G)^\perp .
\]

\begin{Remark}\label{eigenvalues} Let $G_s\subset \Aut_s(X)$.
An extension $G$ of $G_s$ must also preserve the decomposition $\NS^{G_s} \oplus \NS_{G_s}$. This can be seen from the fact that since $G_s$ is a normal subgroup, for every $g\in G$, \[g^{-1}G_sg \subset G_s.\]
Note, however, that although an extension $G$ must map $\NS^{G_s}$ to itself, it need not induce the identity on $\NS^{G_s}$.
\end{Remark}
The following Lemma consists of important, well-known results.
\begin{Lemma}\label{usefulProperties}
Let $X$ be the superspecial $K3$ surface.
\begin{itemize}
    \item[a)] There is an embedding $\text{Aut}(X)\hookrightarrow O(\NS)$. 
    \item[b)] Additionally an isometry $g \in O(\NS)$ that is induced by an automorphism is of finite order if and only if it fixes an ample class.
    \item[c)] A group $G \subset O(\NS)$ that is induced by automorphisms is finite if and only if $\NS^G$ contains an ample class.
    \item[d)] For subgroups $H\subset G\subset O(\NS)$, we know that $\NS^G \subset \NS^{H}$.
\end{itemize}
     
\end{Lemma}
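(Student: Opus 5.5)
Parts a) through d) are standard facts, and I would prove them in the order given, since each one feeds into the next.

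\emph{Part a).} I take $g \in \Aut(X)$ acting trivially on $\NS$ and aim to show $g = \id$. For $p \geq 5$ this follows directly from the uniqueness in the Crystalline Torelli Theorem (Theorem \ref{crystallinetorelli}). For general $p$ I would argue geometrically. Since $g$ fixes an ample class $h$, it preserves the projective embedding given by a suitable multiple of $h$, so it is a projective linear transformation of finite order, or possibly an element of a positive-dimensional linear algebraic group. A $K3$ surface has no nonzero global vector fields, so $\Aut(X)$ is discrete and the stabilizer of $h$ is finite. It remains to show this finite group acts faithfully on $\NS$. If $g \neq \id$ acted trivially on the full rank-22 lattice $\NS$, then $g$ would act trivially on $H^2_{\text{cris}}$ (or on $\ell$-adic $H^2$), since $\NS \otimes W$ has finite index there for supersingular $X$. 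I would then rule this out with the Lefschetz fixed point formula in the tame case, and by quoting the known result (Rudakov--Shafarevich, Keum) in the wild case.

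\emph{Part b).} Suppose $g$ has finite order $n$ and $h$ is any ample class. Then $\sum_{i=0}^{n-1} g^i h$ is $g$-invariant, and it is ample because a sum of ample classes is ample. Conversely, suppose $g$ fixes an ample class $h$. Then $g$ preserves the finite-type scheme of polarized automorphisms. Equivalently, at the lattice level, $g$ lies in the stabilizer of $h$ in $O(\NS)$. Since $h^2 > 0$, the lattice $h^\perp$ is negative definite. The stabilizer therefore embeds into $O(h^\perp) \times O(\mathbb Z h)$ up to finite index, and $O(h^\perp)$ is finite. Hence $g$ has finite order.

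\emph{Part c).} The same averaging argument gives one direction: for finite $G$, the sum $\sum_{g \in G} g h$ is an ample class in $\NS^G$. For the other direction, if $G$ fixes an ample $h$, then $G$ lies in the stabilizer of $h$, which is finite by the argument in b).

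\emph{Part d).} This is immediate, since every element fixed by all of $G$ is in particular fixed by all of $H$.

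The main obstacle is the faithfulness in part a) for small $p$, where the Crystalline Torelli Theorem is not available. There I would simply cite the literature rather than reprove it.
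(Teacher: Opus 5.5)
Your proposal is correct and follows the paper's proof: for b) and c) the paper uses the same two steps. It averages an ample class over $g$ (resp.\ over $G$), and conversely notes that an isometry fixing $h$ is determined by its restriction to the negative definite lattice $h^\perp$, whose orthogonal group is finite; d) is immediate from the definition. For a) the paper simply cites Rudakov--Shafarevich (Section 8), so your crystalline-Torelli and Lefschetz detours are not needed, and the tame Lefschetz step as sketched would in any case still need a fixed-point count for nontrivial tame symplectic automorphisms (fewer than $24$ fixed points) to reach a contradiction.
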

\begin{proof}
    The statement a) is shown in \parencite[Section 8]{Rudakov1983SurfacesOT}.\\
    For statement b) consider an ample class $H\in \NS$. Then for an isometry $g \in O(\NS)$ of finite order $n$, the class $\Tilde H=\sum_{i=1}^ng^i(H)$ is fixed by $g$. If $g$ is induced by an automorphism, $\Tilde{H}$ is non-zero since $g$ preserves the ample cone of $X$, so $g^i(H)$ is again contained in the ample cone and $\Tilde H$ is ample.    
    Conversely, assume that an isometry $g$ fixes an ample class. By a) $g$ acts faithfully on the orthogonal complement $H^\perp$, which has signature (0,21) since every ample divisor has positive self-intersection by the Nakai-Moishezon criterion for ampleness. A negative definite lattice always has a finite automorphism group, so $g$ is of finite order.\\
    For statement c), we may adapt the argument for statement b) by considering the class $\sum_{g\in G}g(H)$.\\ 
    Statement d) follows from the definition of the invariant sublattice.
\end{proof}

\subsection{Classification of finite symplectic actions}\label{MatthiasandOhashi}
The finite groups acting faithfully and symplectically on the superspecial $K3$ surface have been classified independently in \parencite{wang2024finitegroupssymplecticautomorphisms} for characteristic $p\neq 2$ and \parencite{ohashi2024finite} for any characteristic. 
\begin{Corollary}\label{Leech}\parencite[Theorem 4.1]{ohashi2024finite}, \parencite[Lemma 3.7]{wang2024finitegroupssymplecticautomorphisms}
Let $X$ be the superspecial $K3$ surface in characteristic $p$ and $\Lambda$ be the Leech lattice, i.e.\ the negative definite unimodular lattice of rank 24 that contains no $-2$ classes and is unique up to isometry. For every finite subgroup $G_s \subset \Aut_s(X)$ there is a subgroup
of the Conway group $G'\subset \textsc{Co}_0 = O(\Lambda)$ such that $G_s\cong G'$ and the rank of $\Lambda^{G'}$ is at least 3. \\
Furthermore, $\Lambda_{G'}\cong \NS_{G_s}$.
\end{Corollary}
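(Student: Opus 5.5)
The plan is to transport the Conway-group argument of Mukai/Kondō to the supersingular setting, with the Leech lattice $\Lambda$ playing the role that the Niemeier lattices play over $\mathbb C$. Let $G_s\subset\Aut_s(X)$ be finite, and set $S=\NS_{G_s}$ and $T=\NS^{G_s}$. By Theorem \ref{critsymp}, $G_s$ acts trivially on $A_{\NS}$. The standard argument then shows that $G_s$ acts trivially on $A_S$: every element of $A_S$ is glued to $A_T$ (on which $G_s$ is trivial) up to the part coming from $A_{\NS}$. Moreover, $S$ is negative definite, because $T$ contains an ample class (Lemma \ref{usefulProperties} c)) and $S\perp T$. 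Finally, $S$ contains no $(-2)$-vectors. Indeed, a $(-2)$-class $r\in S$ would be effective or anti-effective by Riemann--Roch, while $\sum_{g\in G_s} g(r)$ lies in $S\cap T=0$; pairing with the ample $G_s$-invariant class then gives a contradiction.

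The key step is to embed $S$ primitively into $\Lambda$, or more precisely to realize $S$ as $\Lambda_{G'}$. The first task is to construct an even lattice $K$ with signature $(\operatorname{rk}\Lambda-\operatorname{rk}S,\,0)$ reversed to negative definite, rank $24-\operatorname{rk}S$, and $q_K\cong -q_S$. For this I would apply Theorem \ref{NikulinExistenceResult}. The length condition is the delicate point. We have $l(A_S)\le \operatorname{rk}T+2$ since $A_{\NS}\cong(\mathbb Z/p)^2$, and $\operatorname{rk}T\ge 1$; more refined, following the Kondō trick, $l(A_S)\le 22-\operatorname{rk}S$ plus the $p$-part contribution. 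We therefore need $24-\operatorname{rk}S\ge l(A_S)$. Here I would use $\operatorname{rk} S\le 21$ together with the bound $l(A_S)\le \min(\operatorname{rk}S,\ \operatorname{rk}T+2)$, and I expect to need to check parity and the $p$-adic determinant conditions separately when equality holds.

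Given $K$, the glue $S\oplus K\subset L$ along the graph of an anti-isometry $A_S\to A_K$ yields an even unimodular negative definite lattice $L$ of rank 24. Because $G_s$ acts trivially on $A_S$, it extends to $L$ acting trivially on $K$. Then $L$ is a Niemeier lattice. If it is not the Leech lattice, it has roots, and I would run Kondō's argument: $G_s$ preserves the root system, the chamber argument forces $G_s$ into the Weyl group complement, and one modifies $K$ or uses the freedom in choosing $K$ in its genus to replace $L$ by $\Lambda$. The cleaner route, which I would try first, is to note that we can choose $K$ to have rank $\ge 3$ and be flexible enough in its genus so that the glued lattice has no roots. $S$ itself has none, and roots of $L$ orthogonal to neither factor can be excluded by choosing $K$ with large minimum.

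Having $S\subset\Lambda$ primitively with $G_s$ acting trivially on $S^\perp=K$, set $G'$ equal to the image of $G_s$ in $\textsc{Co}_0$. Faithfulness follows from Lemma \ref{usefulProperties} a). Then $\Lambda^{G'}\supseteq K$ and $\Lambda_{G'}=S$, since $S^{G_s}=0$. The rank condition follows because $\operatorname{rk}K=24-\operatorname{rk}S\ge 24-21=3$. The main obstacle is the existence step: verifying the length and $p$-adic conditions of Theorem \ref{NikulinExistenceResult} in the borderline cases, and ensuring that the unimodular overlattice can be taken rootless, i.e.\ equal to $\Lambda$ rather than another Niemeier lattice.
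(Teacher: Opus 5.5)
The paper gives no argument of its own here; it quotes the statement from \parencite[Theorem 4.1]{ohashi2024finite} and \parencite[Lemma 3.7]{wang2024finitegroupssymplecticautomorphisms}.

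Your reduction to a Leech pair is the right input and is fine. $G_s$ acts trivially on $A_S$: lift $x\in S^\vee$ to $y\in \NS^\vee$, then $gx-x=gy-y\in \NS\cap T^\perp=S$. $S$ is negative definite and rootless. Since $A_S/H_S$ is a quotient of $A_{\NS}\cong(\mathbb Z/p\mathbb Z)^2$, you get $l(A_S)\le \rk T+2=24-\rk S$. Your last paragraph is also fine once $S$ sits primitively in $\Lambda$ with $G_s$ trivial on $S^\perp$. But that is exactly the step you do not carry out.

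A Niemeier lattice $L\supset S$ is not enough, and neither of your remedies works. The genus of $K$ is fixed by its signature and $q_K=-q_S$ and has finitely many classes, so you cannot choose $K$ ``with large minimum''; roots of $L$ lying inside $K$ would also have to be excluded. Whether some class in that genus glues to a rootless lattice is exactly the claim. Kond\=o's argument only shows that $G_s$ stabilizes a Weyl chamber of $L$; it never changes $L$.

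The missing idea is Lorentzian:
\begin{itemize}
\item No root of $L$ lies in $K^\perp=S$, so a generic vector of $K\otimes\mathbb R$ is regular. Hence $G_s$ fixes the Weyl vector $\rho\in L^{G_s}=K$ of its chamber.
\item Therefore $G_s$ fixes the isotropic vector $w=\rho+he+(h+1)f$ in $L\oplus U\cong \mathrm{II}_{1,25}$, where $U=\langle e,f\rangle$ and $h$ is the Coxeter number of $L$.
\item By the Lorentzian (``holy'') construction of Conway and Sloane, $w^\perp/\mathbb Z w\cong\Lambda$.
\item Since $(w,f)=h$ and $(w,e)=h+1$ are coprime, $w$ has divisibility $1$ in $S^\perp=K\oplus U$.
\item So $S$ maps isometrically and primitively into $\Lambda$. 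Its orthogonal complement $(w^\perp\cap(K\oplus U))/\mathbb Z w$ has rank $\rk K$, and $G_s$ acts trivially on it.
\end{itemize}
Taking an arbitrary $G_s$-fixed Weyl vector of $\mathrm{II}_{1,25}$ in $S^\perp$ would not suffice. It only realizes $\Lambda_{G'}$ as an overlattice of $S$, of index equal to the divisibility of that vector.

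Second, the borderline case you postpone cannot be settled by checking, because the check fails. If $l(A_S)<24-\rk S$, Theorem \ref{NikulinExistenceResult} gives your $K$ with no further conditions. Equality is impossible at primes $q\neq p$, where $l_q(A_S)=l_q(A_T)\le \rk T$. At $p$, equality forces $l_p(A_T)=\rk T$ and $(A_T)_p$ to be glued entirely. Then the $p$-part of $q_S$ is $(-q_T)_p\oplus (q_{\NS})_p$, and $|A_S|=p^2|A_T|$. Consequently $K\otimes\mathbb Z_p$ would have to be $(T\otimes\mathbb Z_p)\oplus V[-p]$, where $V[p]$ is the non-unimodular constituent of $\NS\otimes\mathbb Z_p$. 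For odd $p$, condition 3) of Theorem \ref{NikulinExistenceResult} then reduces to $-\det V$ being a square mod $p$, i.e.\ to $q_{\NS}$ being isotropic. That is false for the superspecial surface. So in this configuration $S$ has no primitive embedding into any Niemeier lattice at all, and a proof must rule the configuration out by a separate argument.
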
 
\noindent In \parencite{HOHN2016618}, the invariant lattices of all actions on the Leech lattice have been classified. There is a difference in notation in \parencite{HOHN2016618}, i.e.\ the Leech lattice is defined to be positive definite, whereas we define it to be negative definite. In the list of Höhn and Mason, the group which is listed for each invariant lattice $\Lambda^{G_s}$ is the kernel of the map $O(\Lambda_{G_s})\rightarrow O(q_{\Lambda_{G_s}})$.

\begin{Theorem}[\parencite{ohashi2024finite}]\label{matthiaswild} The maximal groups acting faithfully and symplectically on the superspecial $K3$ surface in characteristic $p\leq 11$ are
\begin{itemize}
    \item[\textsc{(}$p=2$\textsc{)}] $M_{21}.2$ and
        $\text{Aut}(\mathfrak S_6)$
    \item[\textsc{(}$p=3$\textsc{)}] $\textsc{PSU}(4,\mathbb F_{3^2})$, $2^2.\mathfrak A_{4,4}$ and $2^4:\mathfrak{S}_{3,3}$
    \item[\textsc{(}$p=5$\textsc{)}] $\textsc{PSU}(3,\mathbb F_{5^2})$, $\mathfrak A_8$ and $2^4:(3\times \mathfrak A_5):2$
    \item[\textsc{(}$p=7$\textsc{)}] $M_{21}.2$ and $2^4:\mathfrak A_7$
    \item[\textsc{(}$p=11$\textsc{)}] $M_{22}$, $M_{11}$ and $2^4:\mathfrak S_{3,3}$
\end{itemize}
    
\end{Theorem}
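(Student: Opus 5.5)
This theorem is quoted from \parencite{ohashi2024finite}. To reprove it, I would reduce everything to lattice theory through Corollary \ref{Leech}, then sort out realizability and maximality.

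\textbf{Step 1 (candidates).} By Corollary \ref{Leech}, every finite $G_s\subset\Aut_s(X)$ is isomorphic to a subgroup $G'\subset\textsc{Co}_0$ with $\rk\Lambda^{G'}\geq 3$ and $\NS_{G_s}\cong\Lambda_{G'}$. So I would run through the list of Höhn and Mason \parencite{HOHN2016618} of invariant lattices $\Lambda^{G'}$ of rank at least $3$. For each entry I would ask whether $\Lambda_{G'}$ (negative definite, without $-2$ vectors) embeds primitively into a lattice in the genus $\textrm{II}_{1,21}p^{\epsilon 2}$ (respectively $\textrm{II}_{1,21}2^{-2}_{\romantwo}$ for $p=2$). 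I would translate this into the existence of an even lattice $N$ of signature $(1,\,21-\rk\Lambda_{G'})$ together with a gluing $H\subset A_{\Lambda_{G'}}$. The gluing must make the resulting discriminant form isometric to that of $\NS$, namely anisotropic of order $p^2$. I would decide existence with Nikulin's criterion, Theorem \ref{NikulinExistenceResult}.

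Two constraints narrow the search. Since $G_s$ must act trivially on $A_{\NS}$ (Theorem \ref{critsymp}), it must act trivially on $A_{\Lambda_{G'}}$ modulo the glued part. Also $A_{\NS}$ has length $2$ at $p$ only, so essentially all of $A_{\Lambda_{G'}}$ away from $p$ must be glued with $N$. This forces $\rk N=\rk\Lambda^{G'}-2$ and pins down $q_N$ through $q_{\Lambda^{G'}}$ and the $p$-part. The $p$-parts of $A_{\Lambda_{G'}}$ are what make large groups with $p\mid |G'|$ possible precisely for $p\le 11$.

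\textbf{Step 2 (realization).} For $p\geq 5$ I would invoke the crystalline Torelli Theorem \ref{crystallinetorelli}. The action of $G'$ on $\NS_{G_s}\oplus N$, extended to $\NS$ by the identity on $N$, is trivial on $A_{\NS}$, so it preserves both strictly characteristic subspaces. Since $\Lambda_{G'}$ contains no $-2$ classes, the positive cone part of $N\otimes\mathbb R$ is not contained in any wall. After composing with an element of the Weyl group, $\NS^{G'}$ therefore contains an ample class, and Lemma \ref{usefulProperties} makes the group finite and induced by automorphisms. For $p=2,3$ Theorem \ref{crystallinetorelli} is not available in this form. There I would instead exhibit explicit models: the Fermat quartic for $\textsc{PSU}(4,\mathbb F_{3^2})$, the Dolgachev--Keum models in characteristic $2$, and suitable elliptic fibrations or quotient constructions for the smaller groups.

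\textbf{Step 3 (maximality).} Among the realizable $G'$ I would discard those contained, up to conjugacy in $\textsc{Co}_0$, in a larger realizable one. Equivalently, I would discard those $\Lambda^{G'}$ that contain the invariant lattice of a larger realizable group in a way compatible with the embedding. What should remain is exactly the listed groups for each $p\le 11$.

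\textbf{Main obstacle.} I expect the hardest part to be twofold: the realization in characteristics $2$ and $3$, where explicit geometry is unavoidable, and the bookkeeping in Step 3. In Step 3, non-conjugate embeddings of isomorphic groups into $\textsc{Co}_0$ (for instance $M_{21}.2$ for both $p=2$ and $p=7$) must be compared carefully. I would handle this by checking inclusions directly at the level of the Höhn--Mason lattices, rather than through abstract group isomorphism.
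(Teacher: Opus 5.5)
The paper does not prove this theorem; it imports it from \parencite{ohashi2024finite}, so your outline has to stand on its own. Steps 1 and 2 are sound for $p\geq 5$. Corollary \ref{Leech} gives the necessary condition. $G'$ acts trivially on $A_{\Lambda_{G'}}$, so it extends by the identity on $N$ and acts trivially on $A_{\NS}$. Conjugating by a Weyl group element and applying Theorem \ref{crystallinetorelli} then realizes it. There are, however, two genuine gaps.

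\textbf{Gap 1: characteristics $2$ and $3$.} Explicit models show that the listed groups act. Step 1, applied to a hypothetical finite overgroup, can be used to show that they are maximal. But the theorem also asserts that there are \emph{no other} maximal groups. To exclude a lattice-admissible candidate from Step 1 that is not on the list, you must show one of two things: that no copy of it is induced by automorphisms, or that every copy of it in $\Aut_s(X)$ lies in a larger finite subgroup. Both require knowing which isometries of $\NS$ (trivial on $A_{\NS}$, preserving the ample cone) come from automorphisms. Your Step 3 relies on a notion of ``realizable'' for which you have no criterion when $p\leq 3$, and models cannot supply one. You need a Torelli-type input at these primes. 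For $p=3$ one can use the extension of crystalline Torelli to $p\geq 3$ by Bragg and Lieblich. For $p=2$ some independent geometric input is needed, for instance an explicit determination of $\Aut(X)$.

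\textbf{Gap 2: the maximality test in Step 3.} This test is not correct as stated, even for $p\geq 5$. Containment $G'\subset G''$ up to conjugacy in $\textsc{Co}_0$ only says $\Lambda^{G''}\subset\Lambda^{G'}$ on the Leech side. Whether a given copy $G_s\subset\Aut_s(X)$ lies in a larger finite group is decided in $\NS$. For $p\geq 5$, $G_s$ is non-maximal if and only if some \emph{ample} class $h\in\NS^{G_s}$ has stabilizer in $\{g\in O(\NS): g_A=\id\}$ strictly larger than $G_s$; Torelli then realizes that stabilizer.

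This criterion depends on two things the Leech-side containment does not see. First, the embedding $\Lambda_{G'}\cong\NS_{G_s}\hookrightarrow\NS$ must extend to an embedding of $\Lambda_{G''}$. That is a separate gluing problem, depending on $p$ and on $N=\NS^{G_s}$; note that $N$ and $\Lambda^{G'}$ are different lattices, of different rank and signature. Second, the resulting $N'=\Lambda_{G''}^{\perp}\subset N$ must meet the ample chamber fixed by $G_s$. If $N'\otimes\mathbb R$ only meets another Weyl chamber, the overgroup is realized only after conjugating by a Weyl group element. That conjugation replaces $G_s$ by a different subgroup, and $G_s$ itself can remain maximal. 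This is the same ampleness issue the paper has to handle in Lemma \ref{pIsLargeEnough}.

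So discarding by $\textsc{Co}_0$-containment can drop groups that are genuinely maximal. Your ``equivalently'' is not an equivalence. The check has to be carried out embedding by embedding inside $\NS$, with the chamber condition included.
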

\begin{Theorem}[\parencite{ohashi2024finite}]\label{matthiasmainresulttame}
A finite group $G_s$ admits a tame symplectic action on the superspecial $K3$ surface $X$ in characteristic $p$ if and only if $p \nmid |G_s|$ and $G_s$ can be realized as a subgroup of $M_{23}$ whose action on $\{1, 2,\dots, 24\}$ has
\begin{itemize}
    \item[i)] either at least 5 orbits
    \item[ii)] or exactly 4 orbits such that the orbit lengths $l_1,\dots,l_4$ satisfy the condition
    \begin{equation*}\label{conditionTameGroup}
        \left ( \frac{l_1\cdots l_4}{p}\right)=-1.
    \end{equation*}
\end{itemize}
\end{Theorem}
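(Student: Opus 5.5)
The plan is to reduce the statement to the existence of a primitive embedding of a coinvariant lattice into $\NS$, and then to decide that existence with Theorem \ref{NikulinExistenceResult}. Let $G_s\subset\Aut_s(X)$ be finite and tame. Corollary \ref{Leech} gives $G_s\subset\textsc{Co}_0$ with $\rk\Lambda^{G_s}\geq 3$ and $\Lambda_{G_s}\cong\NS_{G_s}$. Since $\Lambda^{G_s}\oplus\Lambda_{G_s}$ has finite index in the unimodular $\Lambda$, the discriminant group $A_{\Lambda_{G_s}}$ is killed by $\vert G_s\vert$. Tameness therefore gives $p\nmid\det\Lambda_{G_s}$. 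Put $T\coloneqq\NS^{G_s}$. Then $T$ has signature $(1,\rk\Lambda^{G_s}-3)$, rank $r\coloneqq\rk\Lambda^{G_s}-2$, and discriminant form
\[
q_T\cong -q_{\Lambda_{G_s}}\oplus q_p,
\]
where $q_p$ is the anisotropic $p$-part of $q_{\NS}$, with $l_p(A_T)=2$.

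For rank $3$, $T$ would have rank $1<l_p(A_T)=2$, which violates condition 2) of Theorem \ref{NikulinExistenceResult}; hence $\rk\Lambda^{G_s}\geq 4$. At this point I would invoke the Höhn–Mason list \parencite{HOHN2016618}: the subgroups of $\textsc{Co}_0$ with fixed rank at least $4$ and order prime to $p>$ (relevant primes) are conjugate into $M_{23}\subset 2^{12}:M_{24}$, acting by permutations of the standard frame. For such groups $\rk\Lambda^{G_s}$ equals the number of orbits on $\{1,\dots,24\}$. Moreover, $\Lambda^{G_s}\otimes\mathbb Q$ has a diagonal form with entries proportional to the orbit lengths $l_i$, so $\det\Lambda^{G_s}\equiv l_1\cdots l_k$ up to rational squares.

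\emph{Necessity.} If there are exactly $4$ orbits, then $r=2=l_p(A_T)$, so condition 3) of Theorem \ref{NikulinExistenceResult} at $p$ applies. It says that $-\vert A_T\vert$ times $\det K(q_p)$ must be a square in $\mathbb Z_p^*$. Since $\vert A_T\vert=p^2\det\Lambda_{G_s}$ and $\det\Lambda_{G_s}\equiv\det\Lambda^{G_s}\equiv l_1\cdots l_4$ modulo squares, with $\epsilon=-\left(\frac{-1}{p}\right)$, this should come out as $\left(\frac{l_1\cdots l_4}{p}\right)=-1$. If there are at least $5$ orbits, then $r\geq 3>l_p(A_T)$, so no condition arises.

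\emph{Sufficiency.} Conversely, assume the orbit condition. Theorem \ref{NikulinExistenceResult} produces $T$, and gluing $T$ with $\Lambda_{G_s}[\,]$ along $A_{\Lambda_{G_s}}$ gives an even lattice in the genus of $\NS$. This genus contains a single class by uniqueness of the superspecial surface (alternatively, by Nikulin's uniqueness criteria for indefinite lattices). Let $G_s$ act as given on $\Lambda_{G_s}$ and trivially on $T$. Its action on $A_{\Lambda_{G_s}}$ is trivial (coinvariant lattice), so it extends to $\NS$ with trivial action on $A_{\NS}$. In particular it preserves $C_1$ and $C_2$.

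Because $\Lambda_{G_s}$ contains no $(-2)$-vectors, $T\otimes\mathbb R$ meets the interior of a Weyl chamber. After composing the embedding with an element of the Weyl group and $\pm1$, this chamber is the ample cone, and $G_s$ preserves it. Theorem \ref{crystallinetorelli} then realizes $G_s$ by automorphisms, Lemma \ref{usefulProperties} c) shows they form a finite group, and Theorem \ref{critsymp} shows they are symplectic.

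The main obstacle is the group-theoretic reduction in the middle step. One must show that tame subgroups of $\textsc{Co}_0$ with $\rk\Lambda^{G}\geq4$ are exactly the subgroups of $M_{23}$, and that $\det\Lambda^{G_s}$ has the square class of $\prod l_i$. This is not automatic, since a priori sign changes in $2^{12}$ could appear. I would first attempt it via the Höhn–Mason tables, checking which fixed lattices of rank $4$ have discriminant prime to $p$. If that is too unwieldy, I would fall back on Mukai's argument transported to the Leech lattice.
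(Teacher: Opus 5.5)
The paper only quotes this theorem from Ohashi--Sch\"utt and gives no proof, so I am judging your argument on its own. Much of your framework is sound:
the identification $\Lambda_{G_s}\cong\NS_{G_s}$,
the fact that $\vert G_s\vert$ kills $A_{\Lambda_{G_s}}$,
the formula $q_T\cong -q_{\Lambda_{G_s}}\oplus q_p$,
the exclusion of $\rk\Lambda^{G_s}=3$ by length,
the Legendre computation from condition 3) at $p$ for four orbits,
and the Torelli realization.

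\textbf{The gap.} The step that carries all the group theory is false as you state it: tame subgroups of $\textsc{Co}_0$ with $\rk\Lambda^{G}\geq4$ need not be conjugate into $M_{23}$ acting on the frame.
\begin{itemize}
\item The fixed-point-free involution of $M_{24}$ (cycle type $2^{12}$) has $\rk\Lambda^{g}=12$ but trace $0$ on $\Lambda\otimes\mathbb Q$. Every involution of $M_{23}$ has trace $8$, so it is not conjugate into $M_{23}$.
\item More seriously, $2^8{:}M_{20}$ and $3^4{:}\mathfrak A_6$ fix rank-$4$ sublattices of $\Lambda$; they occur as symmetry groups of K3 sigma models. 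They are tame for all $p\geq7$. Yet they are not even abstractly subgroups of $M_{24}$, since $2^{14}$ resp.\ $3^6$ does not divide $\vert M_{24}\vert$.
\end{itemize}
Your fallback, keeping the rank-$4$ H\"ohn--Mason lattices whose discriminant is prime to $p$, filters nothing. You proved yourself that tameness already gives $p\nmid\det\Lambda_{G_s}$.

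\textbf{What is missing.} Write $k=\rk\Lambda^{G_s}$. You need the constraint that $\Lambda_{G_s}$ embeds primitively into $\NS$. That is, a lattice $T$ of rank $k-2$ and signature $(1,k-3)$ with $q_T\cong -q_{\Lambda_{G_s}}\oplus q_p$ must exist. This forces $l_q(A_{\Lambda_{G_s}})\leq k-2$ for all $q\neq p$, together with conditions 3), 4) of Theorem \ref{NikulinExistenceResult} at those $q$. These are the constraints that must cut down the list.

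For $k\geq5$ these conditions coincide with those for embedding $\Lambda_{G_s}$ into $E_8^{\oplus2}\oplus U^{\oplus3}$ with complement of signature $(3,k-5)$:
\begin{itemize}
\item the $p$-part has length $2<k-2$ and imposes nothing;
\item $(-1)^{k-3}=(-1)^{k-5}$;
\item $p^2$ is a square unit at every $q\neq p$;
\item $\sign q_p\equiv4$, so the signature condition matches.
\end{itemize}
Since $\Lambda_{G_s}$ is rootless, Torelli then gives a complex K3 surface with a symplectic $G_s$-action. Mukai's theorem then gives $G_s\subset M_{23}$ with at least $5$ orbits.

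For $k=4$ there is no complex counterpart, and you must work through the rank-$4$ entries of H\"ohn--Mason. You have to discard the entries violating the length or local conditions. You also have to show the survivors are permutation classes of $4$-orbit subgroups of $M_{23}$, or otherwise relate $\det\Lambda^{G_s}$ to the orbit lengths. Without this, your Legendre computation concerns an embedding for which $\det\Lambda^{G_s}\equiv\prod l_i$ has not been established.

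\textbf{The converse has the same hole.} ``Theorem \ref{NikulinExistenceResult} produces $T$'' needs conditions 3), 4) at $q\neq p$.
\begin{itemize}
\item For $k\geq5$ you can import them from a complex realization via the comparison above.
\item For $k=4$ they must be checked case by case. For example, $\mathfrak S_6$ has $3^{+2}$ in its coinvariant genus, so 3) at $q=3$ is a genuine constraint; it happens to hold.
\end{itemize}
Three smaller points:
\begin{itemize}
\item Theorem \ref{crystallinetorelli} is stated only for $p\geq5$, so characteristics $2,3$ need a separate argument.
\item At $p=2$ the condition at $p$ comes from 4), not 3).
\item Uniqueness of $\NS$ in its genus follows from Nikulin's criterion (indefinite, $\rk\geq l(A)+2$), not from uniqueness of the surface.
\end{itemize}
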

\noindent
Let $X$ be the superspecial $K3$ surface in characteristic $p$ and $G_s\subset\Aut_s(X)$ be a finite group. We say that $\Tilde G_s$ is the \textit{saturation} of $G_s$ if $\Tilde G_s$ is the largest subgroup of $\Aut_s(X)$ such that $\NS^{G_s}\cong \NS^{\Tilde G_s}$. We say that $G_s$ is \textit{saturated} if $G_s = \Tilde G_s$. Clearly, every maximal symplectic group is saturated.  

\begin{Lemma}\label{asfixedpointlattice}
    Let $X$ be the superspecial $K3$ surface in characteristic $p>11$. Then the maximal tame groups in a given characteristic either have exactly four orbits and fulfill the condition (ii) in Theorem \ref{matthiasmainresulttame} or they are one of the maximal symplectic groups over the complex numbers classified by Mukai \parencite{Mukai1988}.\\
    For a  maximal symplectic tame group $G_s\subset \Aut_s(X)$ the invariant lattice $\NS^{G_s}$ has rank 2 or 3.
    If $G_s$ is a saturated tame group with an invariant lattice of rank 2, then $G_s$ is maximal.
\end{Lemma}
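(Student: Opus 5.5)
The whole argument goes through the Leech lattice. By Corollary \ref{Leech}, every finite symplectic group $G_s$ on $X$ is isomorphic to a subgroup $G'\subset \textsc{Co}_0$ with $\Lambda_{G'}\cong\NS_{G_s}$ and $\rk \Lambda^{G'}\geq 3$. I will use the following standard correspondence. $\NS^{G_s}$ and $\Lambda^{G'}$ are both orthogonal complements of the same lattice $L:=\NS_{G_s}$, embedded in lattices with discriminant forms $q_{\NS}$ and $0$ respectively. Hence
\[
\rk\NS^{G_s}=22-\rk L \quad\text{and}\quad \rk\Lambda^{G'}=24-\rk L,
\]
so $\rk \NS^{G_s}=\rk\Lambda^{G'}-2$.

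The tame case is handled by Theorem \ref{matthiasmainresulttame}. For $p>11$ every finite symplectic group is tame, and $G_s$ embeds in $M_{23}$ with at least 4 orbits on $\{1,\dots,24\}$. For a permutation subgroup of $M_{24}$, $\rk\Lambda^{G'}$ equals the number of orbits, via the standard permutation representation on $\Lambda\otimes\mathbb Q$ through the Golay code frame. So $\rk\NS^{G_s}=(\#\text{orbits})-2\geq 2$.

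\textbf{Orbit dichotomy.} Suppose $G_s$ has at least 5 orbits. Then it lies in a subgroup of $M_{23}$ with exactly 5 orbits. By Mukai's characterization, these maximal 5-orbit subgroups are exactly the Mukai groups, i.e.\ the maximal complex symplectic groups. So a maximal $G_s$ with $\geq5$ orbits is contained in, and hence by maximality equal to, a Mukai group. One also checks via Theorem \ref{matthiasmainresulttame}(i) that every Mukai group still acts on $X$. Any $G_s$ that is not contained in a Mukai group has exactly 4 orbits, and since it acts on $X$ it satisfies (ii). This gives the first sentence of the lemma.

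\textbf{Rank bound.} The rank is the orbit count minus 2. This gives rank 3 for the Mukai groups, which have 5 orbits, and rank 2 for the 4-orbit groups. The one point needing care is that a maximal $G_s$ never has more than 5 orbits. That holds because any subgroup with $\geq 6$ orbits lies in a larger 5-orbit subgroup, and that subgroup still acts on $X$ since (i) of Theorem \ref{matthiasmainresulttame} imposes no Legendre condition.

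\textbf{Saturation.} Let $G_s$ be saturated with $\rk\NS^{G_s}=2$, and suppose $G_s\subsetneq H$ with $H$ finite. By Lemma \ref{usefulProperties}(d), $\NS^{H}\subset\NS^{G_s}$. By Lemma \ref{usefulProperties}(c), $\NS^H$ contains an ample class. The same argument as above gives $\rk\NS^{H}\geq 2$, because $H$ is also tame and has at least 4 orbits. Therefore $\NS^H\otimes\mathbb Q=\NS^{G_s}\otimes\mathbb Q$. Both lattices are primitive, being invariant lattices, so $\NS^H=\NS^{G_s}$. Saturation then forces $H\subseteq G_s$, a contradiction, so $G_s$ is maximal.

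\textbf{Main obstacle.} The delicate step is the identification between symplectic groups on $X$ and their $M_{23}$ permutation realizations. Specifically, I need that the Leech invariant rank equals the orbit count, and that the saturated realization of $G_s$ can be taken inside $M_{23}$ rather than merely inside $\textsc{Co}_0$. I would take both from the proof of Theorem \ref{matthiasmainresulttame} in \parencite{ohashi2024finite}, using the Höhn–Mason classification \parencite{HOHN2016618} to check that every relevant invariant lattice arises from a permutation subgroup.
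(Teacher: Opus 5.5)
There is a genuine gap, in the containment steps of your dichotomy and rank bound.

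Your rank identity, the fact that a permutation subgroup of $M_{24}$ has $\rk\Lambda^{G'}$ equal to its number of orbits, and your saturation paragraph are fine. The saturation paragraph in fact spells out what the paper compresses into ``this implies that they are maximal''. The problem is that Theorem \ref{matthiasmainresulttame} and Mukai's theorem are statements about abstract groups and permutation subgroups of $M_{23}$, whereas maximality concerns one fixed subgroup $G_s\subset\Aut_s(X)$.

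From ``$G'$ lies in a $5$-orbit subgroup $K\subset M_{23}$'' you conclude that $G_s$ is contained in a Mukai group. For the rank bound you say that $K$ ``still acts on $X$ since (i) imposes no Legendre condition''. But Theorem \ref{matthiasmainresulttame}(i) only gives \emph{some} symplectic action of $K$ on $X$, not one containing your $G_s$.

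To contradict maximality, the given primitive embedding $\NS_{G_s}\cong\Lambda_{G'}\hookrightarrow\NS$ must extend to an embedding of $\Lambda_K\supset\Lambda_{G'}$. Concretely, the lattice $\Lambda_K\cap\Lambda^{G'}$, with its gluing to $\Lambda_{G'}$, has to be realized inside $\NS^{G_s}$ compatibly with the identification of discriminant forms. Its orthogonal complement in $\NS^{G_s}$ must also still contain an ample class. Only then does Theorem \ref{crystallinetorelli}, applied to $K$ acting trivially on $A_{\NS}$, produce a finite $K\subset\Aut_s(X)$ with $G_s\subsetneq K$. This is a real representation problem in the indefinite lattice $\NS^{G_s}$. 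Nothing you cite addresses it, including the sources you name for your ``main obstacle''.

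The issue you do flag is also real. An abstract group can sit inside $\textsc{Co}_0$ in inequivalent ways with different invariant ranks; for example $\mathbb Z/2$ has several involution classes. The realization whose existence Theorem \ref{matthiasmainresulttame} asserts therefore need not be the one attached to your action. So $\rk\NS^{G_s}$ equals the number of orbits minus $2$ only once you know that the specific lattice $\NS_{G_s}$ is the coinvariant lattice of a permutation realization in $M_{23}$.

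The paper does not argue structurally here. It reads off from the Wang--Zheng table of saturated groups, which builds on H\"ohn--Mason, three facts: the saturated tame actions with coinvariant rank $20$ are the four-orbit groups, those with rank $19$ are the Mukai groups, and all lower-rank ones are contained in these. Your route needs exactly this input at the containment step, namely the classification of saturated actions on $X$ together with their inclusions. As written, it does not reduce the lemma to Theorem \ref{matthiasmainresulttame} and Mukai's theorem.
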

\begin{proof}
    As noted, maximal groups are saturated and thus appear in Table 1 of H\"ohn and Mason \parencite{HOHN2016618}. The work of Wang and Zheng \parencite[Table 1]{wang2024finitegroupssymplecticautomorphisms} includes a table of all saturated groups $G_s$ which embed into the symplectic automorphism group of a superspecial $K3$ surface, including the restrictions on the possible characteristic. Furthermore, the rank of the coinvariant lattice associated to the action of $G_s$ on the Néron-Severi lattice is also included.\\
    By considering this table, we may see that the groups with exactly four orbits are the groups with a coinvariant lattice of rank 20. This implies that they are maximal. \\
    The tame groups with rank 19 coinvariant lattice are the maximal groups over the complex numbers. We can also see that all tame groups with a coinvariant lattice of lower rank are contained in them, but they may not be maximal in the symplectic automorphism group due to the existence of finite subgroups which fulfill condition (ii) of Theorem \ref{matthiasmainresulttame}.
\end{proof}
\subsection{A criterion for the absence of extensions}

Let $X$ be the superspecial $K3$ surface in characteristic $p$ and $G\subset \Aut(X)$ a finite group. We know that $\NS^G$ contains an ample class $H$ (i.e.\ is non-empty) by Lemma \ref{usefulProperties} and must be a primitive sublattice in $\NS^{G_s}$ by definition.

\begin{Lemma}\label{AusschlussDuallyPrimitive}
Let $X$ be the superspecial $K3$ surface in characteristic $p\geq 5$ and $G\subset\Aut(X)$ be a finite group. Assume there is an $x\in \NS^{G}$ that is primitive and not dually primitive in $\NS$. Then $G/G_s$ is trivial.
\end{Lemma}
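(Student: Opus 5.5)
Since $x$ is primitive in $\NS$ but not dually primitive, we have $\bil{x}{\NS} = m\mZ$ with $m>1$. The discriminant group of $\NS$ is $(\mZ/p\mZ)^2$, so $\dul{\NS}/\NS$ is $p$-torsion and $p\,\dul{\NS}\subset \NS$. Hence $x/m\in\dul{\NS}$ gives $(p/m)x\in\NS$. Primitivity of $x$ then forces $m\mid p$, so $m=p$. The element $y\coloneqq x/p$ therefore lies in $\dul{\NS}$, and its class $\bar y\in\discg$ is nonzero, because $x$ primitive implies $y\notin\NS$. Since $x$ is $G$-invariant and every $g\in G$ acts on $\dul{\NS}$ through its action on $\NS\otimes\mathbb Q$, we get $g(y)=y$. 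Consequently the induced action $g_A$ on $\discg$ fixes the nonzero vector $\bar y$.

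By Theorem \ref{critsymp}, $G/G_s$ embeds into $O(q_{\NS})$ via $g\mapsto g_A$. It therefore suffices to show that any $g_A$ coming from an automorphism and fixing a nonzero $\bar y$ is the identity. Here I would use the crystalline Torelli theorem (Theorem \ref{crystallinetorelli}, valid for $p\ge5$). The space $\discg\otimes k$ is $2$-dimensional, and $g_A\otimes k$ preserves the two strictly characteristic subspaces $C_1,C_2$. These are distinct isotropic lines, and since $q$ is anisotropic over $\mathbb F_p$ they are conjugate under Frobenius. So $g_A\otimes k$ is diagonal in the basis $c_1\in C_1$, $c_2=f(c_1)\in C_2$, with eigenvalues $\lambda,\mu$. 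Because $g_A$ is defined over $\mathbb F_p$, it commutes with $f$, and this gives $\mu=\lambda^p$. Preservation of the form $(c_1.c_2)\neq0$ gives $\lambda\mu=1$, i.e.\ $\lambda^{p+1}=1$; this is consistent with Theorem \ref{NonSympIndex}.

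Now write $\bar y=a c_1+b c_2$. Since $\bar y\neq 0$ is $\mathbb F_p$-rational and $C_i\cap\discg=\{0\}$, both $a$ and $b$ are nonzero. From $g_A\bar y=\bar y$ we obtain $\lambda a=a$, so $\lambda=1$, hence $\mu=1$ and $g_A=\id$. Equivalently, every nontrivial element of $SO(q)$ for an anisotropic binary form over $\mathbb F_p$ acts without nonzero fixed vectors. One also has to rule out $g_A$ acting as a reflection. Reflections swap $C_1$ and $C_2$ and so do not preserve each of them; Torelli requires both to be preserved, so this case does not occur. Thus every $g\in G$ acts trivially on $\discg$, and Theorem \ref{critsymp} gives $G=G_s$.

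The main obstacle is making this last step rigorous. It needs the precise structure of the strictly characteristic subspaces for $\sigma=1$: two distinct isotropic lines swapped by $f$, and each preserved individually rather than merely as a set. If $C_1=C_2$ could not be excluded, I would instead argue directly: any isometry fixing a nonzero anisotropic vector $\bar y$ is either the identity or the reflection in $\bar y^\perp$. The reflection has determinant $-1$ and interchanges the two isotropic lines, so it is excluded by the Torelli condition.
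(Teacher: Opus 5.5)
Your proof is correct, and its key step differs from the paper's.

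\textbf{The paper's route.} The paper stays over $\mathbb F_p$. An isometry fixing the nonzero class $\tilde x$ preserves the line $\tilde x^\perp$, which is a complement to $\mathbb F_p\tilde x$ because $q$ is anisotropic, and it acts on $\tilde x^\perp$ by a scalar in $\mathbb F_p^*$. Nygaard's bound (Theorem \ref{NonSympIndex}) then forces $|G/G_s|$ to divide $\gcd(p-1,p+1)=2$. The remaining reflection is excluded because its eigenlines are $\mathbb F_p$-rational, so they cannot equal a strictly characteristic subspace.

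\textbf{Your route.} You pass to $\discg\otimes k$ first. The Torelli condition then diagonalizes $g_A\otimes k$ in the isotropic basis $c_1, f(c_1)$, with eigenvalues $\lambda,\lambda^p$ and $\lambda^{p+1}=1$. The rational fixed vector has nonzero coordinates on both lines, which forces $\lambda=1$.

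\textbf{What your route buys.} It does not use Nygaard's theorem; you recover its discriminant-level content along the way. It also disposes of reflections automatically, so your separate paragraph on reflections is redundant, though harmless.

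Your fallback argument is essentially the paper's. You replace the Nygaard step with the more elementary observation that a scalar preserving $q$ on an anisotropic line must be $\pm 1$.

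You also justify carefully, via $m=p$, that the fixed class $x/p$ is nonzero in $\discg$. The paper takes this for granted.

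The structural input you flag as the main obstacle is already recorded in the paper before Theorem \ref{crystallinetorelli}: there are exactly two strictly characteristic subspaces, with $C_2=f(C_1)\neq C_1$, and Torelli requires each to be preserved individually. In any case, preserving $C_1$ already implies preserving $f(C_1)$, because $g_A$ commutes with $f$.
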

\begin{proof}
Let $\Tilde{x}$ be a non-trivial class in $\discg$ that is represented by a rational multiple of $x$.\\
Since $\discg \cong (\mathbb Z/p\mathbb Z )^2$ as a group, we know that the automorphism group of $\discg$ is a subgroup of the automorphism group of $(\mathbb Z/p\mathbb Z )^2$. This inclusion is proper, since automorphisms of $\discg$ have to preserve the quadratic form $q_\NS$. 
As a consequence, $G/G_s$ can be realized as a proper subgroup of $\textrm{Aut}\left( (\mathbb Z / p\mathbb Z)^2 \right)$. 
Since $x\in \NS^G$, the induced action of $G/G_s$ must fix $\Tilde{x}$ in \discg. \\
The orthogonal complement of $\Tilde{x}$ in $\discg$ is preserved too, which is isomorphic to $\mathbb Z/p\mathbb Z $. We recall that the quadratic form on $\discg$ is anisotropic, so $\Tilde x$ is not in the orthogonal complement of $\Tilde{x}$. Consequently, the quotient $G/G_s$ fixes $\Tilde x$ and maps a generator of the orthogonal complement to a multiple of itself.\\ 
Hence, $G/G_s$ must also be contained in $\text{Aut}(\mathbb Z/p\mathbb Z )\cong(\mathbb Z / p\mathbb Z )^*$. But this group has order $p-1$, so by Theorem \ref{NonSympIndex} the order of $G/G_s$ can only be 1 or 2. 
If the order is 2, the action is a reflection along a subspace of $\mathbb F_p^2$. But such a reflection cannot preserve the strictly characteristic subspace of $X$, since the intersection of the strictly characteristic subspace with $\discg$ is trivial.
Thus, by the crystalline Torelli theorem, it is not induced by an automorphism. As a consequence the non-symplectic index must be 1.
\end{proof}
\begin{Remark}\label{explanationduallyprim}
Let $X$ be the superspecial $K3$ surface in characteristic $p\geq 5$ and $G_s\subset \Aut_s(X)$ a finite group.
    Lemma \ref{AusschlussDuallyPrimitive} implies that the ample divisor which is fixed by a non-trivial extension $G$ of $G_s$ (i.e. $[G:G_s]>1$) has to be dually primitive in $\NS$. More importantly, if no primitive element of $\NS^{G_s}$ is dually primitive in $\NS$, Lemma \ref{usefulProperties} d) and Lemma \ref{AusschlussDuallyPrimitive} imply that there is no non-trivial extension of $G_s$ (i.e. $G = G_s$ for any extension of $G_s$). \\
    This fact is frequently used in Sections \ref{tame} and \ref{wild}.
\end{Remark}
\begin{Example}\label{RescalingExample}
    Let $L\subset\NS$ be a rank 2 primitive sublattice. Assume that $p\nmid \det(L^\perp)$. Then $L=L'[p]$ for a lattice $L'$  since the $p$-length of $A_L$ is necessarily 2. Since no gluing along $p$ occurs when taking $\NS$ as a primitive extension of $L$ and $L'$, no element of $L$ is dually primitive in $\NS$.
\end{Example}

\section{Extensions of maximal tame group actions}\label{tame}
\noindent Let $p> 11$. Then we know by \parencite[Theorem 2.1]{DolgachevKeum2009} that all maximal groups acting faithfully and symplectically on the superspecial $K3$ in characteristic $p$ are tame. Note that in characteristic 11, there is also the maximal group $2^4:\mathfrak S_{3,3}$ acting tamely (see \parencite{ohashi2024finite}). All other maximal groups in characteristic $\leq 11$ are wild and will be discussed in section \ref{wild}. \\In this section we prove Theorem \ref{TameCase}.

\noindent By Lemma \ref{asfixedpointlattice}, we know that maximal symplectic groups are precisely the ones with rank 2 or rank 3 invariant lattice. We will discuss the groups with rank 2 invariant lattice (i.e.\ ones which do not appear over the complex numbers) in subsection \ref{fourorbits}.\\
The groups with rank 3 invariant lattice are the maximal groups over the complex numbers and are discussed in Subsection \ref{MukaiGroups}. These groups are not maximal in every characteristic $p\geq 11$, but Theorem \ref{TameCase}(ii) holds independently of their action being maximal or not.

\subsection{Groups which act with exactly four orbits}\label{fourorbits}
Groups with exactly four orbits that act in positive characteristic do not act faithfully on any $K3$ surface over the complex numbers. The maximal such groups are given in Table 3 below (see \parencite[Table 4]{ohashi2024finite}):
\begin{center}
{\rowcolors{2}{white}{lightgray}
\begin{tabular}{ |c|c|c| } 
\hline
symplectic automorphism group $G_s$ & group order & genus symbol of $\Lambda_{G_s}$  \\
\hline
$M_{21}$ & $2^7.3^2.5.7$  &$2_{\text{II}}^{-2} 3^{-1}7^{-1}$ \\ 
$2^4:\mathfrak A_6$& $2^7.3^2.5$ &$4^{-1}_5 8_1^{+1}3^{+1}$  \\ 
$\mathfrak A_7$& $2^3.3^2.5.7$  & $3^{+1} 5^{+1} 7^{+1} $ \\ 
$M_{20}:2$ & $2^7.3.5$ & $4^{-1}_3 8_1^{+1}5^{-1}$  \\
$2^3:L_2(7)$ & $2^6.3.7$ & $4_2^{+2}7^{+1}$\\
$ 2^2.\mathfrak A_{4,4} $ & $ 2^7.3^2 $ & $ 8_6^{-2}3^{-1} $ \\
$ \mathfrak S_6 $ & $ 2^4.3^2.5 $ & $2_{\text{II}}^{-2}3^{+2}5^{+1}  $ \\
$M_{10}$ &$2^4.3^2.5$&  $2^{+1}_5 4^{+1}_13^{-1}5^{+1}   $\\
$2^4:\mathfrak S_{3,3}$ & $2^6.3^2 $   &$4^{+1}_78^{+1}_13^{+2}  $ \\
$3^2:QD_{16} $ & $2^4.3^2 $ & $2^{+1}_14^{+1}_13^{-1}9^{-1} $\\
\hline
\end{tabular}}
\captionof{table}{Maximal tame symplectic automorphism groups with exactly four orbits}\label{Rk2tame}
\end{center}

\noindent These groups act on the superspecial $K3$ surface $X$ in characteristic $p$ if and only if there exists a rank 2 indefinite lattice with which we can glue $\Lambda_{G_s}$ into \NS. Here, $\Lambda$ is the Leech lattice. Note that $\Lambda_{G_s}$ does not glue into the Néron-Severi lattice for every $p$, as is seen in Theorem \ref{matthiasmainresulttame}(ii).
\begin{Proposition}\label{rk2222}
    Let $G_s$ be a group from Table \ref{Rk2tame} and $p$ be a prime such that $G_s$ acts tamely on the superspecial $K3$ surface in characteristic $p$. Then $G_s$ has no non-trivial extension.
\end{Proposition}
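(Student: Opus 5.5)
The plan is to use the criterion of Lemma \ref{AusschlussDuallyPrimitive} in the form of Remark \ref{explanationduallyprim}. Since $G_s$ is maximal, $\NS^{G_s}$ has rank 2 by Lemma \ref{asfixedpointlattice}, and $\NS_{G_s}\cong\Lambda_{G_s}$ by Corollary \ref{Leech}. Take any finite extension $G$ of $G_s$. By Lemma \ref{usefulProperties} c) and d), $\NS^{G}$ contains an ample class and lies inside $\NS^{G_s}$. So it suffices to prove one statement: no primitive element of $L:=\NS^{G_s}$ is dually primitive in $\NS$. Lemma \ref{AusschlussDuallyPrimitive} then forces $G=G_s$.

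\textbf{Step 1: $L=L'[p]$.} I would follow Example \ref{RescalingExample}. The primitive extension $L\oplus\Lambda_{G_s}\subset\NS$ has gluing subgroups $H_L\cong H_{\Lambda_{G_s}}$. Reading off the genus symbols in Table \ref{Rk2tame}, every determinant $\det(\Lambda_{G_s})$ is divisible only by primes in $\{2,3,5,7\}$. Since $p>11$ in the cases of interest, $p\nmid\det(\Lambda_{G_s})$. Hence the gluing involves no $p$-part, and the $p$-part of $A_\NS$, which is $(\mathbb Z/p\mathbb Z)^2$, must come entirely from $A_L$. Since $\rk L=2$, this gives $l_p(A_L)=2$, so the Gram matrix of $L$ is divisible by $p$, i.e.\ $L=L'[p]$.

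For the tame case $p=11$, the only group in question is $2^4:\mathfrak S_{3,3}$, whose genus symbol involves only the primes $2$ and $3$, so the same argument applies. If the proposition is also meant to cover $p=5$ or $p=7$ for groups whose orders are coprime to $p$, I would check those finitely many cases by hand. For example, $2^3:L_2(7)$ and $M_{20}:2$ would need care when $p=7$ or $p=5$; but $7$ divides $|2^3:L_2(7)|$, so that action is not tame for $p=7$, and similarly for $M_{20}:2$ at $p=5$. In general, a prime $q$ dividing $\det\Lambda_{G_s}$ also divides $|G_s|$, so tameness gives $p\nmid\det\Lambda_{G_s}$ directly.

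\textbf{Step 2: conclude non-dual-primitivity.} Write $\NS^\vee\supset L^\vee\oplus\Lambda_{G_s}^\vee$ projected appropriately. For primitive $x\in L$, the pairing satisfies $(x.\NS)\subseteq(x.L^\vee)$ because $\NS$ projects into $L^\vee\oplus\Lambda_{G_s}^\vee$ and $x\perp\Lambda_{G_s}$. I need $(x.\NS)\subseteq p\mathbb Z$. Let $y\in\NS$ and write $y=a+b$ with $a\in L^\vee$, $b\in\Lambda_{G_s}^\vee$. The class of $a$ in $A_L$ lies in $H_L$, which has order prime to $p$. So $ma\in L$ for some $m$ coprime to $p$. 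Since $L=L'[p]$, we get $(x.ma)\in p\mathbb Z$, and as $m$ is invertible mod $p$, $(x.y)=(x.a)\in p\mathbb Z$. Thus $x$ is not dually primitive, Remark \ref{explanationduallyprim} applies, and $G=G_s$.

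\textbf{Main obstacle.} The delicate point is the bookkeeping in Step 1: showing that $p$ never divides $\det\Lambda_{G_s}$ exactly when the action is tame, and handling the small-characteristic tame cases where the table's primes might coincide with $p$. I would first try the general fact that the primes dividing $\det\Lambda_{G_s}$ divide $|G_s|$, which is visible in Table \ref{Rk2tame}, and fall back to a case-by-case check against the table if needed.
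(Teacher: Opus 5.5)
Your proposal is correct and follows the paper's own argument. Tameness gives $p\nmid\det(\Lambda_{G_s})$ by Table \ref{Rk2tame}, so the rank-2 lattice $\NS^{G_s}$ has $l_p=2$ and is a rescaling by $p$ as in Example \ref{RescalingExample}; Lemma \ref{AusschlussDuallyPrimitive} then forces $G=G_s$. Your Step 2 makes explicit the non-dual-primitivity that the paper's Example only asserts, and the one wording fix is that for $p\le 11$ the rank-2 claim should rest on $\rk\Lambda_{G_s}=20$ via Corollary \ref{Leech} rather than on maximality.
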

\begin{proof}
    By the assumption that $G_s$ acts tamely, we can see that $p\nmid \det(\Lambda_{G_s})$ by Table \ref{Rk2tame}. Since $\text{rk }\NS^{G_s} = 2$ and $l_p(A_{\NS^{G_s}} ) = 2$, we are in the same sitation as Example \ref{RescalingExample}. 
     Consequently, by Lemma \ref{AusschlussDuallyPrimitive} any extension of $G_s$ is trivial.
\end{proof}
\begin{Remark}
    This proves part (i) of Theorem 1.2.
\end{Remark}
\subsection{Mukai groups}\label{MukaiGroups}
Let $X$ be the superspecial $K3$ surface in characteristic $p\geq 5$.
We investigate the possible extensions of the maximal groups that act symplectically over the complex numbers, as classified by Mukai in \parencite{Mukai1988}. We call these groups the \textit{Mukai groups}. These are groups $G_s$ such that $\NS^{G_s}$ has rank 3. Extensions of these groups over $\mathbb C$ have been classified in \parencite{brandhorst-hashimoto}. \\
 The Mukai groups act faithfully and symplectically on the superspecial $K3$ surface in every characteristic $p\geq 3$ \parencite[Proposition 10.1]{ohashi2024finite}. This action can be maximal in characteristic $p> 11$, as is noted in \parencite[Theorem 1.6]{ohashi2024finite}.  Our argument will work with the assumption that the characteristic does not divide the determinant of the coinvariant lattice and is greater or equal to 5. This holds true for any prime $\geq 11$, but in some cases also allows us to discuss possible extensions in characteristic 5 and 7, even if the action is wild. Furthermore, the Theorem holds without assuming that the action of $G_s$ on $X$ is maximal. We will show the following:
 \begin{Theorem} \label{Mukaimainresult}
     Let $X$ be the superspecial $K3$ surface in characteristic $p\geq 5$. Let $G_s$ be a Mukai group such that $p\nmid \det(\NS_{G_s})$. Then $G_s$ has an extension with non-symplectic index given in the table below     
     \begin{center}
     \small
     {\rowcolors{2}{white}{lightgray}
\begin{tabular}{ |c|c|c|c|c|c| } 
\hline
$G_s$&$\vert\det(\Lambda_{G_s})\vert$& $\vert G :G_s\vert =1$&$\vert G :G_s\vert =4$ & $\vert G :G_s\vert =6$ & $\vert G : G_s \vert = 2$\\
\hline
$T_{48}$ & $2^7.3$ &  $p  \equiv 1 \mod 24$ & - & $p\equiv 5 \mod 6$ & otherwise\\
$N_{72}$ & $2^2.3^4$ & $p  \equiv 1 \mod 24$ & $p\equiv 3 \mod 4$ & - & otherwise\\
$M_9$ & $2^3.3^3$ &$p  \equiv 1 \mod 24$ & - & $p\equiv 5 \mod 6$& otherwise\\
$\mathfrak S_5$ &$2^2.3.5^2$& $p \equiv 1,49 \mod 120$ &- &-& otherwise\\
$L_2(7)$ & $2^2.7^2$ & $p  \equiv 1,9,25 \mod 56$& $p\equiv 3 \mod 4$ & -& otherwise\\
$H_{192}$ & $2^7.3$& $p  \equiv 1 \mod 24$ & -&-& otherwise\\
$T_{192}$ & $2^6.3$& $p  \equiv 1 \mod 24$ & - & $p\equiv 5 \mod 6$& otherwise\\
$\mathfrak A_{4,4}$ & $2^5.3$& $p  \equiv 1 \mod 24$& $p\equiv 3 \mod 4$ & -& otherwise\\
$\mathfrak A_{6}$ &$2^2.3^2.5$ &  $p
\equiv 1,49 \mod 120$ & $p\equiv 3 \mod 4$ & -& otherwise\\
$F_{384}$ & $2^8$ & $p\equiv 1 \mod 8$& $p\equiv 3 \mod 4$ & -& otherwise\\
$M_{20} $& $2^5.5$ & $p  \equiv1,9 \mod 40$&$p\equiv 3 \mod 4$ & -& otherwise\\
\hline
\end{tabular}}
\normalsize
\end{center}
Any other extension of $G_s$ must have a non-symplectic index which divides the one above.

 \end{Theorem}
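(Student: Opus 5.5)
Our plan is to reduce everything to lattice theory via the crystalline Torelli theorem (Theorem \ref{crystallinetorelli}) and Theorem \ref{critsymp}. Write $L=\NS^{G_s}$, of rank $3$ and signature $(1,2)$, and $K=\NS_{G_s}\cong\Lambda_{G_s}$ by Corollary \ref{Leech}. Since $p\nmid\det(K)$, all the $p$-part of $A_{\NS}\cong(\mZ/p\mZ)^2$ lives in $A_L$, and $\NS$ is a primitive extension of $L$ and $K$ glued along a subgroup of $A_K\cong (A_L)_{\text{prime to }p}$. An extension $G$ of index $n$ is then encoded by an isometry $h$ of $\NS$ preserving $L$ and $K$ (Remark \ref{eigenvalues}) and normalizing $G_s$. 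We also need: $h$ fixes an ample class in $L$; $h_A$ acts on $A_{\NS}=(A_L)_p$ with order exactly $n$; and $h_A\otimes k$ preserves the two strictly characteristic subspaces $C_1,C_2$. For $\sigma=1$, $C_1$ and $C_2$ are the two isotropic lines of the anisotropic plane $A_{\NS}\otimes\mathbb F_{p^2}$, and $f$ swaps them. So the isometries preserving $C_1,C_2$ are exactly the elements of the rotation subgroup $SO(A_{\NS})\cong\mathbb F_{p^2}^{\times,N=1}$, which is cyclic of order $p+1$. Hence the image of $G/G_s$ is a cyclic subgroup of order $n\mid p+1$ that acts by a rotation. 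This part is automatic once $h$ acts on $A_{\NS}$ without reflections.

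\textbf{Upper bound.} First, by Lemma \ref{AusschlussDuallyPrimitive}, the fixed ample class $H\in L^{G}$ must be dually primitive in $\NS$, so it is not divisible by $p$ in $L^\vee$. Next, $h|_L$ is an isometry of the rank $3$ hyperbolic lattice $L$ that fixes $H$. It therefore acts on the rank $2$ negative definite lattice $H^\perp\cap L$ with some finite order, and this order is $1,2,3,4$ or $6$. The action on $(A_L)_p$ factors through this plane: $H$ maps to $0$ modulo $p$ or spans a line, and anisotropy together with the argument of Lemma \ref{AusschlussDuallyPrimitive} forces $H$ to be orthogonal mod $p$, so that $(A_L)_p$ comes from $H^\perp$. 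This gives $n\in\{1,2,3,4,6\}$. Additionally $h|_K$ must lie in $O(K)$, normalize $G_s$, and induce on $A_K$ the map matching $h_A|_{A_L}$ under gluing. Following \parencite{brandhorst-hashimoto}, for each Mukai group the relevant data are the conjugacy classes in $O(K)$ of elements normalizing $G_s$, and their actions on $A_K$. These restrict $n$ to divide $4$ or $6$, or $2$, according to the table, since the complex classification shows exactly which orders of the image in $O(A_K)$ occur. Combining with $n\mid p+1$ yields the divisibility statement: $n=4$ needs $p\equiv3 \bmod 4$ and $n=6$ needs $p\equiv 5\bmod 6$. Index $2$ should be available unless $L$ admits no suitable rotation of order $2$ on its $p$-part. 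The entries "$1$" then come from congruence conditions (e.g.\ $p\equiv1\bmod 24$) under which every element of $L$ is forced to be non-dually primitive, in the spirit of Example \ref{RescalingExample}.

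\textbf{Existence and determining $L$.} To decide which case occurs, we first determine the genus of $L$. It is forced: $A_L\cong A_K(-1)\oplus A_{\NS}$ with signature $(1,2)$, so its $p$-adic symbol is $p^{\epsilon 2}$ and the other symbols are those of $K[-1]$. Existence follows from Theorem \ref{NikulinExistenceResult}. For each residue class of $p$ we then either exhibit a dually primitive positive class with a rotation of the required order, or show none exists. The cleanest route is a comparison with complex $K3$ surfaces: take the Brandhorst–Hashimoto extension $G_{\mathbb C}$ on a complex $K3$ with invariant lattice $T=\NS(Y)^{G_s}$ of rank $3$ and signature $(1,2)$. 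Then try to realize $L$ as an overlattice-free modification, namely $L\cong T'$ with $T'$ obtained from a rank $2$ sublattice of $T^\perp$-type data rescaled by $p$ along a $G$-stable plane. Concretely, we would write $L=\langle H\rangle\oplus P[p]$ up to gluing, with $P$ a binary negative definite form carrying the order-$n$ rotation ($A_2$-type for $n=3,6$, $A_1^2$-type for $n=4$). We would then check with Nikulin's criterion when the resulting genus matches. The quadratic reciprocity conditions on $\epsilon=-\left(\frac{-1}{p}\right)$ and on the Legendre symbols of the primes dividing $\det K$ should reproduce exactly the congruence classes in the table (mod $24$, $40$, $56$, $120$). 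Finally, $h$ is extended to $\NS$ via the gluing criterion, and Theorem \ref{crystallinetorelli} gives an automorphism once $H$ is moved into the ample cone by Weyl reflections commuting with $h$.

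\textbf{Main obstacle.} We expect the hardest step to be this last case-by-case lattice analysis over the eleven groups: showing precisely when a dually primitive ample $G$-fixed class with a compatible rotation exists. The difficulty is that the genus of $L$ may contain several classes. We would first try to handle it uniformly through the correspondence with complex $K3$ surfaces hinted at in the introduction (Theorem \ref{CorrespondenceComplexSupSpe}), reading off the admissible indices from the table in \parencite{brandhorst-hashimoto}.
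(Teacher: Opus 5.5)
Your reduction to lattice theory is sound. Your observation that the isometries of $A_{\NS}$ preserving $C_1,C_2$ are exactly the rotations $SO(A_{\NS})$, cyclic of order $p+1$, is correct; it would even replace the explicit models the paper uses in Propositions \ref{index4} and \ref{index6} to check the Torelli condition. But the mechanism you propose for producing the table fails in two places.

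\textbf{The upper bound.} The coinvariant side imposes no constraint. By Theorem \ref{Hashimotosurj}, $O(K)\to O(q_K)$ is surjective, and $G_s$ is its kernel. So every isometry of $L$ extends to an isometry of $\NS$ preserving $K$, and that extension automatically normalizes $G_s$. There are no ``orders of images in $O(A_K)$'' to read off. Nor can the index-$1$ entries come from all of $L$ being non-dually primitive. $L$ has rank $3$ but $(A_L)_p\cong(\mathbb Z/p\mathbb Z)^2$, so $\{x\in L : p\mid (x.L)\}$ has index only $p$ in $L$. Hence there are always primitive positive $H$ with $p\nmid H^2$ that are dually primitive in $\NS$; Example \ref{RescalingExample} needs $p$-length equal to the rank. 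For such $H$ one has $T'=H^\perp\cap L=C'[p]$, and the real obstruction lies elsewhere. A rotation of $T'$ extends to $L$ by the identity on $H$ only if $[L:\langle H\rangle\oplus T']\in\{1,2\}$, with index $3$ ruled out separately (Proposition \ref{nothree}). For orders $4$ and $6$, $T'$ must moreover be of type $L_4$ resp.\ $L_6$. Deciding for which $p$ such an $H$ exists is the whole content of the theorem.

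\textbf{The comparison with complex $K3$ surfaces.} Your fallback, Theorem \ref{CorrespondenceComplexSupSpe} plus the list of \parencite{brandhorst-hashimoto}, is exactly the paper's route. However, the proposal supplies none of what makes it work, and it sets the comparison up wrongly. On a complex $K3$ with a Mukai group, $\textrm{NS}(Y)^{G_s}$ has rank $1$. The rank-$3$ invariant lattice is $H^2(Y,\mathbb Z)^{G_s}$, which is positive definite and contains $\langle H\rangle\oplus T(Y)$. The missing steps are:
\begin{enumerate}
\item From $(H,T')$, build via Theorem \ref{NikulinExistenceResult} a positive definite $T$ with the same $q$-adic symbols as $T'$ for $q\neq p$ and no $p$-part. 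Then realize $T$ as $T(Y)$ of a singular $K3$ carrying $G_s$ (Lemmas \ref{embedding}, \ref{fromptocomplex}).
\item Conversely, show that the Brandhorst--Hashimoto class $H$ embeds primitively into $\NS^{G_s}$ with $p$-rescaled complement iff Nikulin's condition (3) holds at $p$, i.e.\ iff $\left(\frac{\det T(Y)}{p}\right)=-\left(\frac{-1}{p}\right)$ (Lemma \ref{fromCtoP}).
\item Use that $L_4$ and $L_6$ are determined by their determinant and discriminant type, so that order-$4$/$6$ rotations transfer between $T'$ and $T(Y)$ (Lemma \ref{indexmatches}).
\end{enumerate}
Evaluating that Legendre condition on the finitely many $T(Y)$ is what produces congruences such as those modulo $24,40,56,120$. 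Your phrase ``the reciprocity conditions should reproduce the table'' does not substitute for this.

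\textbf{Ampleness.} Ampleness cannot be arranged by Weyl reflections. Reflections in roots of $H^\perp$ fix $H$. If $H^\perp$ contains a root, no Weyl conjugate of $H$ is ample, while $\NS^G=\langle H\rangle$. You must prove that $H^\perp$ is root-free, as in Lemma \ref{pIsLargeEnough} and Remark \ref{PIsSmall}.
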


\subsubsection{Comparison to the complex case}
We briefly discuss the methods used in \parencite{brandhorst-hashimoto}. For general information on complex $K3$ surfaces we refer to \parencite{Huybrechts_2016}.

\noindent When considering a complex projective $K3$ surface $Y$, we have a finite index decomposition:
\[
\NSY \oplus T(Y)  \subset H^2(Y,\mathbb Z).
\]
Here, $H^2(Y,\mathbb Z)\cong E_8^{\oplus 2}\oplus U^{\oplus3}$ is the \textit{K3 lattice} of signature (3,19) and the \textit{transcendental lattice} $T(Y)$ is the orthogonal complement of $\NSY$ in $H^2(Y,\mathbb Z)$. We note that $H^2(Y,\mathbb Z)$ is even and unimodular. All actions by the maximal symplectic groups over the complex numbers are obtained on \textit{singular} $K3$ surfaces, i.e.\ complex $K3$ surfaces with Picard rank 20 and $T(Y)$ of rank 2. This is the highest possible Picard rank over the complex numbers. The second cohomology of a $K3$ surface is equipped with a \textit{Hodge structure}. This is a decomposition of the form
\[
H^2(Y,\mZ)\otimes \mC \cong H^2(Y,\mC) = H^{2,0}(Y)\oplus H^{1,1}(Y) \oplus  H^{0,2}(Y).
\]
For a singular $K3$ surface $Y$, we have that $H^{1,1}(Y)=\NSY\otimes \mC$ and $H^{2,0}(Y)$ and $H^{0,2}(Y)$ respectively are the two linearly independent isotropic sub-vector spaces in $T(Y)\otimes \mC$. 
For more general details we refer to \parencite[Section 3]{brandhorst-hashimoto}. The global Torelli theorem gives a lattice theoretic way of characterizing automorphisms.

\begin{Theorem}[global Torelli Theorem](see for instance \parencite[Theorem 5.3]{Huybrechts_2016})
    An isometry $g\in O(H^2(Y,\mZ))$ is induced by an automorphism if and only if $g\otimes \mathbb C$ preserves the Kähler cone and the Hodge structure. 
\end{Theorem}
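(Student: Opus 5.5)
The statement is the classical global Torelli theorem, which the paper only cites. If I had to prove it, I would follow the Burns--Rapoport / Looijenga--Peters route. The necessity direction is immediate. For $\alpha\in\Aut(Y)$, the pullback $\alpha^*$ is an isometry of $H^2(Y,\mZ)$ compatible with the decomposition $H^{2,0}\oplus H^{1,1}\oplus H^{0,2}$, because $\alpha^*$ sends holomorphic forms to holomorphic forms. It maps Kähler classes to Kähler classes, because the pullback of a Kähler form under a biholomorphism is Kähler.

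For sufficiency, I would first reduce to the \emph{weak} Torelli statement: two $K3$ surfaces with Hodge-isometric $H^2$ are isomorphic. The reduction uses the structure of the positive cone. The group $W$ generated by reflections $s_\delta$ in $(-2)$-classes $\delta\in\NSY$ acts on the positive cone. Its chambers are the images of the Kähler cone, since by Riemann--Roch each effective or anti-effective $(-2)$-class cuts out a wall. Given a Hodge isometry $g$ preserving the Kähler cone, weak Torelli produces an isomorphism $\alpha$ whose action $\alpha^*$ differs from $g$ by an element of $\{\pm 1\}\times W$. The hypothesis on the Kähler cone then forces $\alpha^*=g$. Uniqueness amounts to the injectivity of $\Aut(Y)\to O(H^2(Y,\mZ))$. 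I would show this via the Lefschetz fixed point formula and a finiteness argument: an automorphism acting trivially on $H^2$ fixes a Kähler class, hence has finite order. It is then symplectic with Lefschetz number $24$, which contradicts the bound of $8$ fixed points for a nontrivial finite-order symplectic automorphism.

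To prove weak Torelli, I would work on the marked moduli space $\mathcal M$ with its period map $\mathcal P\colon\mathcal M\to\Omega=\{[\omega]: \omega^2=0,\ \omega\bar\omega>0\}$. The steps, in order, are as follows.
\begin{enumerate}
\item Local Torelli: $\mathcal P$ is a local isomorphism. This follows from unobstructedness of deformations, $H^1(T_Y)\cong H^1(\Omega^1_Y)$, and the Griffiths transversality computation.
\item Torelli for Kummer surfaces. It follows from Torelli for complex tori, because $H^2(\mathrm{Km}(A),\mZ)$ contains $H^2(A,\mZ)(2)$ together with the $16$ exceptional classes, and the Hodge structure on this part determines $A$.
\item Kummer periods are dense in $\Omega$.
\item Extension to all of $\mathcal M$. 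Take a marked $K3$ surface and a nearby sequence of Kummer surfaces with converging periods. Choose isomorphisms between Kummer surfaces with equal periods, take limits of their graphs as cycles in products, and use properness of the Barlet/Douady space together with a Kähler-class bound on volumes. The limit is a correspondence $Z=\Gamma+\sum C_i\times C_i'$, where the $C_i$ are $(-2)$-curves. Its action on $H^2$ is the given Hodge isometry composed with an element of $W$. Because $K3$ surfaces are minimal, the component $\Gamma$ is the graph of an isomorphism.
\end{enumerate}

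I expect step (4) to be the main obstacle. The non-Hausdorffness of $\mathcal M$, caused by $(-2)$-curves, means limits of isomorphisms degenerate precisely by the correspondences above. One must control the volume of the graphs uniformly, which I would do using Kähler classes near the limit period, and then show that the degenerate components act on $H^2$ exactly through reflections. I would organize this using the description of the Kähler cone as a $W$-chamber from the first reduction.
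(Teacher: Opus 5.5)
The paper gives no proof of this statement; it quotes it from Huybrechts' book. There the argument follows Verbitsky's approach: twistor lines give surjectivity of the period map, and the Hausdorff reduction of a component of the moduli space of marked $K3$ surfaces is shown to be a covering of the simply connected period domain, hence an isomorphism. The limit-of-graphs analysis $Z=\Gamma+\sum C_i\times C_i'$ is used only to compare non-separated marked surfaces with the same period.

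Your outline is instead the classical Burns--Rapoport/Looijenga--Peters route, which replaces the covering argument by Torelli for Kummer surfaces and density of Kummer periods. That is a legitimate alternative; the twistor route avoids the Kummer Torelli and density steps and generalizes to hyperk\"ahler manifolds. Your uniqueness argument (Lefschetz number $24$ versus at most $8$ fixed points) is fine.

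However, the way you connect the pieces has a genuine flaw. Weak Torelli as you state it (``Hodge-isometric $\Rightarrow$ isomorphic''), applied to the pair $(Y,Y)$, is vacuous. It cannot produce an automorphism $\alpha$ with $\alpha^*\in\{\pm1\}\times W\cdot g$. In the classical proof the logic runs the other way. One first proves directly that an \emph{effective} Hodge isometry (one sending some K\"ahler class to a K\"ahler class) is induced by an isomorphism. Only afterwards is the $W$-chamber structure used to deduce the weak form. Your $g$ preserves the K\"ahler cone, so it is already effective, and the $\{\pm1\}\times W$ detour should simply be dropped. This also spares you the claim that the K\"ahler cone is a whole $W$-chamber. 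Riemann--Roch only shows that no $(-2)$-wall meets it; the reverse inclusion is a separate and considerably harder theorem when $Y$ is not projective.

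Running step (4) with $g$ itself needs two ingredients you do not supply.

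First, the isomorphisms $f_n$ between the approximating Kummer surfaces must induce the parallel transport of $g$. Otherwise Kummer Torelli only yields $f_{n*}=\pm w_n\circ g$ with varying $w_n$. Then the classes $[\Gamma_{f_n}]$ need not stay bounded, and the volume bound you rely on fails. So you must show that effectivity of $g$ persists at the chosen Kummer points. This openness statement is a genuine step of the classical proof.

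Second, the limit cycle satisfies $[Z]_*=g$ exactly. The discrepancy sits on $\Gamma$, and it is killed by the K\"ahler hypothesis, not by placing it in $W$. Let $f$ be the automorphism with graph $\Gamma$, and put $D_i=f^{-1}(C_i')$ and $\beta=f_*^{-1}\circ g$. Then $\beta(x)=x+\sum_i(x.C_i)D_i$. Since $\beta$ is an isometry, $\beta^{-1}$ is its adjoint $y\mapsto y+\sum_i(y.D_i)C_i$. For a K\"ahler class $\kappa$, the class $\beta(\kappa)$ is again K\"ahler, because $g$ and $f_*$ both preserve the K\"ahler cone. Adding $\beta(\kappa)=\kappa+\sum_i(\kappa.C_i)D_i$ to $\kappa=\beta(\kappa)+\sum_i(\beta(\kappa).D_i)C_i$ gives
\[
\sum_i(\kappa.C_i)\,D_i+\sum_i(\beta(\kappa).D_i)\,C_i=0 .
\]
All coefficients are positive, so the left side is a non-zero effective class unless the sum is empty. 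Hence there are no components $C_i\times C_i'$, and $f_*=g$.
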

\noindent Given a finite group $G$, the symplectic subgroup $G_s$ is a finite index normal subgroup with cyclic quotient $G/G_s$, as is shown in \parencite[Lemma 2.1]{Sterk1985}.
There is also a lattice theoretic way of recognizing wether or not an automorphism is symplectic or non-symplectic. Namely, an automorphism $\alpha$ is symplectic if and only if the induced action of $\alpha$ on $T(Y)$ is trivial \parencite[Remark 15.1.2]{Huybrechts_2016}.\\
In particular, this implies that for any extension $G'$ of a symplectic group the finite, cyclic quotient group $G'/G'_s$ acts faithfully on $T(Y)$. The order of any finite, cyclic action on $T(Y)$ is bounded by $\varphi(n)$, where $n$ is the dimension of $T(Y)$. This allows us to find a natural bound on the non-symplectic index.
\begin{Lemma}\parencite[Lemma 5.1]{brandhorst-hashimoto}
     Let $G'_s$ be one of the maximal symplectic groups (i.e.\ the Mukai groups) and $Y$ be a complex $K3$ surface with $G'_s\subset \Aut_s(Y)$. Furthermore, let $G'\subset\Aut(Y)$ such that $G'_s$ is the symplectic subgroup of $G'$. Then non-symplectic index $n= \vert G' /G'_s\vert$ is bounded by $\varphi(n) \leq 2$, i.e.\ $n =1,2,3,4,6$.
\end{Lemma}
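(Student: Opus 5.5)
The claim is that for $G'_s$ a Mukai group acting on a complex $K3$ surface $Y$, the non-symplectic index $n=|G'/G'_s|$ satisfies $\varphi(n)\le 2$. The plan is to reduce it to a statement about a finite cyclic group acting faithfully on a rank 2 lattice.

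\textbf{Step 1: the symplectic part already has maximal rank.} Since $G'_s$ is a Mukai group, its coinvariant lattice $H^2(Y,\mZ)_{G'_s}$ has rank $19$. By the results recalled above, this lattice is negative definite and contained in $\NSY$. Hence $\rho(Y)\ge 20$. Since $\rho(Y)\le 20$ for a complex $K3$ surface, $Y$ is singular and $T(Y)$ has rank $2$.

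\textbf{Step 2: the quotient acts faithfully on $T(Y)$.} By \parencite[Lemma 2.1]{Sterk1985}, the quotient $G'/G'_s$ is finite cyclic of order $n$; let $g$ be a generator. An automorphism is symplectic if and only if it acts trivially on $T(Y)$. Therefore $G'/G'_s$ acts faithfully on $T(Y)$, and the action of $g$ on $T(Y)$ has order exactly $n$.

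More precisely, $g$ acts on $H^{2,0}(Y)$ by a primitive $n$-th root of unity $\zeta$. Because $g$ is defined over $\mathbb Q$, every Galois conjugate of $\zeta$ is also an eigenvalue of $g$ on $T(Y)\otimes\mathbb Q$. The standard argument is that $T(Y)\otimes\mathbb Q$ is an irreducible $\mathbb Q[g]$-module, since it has no proper rational sub-Hodge structure. Hence the characteristic polynomial of $g$ on $T(Y)$ is a power of the cyclotomic polynomial $\Phi_n$. This gives $\varphi(n)\mid\operatorname{rk}T(Y)=2$, and in particular $\varphi(n)\le 2$.

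If we want to avoid the irreducibility argument, it suffices to note that the minimal polynomial of $g$ on the rank 2 lattice $T(Y)$ has degree at most $2$. Since $\zeta$ is one of its roots, $\Phi_n$ divides it, so again $\varphi(n)\le 2$.

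\textbf{Step 3: list the possible $n$.} The condition $\varphi(n)\le 2$ holds exactly for $n\in\{1,2,3,4,6\}$.

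\textbf{Main obstacle.} The only step that needs real input is Step 1, namely that a Mukai group forces $\operatorname{rk}T(Y)=2$. This rests on Mukai's and Kondō's description of the coinvariant lattices as rank 19 lattices, which I take from the literature. The rest is elementary linear algebra over $\mathbb Q$.
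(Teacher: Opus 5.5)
Your proposal is correct and follows essentially the same route as the paper: a Mukai group forces $\operatorname{rk} T(Y)=2$, the cyclic quotient $G'/G'_s$ acts faithfully on $T(Y)$, and a faithful cyclic action of order $n$ on a rank $2$ lattice forces $\varphi(n)\le 2$ (your Step 1 and your minimal-polynomial argument spell out what the paper only asserts). One small slip, which does not affect the proof: $T(Y)\otimes\mathbb Q$ need not be an irreducible $\mathbb Q[g]$-module, because $g$-stable subspaces need not be sub-Hodge structures (for $n=2$ one has $g=-\id$ on $T(Y)$, and every line is invariant); what is true is that $\ker\Phi_n(g)$ is a rational subspace whose complexification contains $H^{2,0}(Y)$, hence equals $T(Y)\otimes\mathbb Q$, so the characteristic polynomial is a power of $\Phi_n$ --- and in any case your fallback argument already suffices.
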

 \noindent To classify what extensions of the Mukai groups actually occur, Brandhorst and Hashimoto classify the possible transcendental lattices $T(Y)$ which glue into the invariant lattice $H^2(Y,\mZ)^{G_s}$ such that the automorphism group $\Aut(Y)$ contains a non-trivial extension of $G_s$. They show that there are only finitely many possibilities for finite extensions. To summarize, they compute the possible values $H^2$ of an ample primitive divisor in $H^2(Y,\mZ)^{G'_s}$ such that for the finite index decomposition
 \[
 H^2(Y,\mZ)^{G'_s} \supseteq \langle H\rangle \oplus \underbrace{\langle H \rangle^\perp}_{\coloneqq T(Y)}
 \]
there is an isometry of $H^2(Y,\mZ)^{G'_s}$ which acts trivially on $H$ and non-trivially on $T(Y)$. The list of possibilities can be found in (\parencite[Section 6]{brandhorst-hashimoto}).

\subsubsection{Extensions of Mukai groups}

\noindent
Let $Y$ be a projective complex $K3$ surface such that $G'_s\subset \Aut_s(Y) $ is a Mukai group.
As has been noted in the previous subsection, a finite group $G'\subset \Aut(Y)$ with symplectic subgroup $G'_s$ must induce an action on $T(Y)$ of order equal to the non-symplectic index of $G'$. Since we assume that $G'_s$ is a Mukai group, this index is equal to 1,2,3,4 or 6 by \parencite[Lemma 5.1]{brandhorst-hashimoto}, and $O(T(Y))$ must contain an element of this order.\\  
The possible orthogonal groups of rank 2 definite lattices have been classified. Most importantly, up to scaling there are only two classes of lattices that have an isometry of order 4 or 6.
\begin{Proposition}  \label{twolattices}\parencite[Lemma 3.6, Table 2]{Hulek_Schütt_2012}
    Consider the lattice $A_2[-a]$ associated to the Gram matrix 
    \[
    a \begin{pmatrix}
        2  & 1 \\ 1 & 2
    \end{pmatrix},
    \]
    for a non-zero integer $a$.
     The orthogonal group of $A_2[-a]$ is isomorphic to $D_6$ (the dihedral group of a hexagon). For this reason, we will denote $L_6[a]\coloneqq A_2[-a]$.\\
   Consider the lattice  $(A_1^{\oplus 2})[-a]$ associated to the Gram matrix
    \[
     a \begin{pmatrix}
         2 & 0 \\ 0 & 2
     \end{pmatrix},
     \]
     for a non-zero integer $a$.
     The orthogonal group of $(A_1^{\oplus 2})[-a]$  is isomorphic to $D_4$ (the dihedral group of a square). We will denote $L_{4}[a]\coloneqq (A_1^{\oplus 2})[-a]$.\\
     Every even rank 2 definite lattice with an isometry of order 4 (resp. 6) is isomorphic to $L_{4}[a]$ (resp. $L_6[a]$) for some $a$. \\
     These are the only even rank 2 definite lattices with an orthogonal group that is not isomorphic to $\mathbb Z/2\mathbb Z$ or $\mathbb Z/2\mathbb Z\times \mathbb Z/2\mathbb Z$.
\end{Proposition}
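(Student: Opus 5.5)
The statement to prove is Proposition \ref{twolattices}, which is quoted from \parencite{Hulek_Schütt_2012}. I will recover it from the classical theory of binary quadratic forms.

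First I reduce to positive definite forms. Replacing $L$ by $L[-1]$ does not change $O(L)$, so I may assume $L$ is positive definite with Gram matrix $\begin{pmatrix} 2a & b\\ b & 2c\end{pmatrix}$. I then choose a Gauss-reduced basis, meaning $|b|\le 2a\le 2c$.

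Second I use the embedding $O(L)\subset O(L\otimes\mathbb R)\cong O(2)$. A finite subgroup of $O(2)$ is cyclic or dihedral. It also preserves a lattice, so by the crystallographic restriction its rotations have order $1,2,3,4$ or $6$. Moreover $-\id\in O(L)$ always, so the rotation subgroup has even order $2,4$ or $6$. Hence $O(L)$ is one of $\mathbb Z/2$, $\mathbb Z/2\times\mathbb Z/2$, $\mathbb Z/4$, $D_4$, $\mathbb Z/6$ or $D_6$. The last statement of the proposition therefore follows once I show that rotation order $4$ or $6$ forces $L\cong L_4[a]$ or $L_6[a]$, since every other case leaves $O(L)$ inside $\mathbb Z/2\times\mathbb Z/2$.

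Third I treat an isometry $\rho$ of order 4. Take a shortest vector $v$, with $(v.v)=2a$. Then $\rho v\perp v$ and $(\rho v.\rho v)=2a$. I claim $v,\rho v$ form a basis of $L$. The sublattice $\langle v,\rho v\rangle$ has a square fundamental domain of side $|v|$. Any point of $L$ not in this sublattice could be translated by lattice vectors to within distance at most $|v|/\sqrt2<|v|$ of a lattice point. That would give a nonzero lattice vector shorter than $v$, contradicting minimality. So the Gram matrix is $2a\,\id$, which is $L_4[a]$.

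For an isometry of order 6, the same argument with $\rho v$ at angle $60^\circ$ gives the Gram matrix $a\begin{pmatrix}2&1\\1&2\end{pmatrix}$. Here the covering radius of the hexagonal lattice is $|v|/\sqrt3<|v|$. A convenient alternative is to note that $\rho^2$ has order 3 and satisfies $\rho^2 v=\rho v - v$.

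Fourth I compute $O(L_4[a])$ and $O(L_6[a])$. These are the symmetry groups of the square and hexagonal lattices respectively. Each contains the rotation found above together with the reflection swapping the basis vectors. This gives $D_4$ and $D_6$, and these are maximal by the list in the second step.

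The only step needing care is the basis argument in the third step: showing that a shortest vector and its rotation generate all of $L$ rather than a finite-index sublattice. The covering-radius bound handles it. Evenness of $L$ is used only to write the scale as an integer $a$.
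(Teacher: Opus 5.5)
Your proof is correct and complete. The paper gives no argument for this proposition; it is quoted from Hulek--Sch\"utt (Lemma 3.6 and Table 2), where the orthogonal groups of definite binary forms are recorded case by case. The classical way to produce such a table is reduction theory: one lists the automorphs of a reduced Gram matrix $\begin{pmatrix}2a&b\\ b&2c\end{pmatrix}$ with $|b|\le a\le c$, according to whether $b=0$, $|b|=a$ or $a=c$.

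Your route is more uniform. Embedding $O(L)$ in $O(2)$ and combining the crystallographic restriction with $-\id\in O(L)$ narrows the possible groups to $\mathbb Z/2$, $(\mathbb Z/2)^2$, $\mathbb Z/4$, $\mathbb Z/6$, $D_4$, $D_6$ before any computation. Only the two cases with rotations of order $4$ and $6$ then need work. Your covering-radius step (radius $|v|/\sqrt2$, resp.\ $|v|/\sqrt3$, strictly below $|v|$) is exactly the norm-Euclidean property of $\mathbb Z[i]$, resp.\ $\mathbb Z[\zeta_3]$. Algebraically, $L$ is a torsion-free rank-one module over the principal ideal domain $\mathbb Z[\rho]$, hence $L=\mathbb Z[\rho]v$, and $\rho$-invariance of the form gives $(v.\rho v)=0$, resp.\ $(v.\rho v)=(v.v)/2$. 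This also shows that in the order-$6$ case integrality alone forces $(v.v)$ to be even. So evenness is genuinely used only for order $4$; otherwise $\mathbb Z^2$ would occur. What the tabular approach gives in addition is the finer split between $O(L)\cong\mathbb Z/2$ and $O(L)\cong(\mathbb Z/2)^2$, which this statement does not need.

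A cosmetic point: the Gauss-reduced basis in your first step is never used, and the condition as written is off by a factor of two. For the Gram matrix $\begin{pmatrix}2a&b\\ b&2c\end{pmatrix}$, reducedness means $|b|\le a\le c$. Your condition $|b|\le 2a\le 2c$ allows e.g.\ $\begin{pmatrix}4&3\\ 3&4\end{pmatrix}$, whose first basis vector is not shortest. Simply drop the reduced basis and define $a$ by $(v.v)=2a$ in the third step.
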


\begin{Remark}\label{latticesnutzlich}
    \begin{itemize}
    \item Let $Y$ be a complex $K3$ surface. Let $G'\subset \Aut(Y)$ be a  finite group whose symplectic subgroup $G'_s$ is a Mukai group. The non-symplectic index $[G':G'_s]$ can only be equal to $4$ if $T(Y)\cong L_{4}[a]$ for a suitable integer $a$. The non-symplectic index $[G':G'_s]$ can only be equal to $3$ or $6$ if $T(Y)\cong L_{4}[a]$ for a suitable integer $a$.
        \item The lattice $L_{4}$ is the only positive definite lattice of rank 2 with determinant 4 and a discriminant group isomorphic to $(\mathbb Z/2\mathbb Z)^2$ as a group.
        \item The lattice $L_6$ is the only positive definite even lattice of rank 2 with determinant 3.
    \end{itemize} 
\end{Remark}

\begin{Theorem}\parencite[Theorem 5.1]{Hashimoto_2012}\label{Hashimotosurj}
    Let $G_s$ be one of the Mukai groups and $\Lambda$ be the Leech lattice. Then the natural map 
    \[
    O(\Lambda_{G_s})\longrightarrow O(q_{\Lambda_{G_s}})
    \]
    is surjective. 
\end{Theorem}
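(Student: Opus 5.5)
The plan is to prove the surjectivity case by case for the eleven Mukai groups, reducing everything to small computations on rank 3 lattices by working inside the Leech lattice. Fix a Mukai group $G_s\subset \textsc{Co}_0=O(\Lambda)$ realising the action. Then $\Lambda^{G_s}$ has rank $3$ and $\Lambda_{G_s}$ has rank $21$, and both are primitive in $\Lambda$. Since $\Lambda$ is unimodular, the gluing map of the primitive extension $\Lambda\supset \Lambda^{G_s}\oplus\Lambda_{G_s}$ is an anti-isometry
\[
A_{\Lambda^{G_s}}\cong A_{\Lambda_{G_s}},\qquad q_{\Lambda_{G_s}}\cong -q_{\Lambda^{G_s}}.
\]
This identifies $O(q_{\Lambda_{G_s}})$ with $O(q_{\Lambda^{G_s}})$. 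Moreover, by the extension criterion for isometries of primitive extensions, a pair $(h,f)\in O(\Lambda^{G_s})\times O(\Lambda_{G_s})$ glues to an element of $O(\Lambda)$ exactly when $h_A$ and $f_A$ correspond under this identification.

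Consider the setwise stabiliser $S\subset \textsc{Co}_0$ of $\Lambda^{G_s}$, which contains the normaliser $N_{\textsc{Co}_0}(G_s)$ and $-\id$. Every $s\in S$ restricts to an isometry $s|_{\Lambda_{G_s}}\in O(\Lambda_{G_s})$, and its image in $O(q_{\Lambda_{G_s}})$ equals (under the identification) the image of $s|_{\Lambda^{G_s}}$ in $O(q_{\Lambda^{G_s}})$. It therefore suffices to show that the composite
\[
S\longrightarrow O(\Lambda^{G_s})\longrightarrow O(q_{\Lambda^{G_s}})
\]
is surjective. This turns a question about a rank 21 definite lattice into one about a rank 3 definite lattice, where everything is explicitly computable.

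The concrete steps, carried out for each group, are as follows:
\begin{enumerate}
\item Read off $\Lambda^{G_s}$ from the table of H\"ohn and Mason \parencite{HOHN2016618}, and compute its discriminant form. The determinants are the small numbers listed in Theorem \ref{Mukaimainresult}, e.g.\ $2^7.3$ or $2^2.7^2$.
\item Compute the finite group $O(q_{\Lambda^{G_s}})$ by decomposing it into its $p$-parts. It is tiny: typically a product of groups of order at most $4$ for the odd primes, together with a $2$-part of moderate size.
\item Compute $O(\Lambda^{G_s})$, which is finite and small because $\Lambda^{G_s}$ is definite of rank $3$.
\item Determine which elements of $O(\Lambda^{G_s})$ extend to $\Lambda$. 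An element $h$ extends if some $f\in O(\Lambda_{G_s})$ has matching discriminant action. The available $f$ are exhibited as restrictions of elements of $S$, which can be produced explicitly from the known normalisers of the Mukai groups in $\textsc{Co}_0$ and from $-\id$.
\item Check that the images of these elements generate $O(q_{\Lambda^{G_s}})$.
\end{enumerate}

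The main obstacle is step 4. It is not a priori clear that the stabiliser $S$ (equivalently, the normaliser of $G_s$ in $\textsc{Co}_0$) is large enough to hit all of $O(q)$, in particular the $2$-primary part for groups like $F_{384}$ or $T_{192}$, where $A_q$ has a large $2$-part. I would first try $N_{\textsc{Co}_0}(G_s)$ together with $-\id$, computed by machine from an explicit Leech lattice model, and compare the image order with $|O(q)|$. If a gap remains, a fallback is available. Use that $\Lambda_{G_s}$ contains no $(-2)$-vectors and embeds primitively into an indefinite even unimodular lattice (e.g.\ $\textrm{II}_{1,25}$) with indefinite complement $M$ of small length. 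By Nikulin's criterion, $O(M)\to O(q_M)$ is surjective. One then transports extending isometries through this second gluing, after verifying case by case that the missing discriminant automorphisms are realised on the definite side. In any case, the final verification is a finite computation per group, which I would carry out with a computer algebra system.
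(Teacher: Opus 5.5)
The paper gives no argument for this statement; it imports it from Hashimoto. Judged on its own, your plan has a genuine gap, and it sits exactly at the step that carries the weight.

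\textbf{The reduction is stronger than the theorem.} An element $s\in S$ yields a pair $(s|_{\Lambda^{G_s}},s|_{\Lambda_{G_s}})$ with matching discriminant actions. Hence the image of $S$ in $O(q)$ is
\[
\operatorname{im}\bigl(O(\Lambda^{G_s})\to O(q)\bigr)\cap\operatorname{im}\bigl(O(\Lambda_{G_s})\to O(q)\bigr).
\]
So the claim that $S\to O(q)$ is onto is equivalent to the theorem \emph{together with} surjectivity of $O(\Lambda^{G_s})\to O(q_{\Lambda^{G_s}})$. That second statement concerns a definite lattice. You never isolate it, and nothing in your argument guarantees it. The obstacle you name, whether $N_{\textsc{Co}_0}(G_s)$ is large enough, is only half of what can go wrong.

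Nor is $O(q)$ tiny. For $L_2(7)$ the $7$-part is an anisotropic plane over $\mathbb F_7$, so $O(q_7)$ is dihedral of order $16$. You would therefore need isometries of $\Lambda^{G_s}$, and elements of $S$, of order $8$.

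Your fallback does not close the gap. Nikulin's surjectivity for the indefinite complement $M\subset\mathrm{II}_{1,25}$ only produces isometries of $M$. Gluing them into $\mathrm{II}_{1,25}$ still needs a matching isometry of $\Lambda_{G_s}$. So ``verifying that the missing discriminant automorphisms are realised on the definite side'' is the theorem itself.

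\textbf{The ranks are wrong, and this matters.} Since $\Lambda_{G_s}\cong\NS_{G_s}$ and $\NS^{G_s}$ has rank $3$ in the rank-$22$ lattice $\NS$, we get $\operatorname{rk}\Lambda_{G_s}=19$ and $\operatorname{rk}\Lambda^{G_s}=5$ (five $M_{23}$-orbits), not $21$ and $3$. The rank-$3$ lattices with this discriminant form are $\NS^{G_s}$ and $H^2(Y,\mathbb Z)^{G'_s}$, and on them your steps 3--5 would fail.

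For $F_{384}$, the Fermat quartic has $T(Y)=\mathrm{diag}(8,8)$, $h^2=4$ and determinant $2^8$. This gives $H^2(Y,\mathbb Z)^{G'_s}\cong\langle4\rangle\oplus\langle8\rangle\oplus\langle8\rangle$. Its $16$ isometries act on $\mathbb Z/4f\oplus\mathbb Z/8u\oplus\mathbb Z/8v$ only by signs and $u\leftrightarrow v$. But the map $f\mapsto f+4u$, $u\mapsto 2f+3u$, $v\mapsto v$ lies in $O(q)$. This shows that surjectivity on the complement is a real condition, not a formality.

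To make the plan into a proof, do one of the following.
\begin{enumerate}
\item Compute $O(\Lambda_{G_s})$ directly (rank $19$, no roots, e.g.\ via Plesken--Souvignier) and compare the order of its image with $|O(q)|$. This avoids the complement entirely.
\item Keep the Leech reduction, but check separately for each of the eleven rank-$5$ lattices that $O(\Lambda^{G_s})\to O(q)$ is onto. Then check that enough of these isometries are realised by elements of $S$.
\end{enumerate}
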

The coinvariant lattice $\Lambda_{G_s}$ associated to the Mukai group $G_s$ is isomorphic to the coinvariant lattice $\NS_{G_s}$ of the embedding $G_s\hookrightarrow \Aut(X)$. We recall this fact in Theorem \ref{Leech}. 
\begin{Corollary}
    \label{hashimotosurjective}
    Let $Y$ be a complex $K3$ surface and $G'_s\subset \Aut_S(Y)$ a Mukai group.
    Then every isometry of $\hyz^{G'_s}$ can be extended to an isometry of $\hyz$.
\end{Corollary}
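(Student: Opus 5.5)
This follows from Theorem \ref{Hashimotosurj} and the standard gluing criterion for extending isometries, recalled in the preliminaries on primitive extensions. Write $L \coloneqq \hyz$, $M \coloneqq \hyz^{G'_s}$ and $N \coloneqq \hyz_{G'_s} = M^\perp$. Since $L$ is unimodular and both $M$ and $N$ are primitive in $L$, the primitive extension $M \oplus N \subset L$ is given by a gluing map $\gamma \colon A_M \to A_N$. Unimodularity forces $H_M = A_M$ and $H_N = A_N$, so $\gamma$ is an isomorphism of groups with $q_N(\gamma(x)) = -q_M(x)$.

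The criterion reads as follows: an isometry $f \in O(M)$ extends to $L$ if and only if some $h \in O(N)$ satisfies $h_A = \gamma \circ f_A \circ \gamma^{-1}$. The map $\gamma \circ f_A \circ \gamma^{-1}$ is an automorphism of $A_N$, and it preserves $q_N$ because $f_A$ preserves $q_M$ and $\gamma$ is an anti-isometry. The whole task therefore reduces to showing that $O(N) \to O(q_N)$ is surjective.

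For the surjectivity, the plan is to identify $N$ with the Leech coinvariant lattice of the same group, up to sign. By Mukai's theorem, together with the Kondō/Hashimoto description of symplectic actions via the Leech lattice (the complex analogue of Corollary \ref{Leech}), the coinvariant lattice $\hyz_{G'_s}$ of a symplectic action of a Mukai group is isometric to $\Lambda_{G'_s}$. This holds up to the sign convention: $\Lambda$ is negative definite here, and $N$ is negative definite of rank 19. Negating a lattice changes neither the orthogonal group nor the set of automorphisms of its discriminant form ($q \mapsto -q$ has the same automorphisms). Hence Theorem \ref{Hashimotosurj} gives surjectivity of $O(N) \to O(q_N)$, and so every $f \in O(M)$ extends to $L$.

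The main obstacle is justifying $N \cong \Lambda_{G'_s}$ (or its negative) in the complex setting. The paper stated Corollary \ref{Leech} only for the superspecial surface, so the complex statement has to be quoted from Hashimoto \parencite{Hashimoto_2012}. There, for each Mukai group $G'_s$, the coinvariant lattice is shown to be unique, independent of the surface $Y$ and of the action, and equal to the Leech coinvariant lattice. It is this uniqueness, not merely an isomorphism of genera, that we rely on. A second minor point is that the discriminant-form automorphism transported through $\gamma$ must land in $O(q_N)$ rather than $O(-q_N)$; this is harmless because $O(q_N) = O(-q_N)$.
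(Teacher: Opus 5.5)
Your proposal is correct and follows essentially the paper's route. The paper's proof just cites Theorem \ref{Hashimotosurj} together with Hashimoto's Proposition 5.3. You have unpacked that citation into two steps: the explicit unimodular gluing argument, which reduces the claim to surjectivity of $O(N)\to O(q_N)$, and the identification $\hyz_{G'_s}\cong\Lambda_{G'_s}$ taken from Hashimoto. Both lattices are already negative definite under the paper's conventions, so your discussion of signs is unnecessary but harmless.
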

\begin{proof}
    This follows from Theorem \ref{Hashimotosurj} and \parencite[Proposition 5.3]{Hashimoto_2012}.
\end{proof}

\begin{Proposition}\parencite[Section 6]{brandhorst-hashimoto}\label{simonhashimotoresult}
    Let $Y$ be a complex $K3$ surface with $G'_s\subset \Aut_s(Y)$ a Mukai group. Let $\langle H\rangle =(T(Y))^\perp$ in $\hyz^{G'_s}$ and let $m$ be the index $[\hyz^{G'_s}:\langle H \rangle \oplus T(Y)]$.
    The results of Brandhorst and Hashimoto show the following:
    \begin{itemize}
        \item[1)] If $m=1$ and $T(Y)\cong L_6[a]$ for a suitable $a$, then there exists an index 6 extension of $G'_s$ on $Y$.
        \item[2)] If $m=1,2$ and $T(Y)\cong L_{4}[a]$ for a suitable $a$, then there exists an index 4 extension of $G'_s$ on $Y$.
        \item[3)] If $m=1,2$, then there exists an index 2 extension of $G'_s$ on $Y$. 
        \item[4)] There is no non-trivial extension of the action of $G'_s$ on $Y$ if $m\geq 3$.
    \end{itemize}
\end{Proposition}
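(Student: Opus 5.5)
We verify each item by reducing it to a gluing problem on the rank $3$ lattice $L\coloneqq\hyz^{G'_s}=\langle H\rangle^\perp{}^\perp\supseteq\langle H\rangle\oplus T(Y)$, and then use the global Torelli theorem to turn isometries into automorphisms.

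\textbf{Reduction to an isometry of $L$.} First we characterize extensions. Let $G'\supseteq G'_s$ be an extension of index $n$. Then $G'$ preserves $L$ and $T(Y)$ by Remark \ref{eigenvalues}, and fixes an ample class in $L\cap\NSY=\langle H\rangle$, so it fixes $H$. A generator of $G'/G'_s$ therefore gives an isometry $g\in O(L)$ with $g(H)=H$ and $g|_{T(Y)}$ of order $n$. Conversely, take $g\in O(L)$ that is the identity on $H$ and restricts to an isometry $\rho\in O(T(Y))$ of order $n$ in $SO(T(Y))$. Such a $\rho$ is a rotation of the definite plane $T(Y)\otimes\mathbb R$, so it preserves the two isotropic lines $H^{2,0}$ and $H^{0,2}$, i.e.\ the Hodge structure. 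By Corollary \ref{hashimotosurjective}, $g$ extends to $\tilde g\in O(\hyz)$. Through the gluing with $\Lambda_{G'_s}$ and Theorem \ref{Hashimotosurj}, $\tilde g$ preserves the coinvariant lattice, so $\tilde gG'_s\tilde g^{-1}$ acts trivially on $L$. Since $G'_s$ is saturated (it equals the kernel of $O(\Lambda_{G'_s})\to O(q_{\Lambda_{G'_s}})$ in the convention of Höhn--Mason), $\tilde g$ normalizes $G'_s$. Finally $\tilde g$ fixes the ample class $H$, so it maps the ample cone to the chamber containing $H$, i.e.\ to itself. The global Torelli theorem then shows that $\tilde g$ is an automorphism, and $\langle G'_s,\tilde g\rangle$ is an extension of index $n$.

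\textbf{Gluing criterion.} The problem is now: when does $\id_{\langle H\rangle}\oplus\rho$ extend to $L$? By the criterion for extending isometries across a primitive extension, this happens exactly when $\rho_A$ acts trivially on the glue subgroup $H_{T}\subset A_{T(Y)}$. This subgroup is cyclic of order $m$, since it is isomorphic to a subgroup of $A_{\langle H\rangle}\cong\mZ/H^2\mZ$, on which the identity acts trivially.
\begin{itemize}
\item For $m=1$ there is no condition. Hence $\rho=-1$ always gives index $2$. If $T(Y)\cong L_4[a]$ (resp.\ $L_6[a]$), the order $4$ (resp.\ $6$) rotation from Proposition \ref{twolattices} gives the corresponding extension. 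This proves 1), and the $m=1$ parts of 2) and 3).
\item For $m=2$, the map $-1$ fixes every $2$-torsion element, which gives 3). For $L_4[a]$, the order $4$ rotation $\rho(x,y)=(-y,x)$ acts on the $2$-torsion of $A_{L_4[a]}$. An isotropic-compatible cyclic glue subgroup of order $2$ must be generated by a class like $\tfrac12(x+y)$ (in suitable coordinates), and $\rho$ fixes that class modulo $L_4[a]$. Checking this explicitly from the Gram matrix gives 2).
\end{itemize}

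\textbf{The case $m\ge 3$ (item 4)).} Let $x$ generate $H_T$, of order $m$, and suppose a non-trivial rotation $\rho$ fixes $x$.
\begin{itemize}
\item If $\rho$ has even order, then $\rho^{n/2}=-1$ also fixes $x$. This forces $2x=0$, contradicting $m\ge3$.
\item If $\rho$ has order $3$, then from $1+\rho+\rho^2=0$ on $T(Y)$ we get $3x=0$. This leaves only $m=3$ with $T(Y)\cong L_6[a]$.
\end{itemize}

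I expect this residual case $m=3$, $n=3$ to be the main obstacle, together with the parallel check that an order-$3$ rotation never fixes a glue element when $m=2$. I would first try to rule it out by computing the action of the order-$3$ rotation on the $3$-part of $A_{L_6[a]}$ and comparing with $q_{\langle H\rangle}=-q_{T}$ on the glue subgroup. If that does not settle it in general, I would fall back on the finite list of admissible pairs $(H^2,T(Y))$ for each Mukai group in \parencite[Section 6]{brandhorst-hashimoto} and verify there that $m=3$ never occurs with a fixed element.
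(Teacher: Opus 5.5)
You reduce extensions to isometries of the rank-3 lattice $L=H^2(Y,\mathbb Z)^{G'_s}$, and this reduction is sound. An extension of index $n$ amounts to a rotation $\rho\in SO(T(Y))$ of order $n$ whose induced map $\rho_A$ is trivial on the cyclic glue group $H_T\subset A_{T(Y)}$ of order $m$; this uses Corollary~\ref{hashimotosurjective}, the fact that the pointwise stabiliser of $L$ is $G'_s$, and the Torelli theorem. From this, item 1), item 3), the $m=1$ part of 2), and the reduction of 4) to the single case $m=n=3$ follow exactly as you say. That part is more self-contained than the paper, which simply quotes the case-by-case tables of \parencite[Section 6]{brandhorst-hashimoto}.

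The $m=2$ part of 2), however, rests on a false step. In $A_{L_4[a]}$ the $2$-torsion is $\{0,\tfrac12x,\tfrac12y,\tfrac12(x+y)\}$, with $q$-values $\tfrac a2,\tfrac a2,a$. The class $\tfrac12x$ glues to $\tfrac12H$ as soon as $2a+H^2\equiv 0 \bmod 8$, for example $a=1$, $H^2=6$. Determinants do not exclude this either: $a=4$, $H^2=24$ gives $\det L=384$, the value for $T_{48}$ and $H_{192}$. The rotation $x\mapsto y\mapsto -x$ swaps $\tfrac12x$ and $\tfrac12y$ modulo $L_4[a]$. No change of coordinates helps, because $O(L_4[a])$ permutes $\{\tfrac12x,\tfrac12y\}$ and fixes $\tfrac12(x+y)$. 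For such a gluing only index $2$ survives. Ruling it out requires the actual discriminant form of $L$ for each Mukai group, which is precisely the data the paper imports from Brandhorst and Hashimoto. You need to invoke that classification for 2) as well, not only as a fallback for 4).

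For 4), your first plan (computing the rotation's action on the $3$-part) cannot succeed. Take the Gram matrix $a\begin{pmatrix}2&1\\1&2\end{pmatrix}$ on $e_1,e_2$. The class $w=\tfrac13(e_1+e_2)$ has order $3$ in $A_{L_6[a]}$ and $q(w)=\tfrac{2a}{3}$. It is fixed modulo $L_6[a]$ by the order-$3$ rotation $e_1\mapsto -e_2$, $e_2\mapsto e_1-e_2$. It glues to $\tfrac13H$ whenever $\tfrac{2a}{3}+\tfrac{H^2}{9}\in 2\mathbb Z$, for example $a=1$, $H^2=12$. So $m=n=3$ is lattice-theoretically possible. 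It is excluded only by the explicit list, which is how the paper argues (compare Proposition~\ref{nothree}).

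The check you flag for $m=2$ with an order-$3$ rotation is immediate: $\rho_A x=x$ gives $3x=0$, and together with $2x=0$ this forces $x=0$.
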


\subsubsection{The proof of Theorem \ref{Mukaimainresult}}
 
\noindent Let $X$ be the supersingular $K3$ surface in characteristic $p\geq 5$ and $G_s\subset \Aut_s(X)$ be one of the Mukai groups. Assume that $p \nmid \det(\NS_{G_s})$. We recall that $\NS_{G_s}$ is isomorphic to the coinvariant lattice of the action on the Leech lattice (\ref{Leech}). This also uniquely determines the genus of $\NS^{G_s}$.\\
We employ similar techniques as the complex case. Namely, we search for possible values of $H^2$ for an ample primitive divisor $H$ in $\NS^{G_s}$ such that there is an extension of $G_s$ which fixes $H$.
Since the invariant lattice of $G_s$ has rank 3 by Lemma \ref{asfixedpointlattice}, for any such $H$ there is a finite index decomposition
\begin{equation}\label{decompo}
    \NS^{G_s} \supseteq \langle H\rangle \oplus \underbrace{\langle H \rangle ^\perp}_{\coloneqq T'},
\end{equation}
where $T'$ is a rank 2 negative definite lattice. For any given extension $G\subset \Aut(X)$ of $G_s$, we know there is an ample divisor $H$ such that $G$ fixes $H$. Hence, for a suitable $H$ an extension $G$  of $G_s$ has to preserve the summands of the direct sum above. Our aim is to show that there is a relation between extensions of $G_s$ which act on the superspecial $K3$ surface in characteristic $p$ and extensions of $G_s$ over the complex numbers.
\begin{Theorem}\label{CorrespondenceComplexSupSpe}
    Let $X$ be the superspecial $K3$ surface in characteristic $p$ and $G_s\subset \Aut_s(X)$ be a Mukai group such that $p \nmid \det(\NS_{G_s})$. 
    Then there is a finite group $G\subset \Aut(X)$ with symplectic subgroup $G_s$ if and only if there is a complex $K3$ surface $Y$ such that 
    \begin{equation}\label{conditionsupsingred}
        \left ( \frac{\det(T(Y))}{p}\right) = -\left ( \frac{-1}{p}\right)
    \end{equation}
    and a finite group $G'\subset\Aut(Y)$ with $G'_s\cong G_s$ and $[G:G_s]=[G':G_s']$.
 
    \[
\begin{tikzcd}
\Aut(X)             & G \arrow[l,phantom, sloped, "\supset"]                       & G' \arrow[r,phantom, sloped, "\subset"]             & \Aut(Y)             \\
\Aut_s(X) \arrow[u,phantom, sloped, "\subset"] & G_s \arrow[l,phantom, sloped, "\supset"] \arrow[u,phantom, sloped, "\subset"] \arrow[r,phantom, sloped, "\cong"] & G'_s \arrow[r,phantom, sloped, "\subset"] \arrow[u,phantom, sloped, "\subset"] & \Aut_s(Y) \arrow[u,phantom, sloped, "\subset"]
\end{tikzcd}
    \]
    
\end{Theorem}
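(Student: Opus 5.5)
The plan is to transfer everything to the rank 3 invariant lattices and compare the two gluing problems. On the superspecial side, $\NS_{G_s}\cong\Lambda_{G_s}\cong H^2(Y,\mZ)_{G'_s}$ by Corollary \ref{Leech} and its complex analogue. Since $p\nmid\det(\NS_{G_s})$, the gluing of $\NS^{G_s}$ with $\NS_{G_s}$ into $\NS$ happens only away from $p$. Thus $\NS^{G_s}$ has discriminant form $q_{\NS^{G_s}} \cong -q_{\Lambda_{G_s}}\oplus q_p$, where $q_p$ is the anisotropic $p$-part $p^{\epsilon 2}$ of $A_{\NS}$. Compare this with the complex invariant lattice $S\coloneqq H^2(Y,\mZ)^{G'_s}$, of signature $(3,0)$ and discriminant form $-q_{\Lambda_{G_s}}$. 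I will show that $\NS^{G_s}$ and $S[-1]$ are related by a ``$p$-modification''. Concretely, $\NS^{G_s}$ contains an ample $H$ with $T'=\langle H\rangle^\perp$ as in (\ref{decompo}) if and only if there is a complex $K3$ $Y$ with $T(Y)$ of rank $2$ such that $T'\cong T(Y)[-p]$, with the same gluing index $m$ between $\langle H\rangle$ and $T'$ (resp.\ $T(Y)$), and with $H^2$ equal on both sides.

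The first step is to prove this correspondence. Given $Y$ with the Legendre condition (\ref{conditionsupsingred}), I would form $\langle H\rangle\oplus T(Y)[-p]$ and glue it with the same gluing subgroup $\cong \mZ/m$ (which has order prime to $p$ when $p\nmid \det$). By Nikulin's Theorem \ref{NikulinExistenceResult} and the genus symbol computation, the result lies in the genus of $\NS^{G_s}$. The condition $\left(\frac{\det T(Y)}{p}\right)=-\left(\frac{-1}{p}\right)$ is exactly what makes the $p$-adic symbol of $T(Y)[-p]$ equal to $p^{\epsilon 2}$ with $\epsilon=-\left(\frac{-1}{p}\right)$. Since indefinite rank 3 genera with these invariants contain one class (this should follow from Nikulin/Eichler for $l\le \rk-2$ at each prime, or could be checked for the eleven groups), I get an embedding with $\NS_{G_s}$ into $\NS$. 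Conversely, given $H\in\NS^{G_s}$, Example \ref{RescalingExample} shows that $T'$ has $l_p=2$, so $T'=T''[p]$. Then $\langle H\rangle\oplus T''[-1]$ glues to a lattice in the genus of $S$, and by the surjectivity of Corollary \ref{hashimotosurjective} together with the surjectivity of the period map I realize $Y$ with $T(Y)\cong T''[-1]$ and $G'_s\subset \Aut_s(Y)$.

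The second step compares automorphisms. A non-symplectic extension of $G_s$ fixing $H$ corresponds to an isometry of $\NS^{G_s}$ that is trivial on $H$ and acts on $T'$ with order $n$. Its action on $A_{T'}$ at $p$ is the action on $A_\NS$, by Theorem \ref{critsymp} and the fact that gluing avoids $p$. By the crystalline Torelli Theorem \ref{crystallinetorelli}, together with the ample-cone argument of Lemma \ref{usefulProperties}, such an isometry lifts to $\Aut(X)$ exactly when it extends to $\NS$ and preserves $C_1,C_2$. A rotation of $T'$ (order $2,3,4,6$, determinant $1$) acts on $(\mZ/p)^2$ as an element of the special orthogonal group of the anisotropic plane, which is cyclic of order $p+1$ and preserves both isotropic lines $C_1,C_2$ over $k$. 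By contrast, $-1$ on $T'$ is also a rotation, whereas reflections swap $C_1$ and $C_2$. Likewise, on the complex side the global Torelli theorem requires the action on $T(Y)$ to preserve $H^{2,0}$, i.e.\ to be a rotation. So in both settings the admissible isometries are exactly the rotations of $T'\cong T(Y)[-p]$ that extend over the gluing with $\langle H\rangle$. These coincide because $O(T'')=O(T''[\pm p])$ and the gluing data are identical. Extension from the invariant lattice to the whole lattice is handled by Corollary \ref{hashimotosurjective} on both sides; on the $X$ side I use Theorem \ref{Hashimotosurj}, since $q_{\NS_{G_s}}$ is the same form. Hence an extension of index $n$ exists on $X$ iff one exists on a suitable $Y$.

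The main obstacle I expect is the first step: showing that the glued lattices are unique in their genus and really embed with the prescribed complement. I would first try Nikulin's uniqueness criterion for indefinite lattices. If that fails for small rank, I would fall back to an explicit check using the Brandhorst–Hashimoto list of $(H^2, T(Y))$ from Proposition \ref{simonhashimotoresult}. A secondary subtlety is ensuring that $H$ can be chosen ample rather than merely positive. This should follow by applying a Weyl-group element commuting with $G_s$, as in the complex case.
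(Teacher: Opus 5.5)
There is a genuine gap. Both of your steps rest on the identification $T'\cong T(Y)[-p]$ (equivalently, on $\NS^{G_s}$ being a $p$-modification of $S[-1]$), and this is false in general.

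Your own discriminant-form computation shows that $S$, not $S[-1]$, agrees with $\NS^{G_s}$ at every prime $q\neq p$. Indeed $S[-1]$ has signature $(0,3)$, so no $p$-modification of it can be $\NS^{G_s}$. Since $\langle H\rangle$ and the glue index $m$ are the same on both sides, cancelling $\langle H\rangle$ gives $T'\otimes\mZ_q\cong T(Y)\otimes\mZ_q$ for all $q\neq p$; this is what the paper proves (Lemmas \ref{embedding}, \ref{fromCtoP}). By contrast, $T(Y)[-p]\otimes\mZ_q$ is $T(Y)\otimes\mZ_q$ twisted by the unit $-p$. That twist flips the sign of each odd-rank Jordan block at $q$ when $\left(\frac{-p}{q}\right)=-1$, and acts similarly at $2$.

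A concrete case: take $L_2(7)$ with $H^2=14$, $T(Y)=\begin{pmatrix}2&0\\0&28\end{pmatrix}$ and $p\equiv 11 \bmod 56$. Remark \ref{l27} gives $T'=T''[p]$ with $T''=\begin{pmatrix}-6&-2\\-2&-10\end{pmatrix}$. This lies in a different genus from $T(Y)[-1]$: the $7$-adic signs differ, since $-p\equiv 3\bmod 7$ is a non-residue.

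Consequences for your Step 1:
\begin{itemize}
\item The lattice you glue from $\langle H\rangle\oplus T(Y)[-p]$ ``along the same subgroup'' lies in the wrong genus. For $m=2$ and $p\equiv 1\bmod 4$ the rescaled glue group can even take the wrong discriminant values, so the gluing does not exist at all.
\item Your converse construction from $\langle H\rangle\oplus T''[-1]$ fails in the same way.
\item The repair is to produce $T'$ (resp.\ $T(Y)$) from Theorem \ref{NikulinExistenceResult} with the prescribed local data. Your Legendre-symbol computation at $p$ survives unchanged, because $\det T'=p^2\det T(Y)$.
\end{itemize}

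Without the rescaling, your Step 2 comparison ``$O(T'')=O(T''[\pm p])$ and the gluing data are identical'' no longer relates the two sides, and a new argument is needed there. You must show that $T'$ admits an order-$4$ (resp.\ order-$6$) isometry compatible with the glue if and only if $T(Y)$ does. The paper does this in Lemma \ref{indexmatches}. Write $T(Y)=L[a]$ and $T'=L'[-pa]$ with $\det L=\det L'$. If one side has an order-$4$ isometry it is a scaling of $L_4$, so the other side has determinant $4$ and discriminant group $(\mZ/2\mZ)^2$, forcing $L_4$ by uniqueness. The order-$6$ case works the same way with even lattices of determinant $3$ and $L_6$. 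Hence the rescaling holds exactly in the cases where you need it.

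For $n=2$, use $-\id$, which exists on every $T'$ and extends over the glue iff $m\le 2$. For the $m=2$ glue in the $L_4$ case, compatibility after scaling uses $p\equiv 3\bmod 4$, which is automatic from $4\mid p+1$.

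Two smaller points:
\begin{itemize}
\item On $X$, ampleness of $H$ cannot be obtained by moving $H$ with a Weyl element, because the set of walls is $W$-stable. What you must show is that $\langle H\rangle^\perp\subset\NS$ contains no roots coming from glue vectors between $T'$ and $\NS_{G_s}$. This is Lemma \ref{pIsLargeEnough}, plus a computation for small $p$.
\item Nikulin's uniqueness criterion does not apply to the rank-$3$ lattice $\NS^{G_s}$, since $l_p=2>\rk-2$. Uniqueness in its genus therefore needs another argument, or should be avoided by gluing all the way up to $\NS$.
\end{itemize}

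On the positive side, your criterion is correct: an isometry of $T'$ preserves $C_1$ and $C_2$ iff its determinant is $1$. Rotations of the anisotropic $\mathbb F_p$-plane fix both isotropic lines over $k$, and reflections swap them. This is cleaner than the paper's verification via explicit models (the Fermat quartic and the $M_9$ double sextic). Once the points above are repaired, it gives a uniform lifting step.
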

\noindent This result allows us to reduce our problem to a finite computation. In detail, it suffices to calculate condition (\ref{conditionsupsingred}) for every of the finitely many $K3$ surfaces in \parencite[Section 6]{brandhorst-hashimoto}. 
\noindent To prove Theorem \ref{CorrespondenceComplexSupSpe}, we will try to relate the decompositions of the invariant lattice in the complex case and in the supersingular case.\\
The following result allows us to bound the index $[ \NS^ {G_s} :\langle H\rangle \oplus T']  $.
\begin{Proposition}\parencite[Prop. 4.7]{Brandhorst_Hofmann_2023}\label{simontommy}
    Let $L$ be an integral lattice and $f \in O(L)$ an isometry of prime order $q$. The characteristic polynomial of $f$ is of the form $\chi_f = \Phi_1^{e_1}\Phi_q^{e_q}$, where $\Phi_i$ is the $i$-th cyclotomic polynomial. Set $B = \ker(\Phi_1(f))$, $C = \ker(\Phi_q(f))$ and $m = \min\{e_1,e_q,l(\discgr{B}),l(\discgr{C})  \}$. Then $qL \subseteq B \oplus C \subseteq L$ and $[ L : B \oplus C] \ \vert \ q^m$.
\end{Proposition}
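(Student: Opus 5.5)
The plan is to show first that $q$ kills $L/(B\oplus C)$, and then to bound the dimension of this $\mathbb F_q$-vector space through the gluing picture for primitive extensions recalled above.

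\emph{Step 1: $qL\subseteq B\oplus C$.} Set $N=1+f+\dots+f^{q-1}=\Phi_q(f)$. For $x\in L$ I write
\[
qx = N(x) + \sum_{i=0}^{q-1}(1-f^i)(x).
\]
Since $(f-1)N = f^q-1 = 0$, the element $N(x)$ is fixed by $f$, so it lies in $B$. Each term $(1-f^i)(x) = (1-f)(1+f+\dots+f^{i-1})(x)$ is annihilated by $\Phi_q(f)$, because $\Phi_q(f)(1-f) = 1-f^q = 0$. Hence every such term lies in $C$. Both $B$ and $C$ are contained in $L$, which gives $qL\subseteq B\oplus C\subseteq L$. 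The ranks are $\rk B=e_1$ and $\rk C=e_q$, since $\Phi_1$ and $\Phi_q$ are coprime over $\mathbb Q$. Moreover $B\perp C$, because $f$-invariant vectors are orthogonal to the image of $1-f$. It follows that $L/(B\oplus C)$ is an elementary abelian $q$-group, so its order is $q^d$ for some $d$, and it remains to show $d\le m$.

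\emph{Step 2: the gluing description.} $B$ and $C$ are kernels of endomorphisms of $L$, so both are primitive. Therefore $L$ is a primitive extension of $B$ and $C$. As in the subsection on primitive extensions, $L/(B\oplus C)$ maps isomorphically onto subgroups $H_B\subseteq A_B$ and $H_C\subseteq A_C$. Integrality of the form suffices here, so evenness is not needed.

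From $H_B\subseteq A_B$ we get $d\le l(A_B)$, and symmetrically $d\le l(A_C)$. For the rank bounds I use that the projection of $x\in L$ to $B\otimes\mathbb Q$ is $\tfrac1q N(x)\in \tfrac1q B$. Consequently $H_B\subseteq \tfrac1q B/B\cong(\mathbb Z/q\mathbb Z)^{e_1}$, so $d\le e_1$. In the same way the $C$-component is $x-\tfrac1qN(x)\in\tfrac1q C$, so $H_C\subseteq (\mathbb Z/q\mathbb Z)^{e_q}$ and $d\le e_q$.

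Combining the four bounds gives $d\le m$, i.e.\ $[L:B\oplus C]\mid q^m$. The only point needing care is identifying the projections explicitly as $\tfrac1q N(x)$, which is what makes the rank bounds immediate. I expect no real obstacle beyond that.
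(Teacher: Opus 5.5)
The paper gives no proof of this proposition; it quotes it from \parencite{Brandhorst_Hofmann_2023}, so your argument is judged on its own. Most of it is correct: Step 1, the primitivity of $B$ and $C$, the injectivity of $L/(B\oplus C)$ into $A_B$ and $A_C$, and the bounds $d\le e_1$, $d\le l(A_B)$, $d\le l(A_C)$.

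The bound $d\le e_q$ is not proved. You assert $\rk C=e_q$, but $\deg\Phi_q=q-1$, so in fact $\rk C=(q-1)e_q$. The inclusion $H_C\subseteq \tfrac1q C/C\cong(\mathbb Z/q\mathbb Z)^{(q-1)e_q}$ therefore only yields $d\le (q-1)e_q$. For odd $q$ this is strictly weaker than the claim. For instance, with $q=3$, $e_3=1$ and $e_1,\ l(A_B),\ l(A_C)\ge 2$, your four bounds give only $d\le 2$, while the statement asserts $d\le 1$. For $q=2$ your argument is complete, since $\deg\Phi_2=1$.

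To close the gap, use the $\mathbb Z[\zeta_q]$-module structure of $C$. The key fact is that $1-f$, and not just $q$, sends the $C$-components of vectors of $L$ back into $C$.
\begin{itemize}
\item For $x\in L$, the $C$-component $y=x-\tfrac1q N(x)$ satisfies $(1-f)y=(1-f)x\in C$, because $(1-f)N=0$.
\item Since $1-f$ is invertible on $C\otimes\mathbb Q$, this gives $H_C\subseteq (1-f)^{-1}C/C$, where $(1-f)^{-1}C\subseteq C\otimes\mathbb Q$ denotes the preimage.
\item The map $1-f$ induces an isomorphism $(1-f)^{-1}C/C\cong C/(1-f)C$.
\item Since $\chi_{f|_C}=\Phi_q^{e_q}$, the latter group has order $|\det(1-f|_C)|=|\Phi_q(1)|^{e_q}=q^{e_q}$.
\end{itemize}
As $|H_C|=q^d$, you get $d\le e_q$. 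Together with your other three bounds, this gives $[L:B\oplus C]\mid q^m$.
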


\noindent In our case, we can establish some restrictions on the decomposition (\ref{decompo}).
\begin{Lemma}\label{einschränkungIndex}
    
    Let $X$ be the superspecial $K3$ surface in characteristic $p$ and let $G\subset\Aut(X)$ be a finite subgroup with symplectic group $G_s\neq G$. Assume that $G_s$ is a Mukai group with $p \nmid \det(\NS_{G_s})$ and $G = \langle g,G_s\rangle$.\\    
    Set $L = \NS^{G_s}$ and $i$ to be a natural number such that $g^i$ has prime order $q$ and is non-symplectic.
    Using the notation of Proposition \ref{simontommy} with $f=g^i$, both of the following statements hold
    \begin{itemize}
        \item[1)] $B = \langle H \rangle$ for an ample divisor $H$ which is primitive in $\NS$ and $p \nmid H^2  $. In particular $e_1=m=1$.\\
        Furthermore, $p \nmid [\NS: \langle H\rangle \oplus C\oplus \NS_{G_s} ]$ (i.e.\ no gluing along $p$ occurs).
        \item[2)] The non-symplectic index of $G$ is $n \in \{2,3,4,6\}$. Writing $C = T'$, $n$ depends on the index of the decomposition $[L:B\oplus C]$, i.e.\
        \begin{center}{\rowcolors{2}{white}{lightgray}
            \begin{tabular}{|c|c|}
            \hline
                 $[\NS^{G_s}: \langle H \rangle \oplus T']$& possible values for $n$  \\ \hline
                 1& 2,3,4,6\\
                 2 & 2,4\\
                 3 & 3\\
                 \hline
            \end{tabular}}
        \end{center}
        
    \end{itemize}
\end{Lemma}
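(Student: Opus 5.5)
We work inside $L=\NS^{G_s}$, which has rank 3 and signature $(1,2)$ by Lemma \ref{asfixedpointlattice}. By Remark \ref{eigenvalues}, $f=g^i$ preserves $L$ and acts trivially on $\NS_{G_s}$. The reason is that $f$ normalizes $G_s$, and on a Mukai group the isometry induced on $\NS_{G_s}$ can be taken modulo $G_s$. If $f$ did not act trivially there, we would replace $f$ by its composite with a symplectic element.

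\textbf{Step 1: $B$ has rank one, spanned by an ample class.} Since $G$ is finite, Lemma \ref{usefulProperties} c) gives an ample class in $\NS^G\subset B$, so $B\neq 0$. If $\rk B\geq 2$, then $C$ would have rank at most $1$. For $q$ odd this is impossible, because $\Phi_q$ has even degree $\geq 2$. For $q=2$, the element $f$ would act on $L$ as a reflection fixing a hyperbolic rank 2 plane. We then study its action on $A_\NS$. Since $f$ is non-symplectic, Theorem \ref{critsymp} says $f_A\neq\id$.

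By Example \ref{RescalingExample}-type reasoning, the $p$-part of $A_\NS$ comes entirely from $L$, because $p\nmid\det\NS_{G_s}$. So $A_L\otimes\mathbb Z_p\cong A_\NS$. If $f$ fixed a rank 2 sublattice of $L$ pointwise, $f_A$ would fix a nonzero vector of $A_\NS$. Lemma \ref{AusschlussDuallyPrimitive} (or its proof) then forces $f_A$ to be a reflection, which is excluded by the crystalline Torelli theorem (Theorem \ref{crystallinetorelli}). Hence $B=\langle H\rangle$ with $H$ ample and primitive in $L$, and therefore in $\NS$, since $L$ is primitive. Consequently $e_1=1$ and $m=1$.

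\textbf{Step 2: $p\nmid H^2$ and no gluing along $p$.} The class $H$ is fixed by $G$, so Remark \ref{explanationduallyprim} says $H$ is dually primitive in $\NS$. Suppose $p\mid H^2$. Then $H/p$ pairs integrally with $\langle H\rangle\oplus C\oplus\NS_{G_s}$. Dual primitivity together with the $p$-adic structure forces the image of $H$ in $A_\NS$ to be nonzero and $f$-fixed, which contradicts Lemma \ref{AusschlussDuallyPrimitive}.

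The index statement is then a determinant count:
\[
\det\langle H\rangle\cdot\det C\cdot\det\NS_{G_s}=p^2\cdot[\NS:\langle H\rangle\oplus C\oplus\NS_{G_s}]^2 .
\]
Since $p\nmid H^2$, $p\nmid\det\NS_{G_s}$, and $\det C$ has $p$-valuation at most 2 (its rank is 2), the right-hand side would otherwise carry too many factors of $p$. Some care is needed here: I expect to show $p^2\,\|\,\det C$ and then read off that the index is prime to $p$.

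\textbf{Step 3: the table.} The image of $G$ acts faithfully on $C=T'$, a negative definite rank 2 lattice. Indeed, a kernel element would fix $L$ pointwise and $\NS_{G_s}$ modulo $G_s$, so it would be symplectic by Theorem \ref{critsymp}. Thus $G/G_s$ embeds as a cyclic subgroup of $O(T')$, and Proposition \ref{twolattices} gives $n\in\{2,3,4,6\}$; alternatively, use the bound from Proposition 4.1 of Keum restricted to rank 2. For each prime $q\mid n$, Proposition \ref{simontommy} applied to the order-$q$ power of $g$ yields $[L:B\oplus C]\mid q^{1}$. So the index is 1, 2 or 3. An index of 2 forces $q=2$ for every prime divisor of $n$, giving $n\in\{2,4\}$. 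An index of 3 forces $n$ to be a power of 3, giving $n=3$.

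The main obstacle is Step 1 and Step 2, namely transferring Lemma \ref{AusschlussDuallyPrimitive} cleanly to the classes involved. I will handle this by identifying $A_\NS$ with the $p$-part of $A_L$ and tracking $f_A$ there.
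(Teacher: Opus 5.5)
Your Step 1 is a workable variant of the paper's argument. You apply the reflection/crystalline-Torelli mechanism from the proof of Lemma \ref{AusschlussDuallyPrimitive} directly on $\discg$, whereas the paper obtains $\rk B=1$ from $l_p(A_C)=2$. Step 2, however, has a genuine gap.

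The determinant count relies on the claim that ``$\det C$ has $p$-valuation at most $2$ because its rank is $2$'', and this is false. The rank bounds the $p$-length of $A_C$, not $v_p(\det C)$; consider $C'[p^2]$. Your argument for $p\nmid H^2$ is circular for the same reason. From $p\mid H^2$ you only get $H/p\in(\langle H\rangle\oplus C\oplus\NS_{G_s})^\vee$. This lies in $\NS^\vee$, making $H$ non-dually-primitive, only if $[\NS:\langle H\rangle\oplus C\oplus\NS_{G_s}]$ is prime to $p$, and that is exactly what is still open. Also, ``the image of $H$ in $\discg$'' is zero, since $H\in\NS$.

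The missing input is one you only use in Step 3. First, $q\neq p$, because a wild automorphism of prime order is symplectic. Then Proposition \ref{simontommy} gives $[L:B\oplus C]\mid q$, hence $p\nmid[L:B\oplus C]$. Together with $p\nmid\det\NS_{G_s}$ (the glue between $L$ and $\NS_{G_s}$ embeds into $A_{\NS_{G_s}}$), this yields $p\nmid[\NS:\langle H\rangle\oplus C\oplus\NS_{G_s}]$; this is the first thing the paper establishes. Only after that does $p\mid H^2$ produce a primitive, $G$-fixed, non-dually-primitive vector, contradicting Lemma \ref{AusschlussDuallyPrimitive}. Then $v_p(\det C)=v_p(\det L)=2$ follows from $(A_L)_p\cong\discg\cong(\mathbb Z/p\mathbb Z)^2$.

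Equivalently, since $q$ is a unit in $\mathbb Z_p$, $L\otimes\mathbb Z_p$ splits into the $f$-eigenlattices, so $(A_L)_p=(A_B)_p\oplus(A_C)_p$. You need this splitting anyway to justify the nonzero $f$-fixed vector of $\discg$ in Step 1. By the very same argument it gives $(A_B)_p=0$, i.e.\ $p\nmid H^2$.

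Separately, your opening claim that $f$ acts trivially on $\NS_{G_s}$ is false. Remark \ref{eigenvalues} only says the decomposition is preserved. Elements of $G_s$ act trivially on $A_{\NS_{G_s}}$, but an extension must in general act on $\NS_{G_s}$ by an isometry that is nontrivial on $A_{\NS_{G_s}}$, to match its action on the glue part of $A_L$. This is why Theorem \ref{Hashimotosurj} is needed in Propositions \ref{index4} and \ref{index2}. Composing $f$ with symplectic elements would also in general destroy its prime order.

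Fortunately you never need this claim. In Step 3, an element of the kernel of $G\to O(T')$ fixes $H$ and $T'$, hence all of $L$. It therefore acts trivially on $(A_L)_p\cong\discg$ and is symplectic by Theorem \ref{critsymp}. The same observation is what excludes $\rk C=0$ for odd $q$ in Step 1; the degree of $\Phi_q$ does not exclude that case.
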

\begin{proof}
    We may assume that $p\neq q$ and $\text{ord}(g)$ is not a power of $p$, since the automorphism would be wild and symplectic otherwise, which contradicts the assumption that $g$ is non-symplectic.\\
  \indent 1) We know that $B$ must contain an ample divisor $H$ that is primitive in $\NS$. This implies that $e_1$ is at least 1. We have taken $f$ not to be trivial, so $e_1$ is at most 2. 
  By assumption $p\nmid \det( \NS_{G_s})$ and thus the local genus symbol at $p$ of $\NS$ and $\NS^{G_s}$ must coincide.
  Furthermore, $p \neq q$ and thus $p \nmid [L :B\oplus C]$ by Proposition \ref{simontommy}. 
 
  \begin{Claim}
      There are suitable integers $b,c$ which are coprime to $p$ such that \\$\det(B)=b \text{ and } \det(C)= p^2 \cdot  c$ .
  \end{Claim}
  \begin{proof}[Proof of the Claim]
      Assume that the above claim is false. Then for suitable integers $b,c$ which are coprime to $p$, it must hold that $\det(B)= p \cdot b \text{ and } \det(C)= p \cdot c$ or $\det(B)= p^2 \cdot b \text{ and } \det(C)= c$ . Since $p \mid
  \det (B)$ and no gluing along $p$ occurs, there is a vector that is not dually primitive which would be fixed by $G$. Then Lemma \ref{AusschlussDuallyPrimitive} shows that $G$ is symplectic which contradicts our assumptions.
  \end{proof}
\noindent      The non-unimodular part of the local genus symbol at $p$ of $C$ must coincide with the non-unimodular part of the local genus symbol of $\NS$ at $p$. In other words, the $p$-Sylow groups of $A_C$ and $A_\NS$ equipped with the respective quadratic forms are isomorphic. Thus $l_p(A_C)=2$, which implies that $e_1=1$, $e_q=2$ and we may write $B = \langle H \rangle$. Additionally, $p \nmid [\NS: \langle H\rangle \oplus C\oplus \NS_{G_s} ]$.\\
  Since $l_p(A_C)=\rk C$, we may conclude that $C$ is the rescaling $C'[p]$ of an even, negative definite integral lattice $C'$.\\

  \indent 2) Set $T'=C$. 
  The proof of 1) has shown that $p \nmid [\NS: \langle H\rangle \oplus T'\oplus \NS_{G_s} ]$ and $T'= C'[p]$. The absence of gluings along $p$ implies that the elements of $T'$ are not dually primitive in $\NS$. Furthermore $G_s$ fixes $T'$ and thus the cyclic quotient $G/G_s$ has to act faithfully on $T'$.\\
  It follows that possible values of $n=[G:G_s]$ are $1,2,3,4,6$ since these are the only $n$-th roots of unity with cyclotomic polynomial of degree 2 or lower. From 1) we know that $m = 1$, so $[\NS^{G_s}: \langle H \rangle \oplus T'] \mid q$. We also note that $g$ has to fix $\langle H\rangle$ as well and thus also preserves the sublattices $\langle H \rangle $ and $ T'$ respectively.\\
  The list above follows directly in the case where $n$ is prime, and for $n =4$ by taking $f=g^2$. If $n=6$, by taking $g^ 2$ and $g^3$ it follows that $[\NS^{G_s}: \langle H \rangle \oplus T']$ must divide 2 and 3 and hence must be equal to 1.
\end{proof}

\begin{Remark}
    The proof of Lemma \ref{einschränkungIndex} also shows that $G/G_s$ is acting faithfully on $T'$. In particular, $G/G_s$ is a cyclic subgroup of $O(T')$.
\end{Remark}
\noindent We describe the aim of the following work in this section. Let $Y$ be a complex $K3$ surface, $G'\subset \Aut(Y)$ a finite group, $X$ the superspecial $K3$ surface in characteristic $p$ and $G\subset \Aut(X)$ a finite group. Assume that $G'_s\cong G_s$ is a Mukai group. We have seen that the action of $G$ on $\NS$ (respectively $G'$ on $\hyz$) lead to a $G$-invariant (resp. $G'$-invariant) orthogonal decomposition:

\begin{align*}
    \underbrace{\langle H \rangle\oplus T'}_{\subseteq \NS^{G_s}} &\oplus \NS_{G_s} \subset \NS , \\
    \underbrace{\langle H' \rangle\oplus T(Y)}_{\subseteq \hyz^{G'_s}} &\oplus \hyz_{G'_s} \subset \hyz  .
\end{align*}

\noindent Our goal is to show the two following claims:
\begin{Claimm}\label{Claim1}
    Fix a prime number $p$ and let $X$ be the superspecial $K3$ surface in characteristic $p$. Then for every extension $G\subset \Aut(X)$ of a Mukai group, there is a complex $K3$ surface $Y$, a subgroup $G'\subset \Aut(Y)$ with the same non-symplectic index as $G$ and $G_s\cong G'_s$ and in the decompositions above
    \[
    \langle H \rangle \cong \langle H'\rangle , \ \hyz_{G'_s}\cong \NS_{G_s}
    \]
    as well as 
    \[
    T(Y)\otimes \mathbb Z_q \cong T '\otimes \mathbb Z_q \ \text{ for every prime $q\neq p$}.
    \]
\end{Claimm}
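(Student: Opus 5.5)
Starting from a non-trivial extension $G=\langle g,G_s\rangle\subset\Aut(X)$ (the trivial case uses any complex $Y$ carrying $G_s$), Lemma \ref{einschränkungIndex} gives the $G$-invariant decomposition $\langle H\rangle\oplus T'\oplus\NS_{G_s}\subset\NS$. In it, $H$ is ample and primitive with $p\nmid H^2$, $T'=C'[p]$ for an even negative definite rank $2$ lattice $C'$, and no gluing along $p$ occurs. The plan is to build an even lattice $T$ of signature $(2,0)$ that is locally isometric to $T'$ at every $q\neq p$, glue $\langle H\rangle\oplus T$ to $\NS_{G_s}$ into the $K3$ lattice, and use surjectivity of the Torelli period map to obtain $Y$.

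\textbf{Step 1 (the candidate $T$).} I will apply Nikulin's existence theorem \ref{NikulinExistenceResult} to signature $(2,0)$ and the discriminant form $q_{T}:=(q_{T'})_{q\neq p}$, i.e.\ the prime-to-$p$ part of $q_{T'}$ with the $p$-part removed. The length condition holds because $l(A)\le 2$. For the signature condition, note that $T'$ has signature $(0,2)$ and $\sign q_{T'}\equiv -2$. Removing the $p$-part $p^{\epsilon 2}$ with $\epsilon=-\left(\frac{-1}{p}\right)$ shifts the signature by the $p$-excess of that component, which I expect to be exactly $4$ (the $p$-part of $\NS$ also contributes a shift of $4$, which is why $\NS$ has signature $1-21\equiv-4$). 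This gives $\sign q_T\equiv 2$, as required. The determinant conditions 3) and 4) must be checked only at primes where the length is $2$. At such odd $q\neq p$ the local lattice $T'\otimes\mathbb Z_q$ is itself a witness up to the determinant factor $p^2$, which is a square, and the sign changes from $(-1)^{t_-}$ should cancel. The prime $2$ is handled the same way.

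Equivalently, and perhaps more cleanly, I would take $T:=C'[-1]$ rescaled appropriately. Since $-p$ is not a square exactly when condition (\ref{conditionsupsingred}) is to hold, a cleaner approach is to choose $T$ in the genus of the positive definite lattice $C'[-p']$, where $p'$ is any prime with $p'\equiv -p$ modulo a sufficiently high power of every prime dividing $2\det C'$. I regard this local bookkeeping at $p$ and $2$ as the main obstacle, and will first try the Nikulin route with explicit genus symbols.

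\textbf{Step 2 (gluing).} Over every prime $q\ne p$, the local gluing data of $\langle H\rangle\oplus T'\oplus\NS_{G_s}$ into $\NS$ transfers verbatim to $\langle H\rangle\oplus T\oplus\NS_{G_s}$. At $p$ there is no gluing, and $T$ has no $p$-part. The result is therefore an even lattice $\Lambda_Y$ of signature $(3,19)$ that is unimodular at every prime, so $\Lambda_Y\cong H^2(Y,\mathbb Z)$. It contains $\langle H\rangle\oplus T$ with the same gluing index as in $\NS^{G_s}$. The element $g$ acts on the glue group through the prime-to-$p$ parts, so $g|_{\langle H\rangle\oplus T'\oplus \NS_{G_s}}$ induces compatible automorphisms of the discriminant forms, since these coincide away from $p$. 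I then realize this as an isometry $g'$ of $T$ with the same order (the cyclic group $G/G_s$ acts faithfully on $T'$, and $O(T)$ has the same local structure at $q\ne p$). If the global realization fails, I will instead choose $T$ within its genus to be $L_4[a]$ or $L_6[a]$ using Remark \ref{latticesnutzlich}, and use Corollary \ref{hashimotosurjective} to extend $g'$ to all of $\Lambda_Y$.

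\textbf{Step 3 (Torelli).} Let $Y$ be the complex $K3$ surface with period the isotropic line of $T\otimes\mathbb C$ on which $g'$ acts by a primitive $n$-th root of unity. Then $\NSY=\langle H\rangle\oplus\NS_{G_s}$ up to gluing, $T(Y)=T$, and $H$ can be taken ample after applying Weyl reflections in $\NS_{G_s}$, which contains no $(-2)$-vectors orthogonal to $H$ because the situation is Leech-type. The global Torelli theorem shows that $G_s$ and $g'$ act as automorphisms, giving $G'$ with $G'_s\cong G_s$ and the same index. The required isomorphisms $\langle H\rangle\cong\langle H'\rangle$, $\hyz_{G'_s}\cong\NS_{G_s}$, and the local isomorphisms of $T(Y)$ with $T'$ hold by construction.
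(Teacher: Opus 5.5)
Your Steps~1 and~3, and the construction of $\Lambda_Y$ in Step~2, are sound and essentially reproduce Lemmas~\ref{embedding} and~\ref{fromptocomplex}. Building $\Lambda_Y$ directly as an overlattice of $\langle H\rangle\oplus T\oplus\NS_{G_s}$ is, if anything, cleaner than the paper's appeal to the local genus of $(E_8^{\oplus 2}\oplus U^{\oplus 3})^{G'_s}$. The gap is in producing $G'$ with the same non-symplectic index. The inference ``realize this as an isometry $g'$ of $T$ with the same order, since $O(T)$ has the same local structure at $q\neq p$'' is not valid. $O(T)$ is a global invariant, and agreement of $T$ and $T'$ at all $q\neq p$ says nothing about whether the element of $O(q_T)$ transported from $g|_{T'}$ lifts to $O(T)$, let alone to an element of order $n$.

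What you need is some $g'_T\in O(T)$ of order $n$ that acts trivially on the glue subgroup of $A_T$ along which $T$ is glued to $\langle H\rangle$; this subgroup has order $m=[\NS^{G_s}:\langle H\rangle\oplus T']$. Only then does $g'_T\oplus\mathrm{id}_{\langle H\rangle}$ extend to the invariant lattice, and only then can Corollary~\ref{hashimotosurjective} be applied. Your fallback does not supply this. The lattice $T$ cannot be ``chosen'' within its genus. One must prove, as in Lemma~\ref{indexmatches} via Remark~\ref{latticesnutzlich}, that $T'\cong L_4[-pa]$ (resp.\ $L_6[-pa]$) forces $T\cong L_4[a]$ (resp.\ $L_6[a]$). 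Even then you only get an abstract isometry of order $4$ or $6$, with no control over the glue.

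This costs nothing when $m=1$, or for $n=2$, where $-\mathrm{id}$ works because $m\mid 2$ by Lemma~\ref{einschränkungIndex}. For $n=4$ with $m=2$, however, you must check that the glue is the unique order-$2$ subgroup fixed by the rotation of $L_4[a]=\langle x,y\rangle$ (with $x^2=y^2=2a$, $x\cdot y=0$), namely $\langle\tfrac12(x+y)\rangle$. For $n=3$ the case $m=3$ must be confronted, and excluding it is exactly the content of Proposition~\ref{nothree}.

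For the existence of $G'$ the paper does not transport $g$ at all. It bounds $m\in\{1,2\}$ (Lemma~\ref{einschränkungIndex}, Proposition~\ref{nothree}). It then matches the $L_4$/$L_6$ shape of $T'$ and $T(Y)$ (Lemma~\ref{indexmatches}). Finally it takes $G'$ from the Brandhorst--Hashimoto existence results (Proposition~\ref{simonhashimotoresult}, Corollary~\ref{Alldecompositions}).

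Two smaller points:
\begin{itemize}
\item Your alternative choice of $T$ in the genus of $C'[-p']$ fails, since $C'[-p']$ is not unimodular at $q=p'$ while $T'$ is.
\item In the trivial case, an arbitrary complex $Y$ carrying $G_s$ does not give $\langle H\rangle\cong\langle H'\rangle$ or $T(Y)\otimes\mZ_q\cong T'\otimes\mZ_q$. The paper, too, only treats $G\neq G_s$.
\end{itemize}
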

\begin{Claimm} \label{Claim2}
    Let $Y$ be a complex $K3$ surface, $X$ the superspecial $K3$ surface in characteristic $p$ and $G'\subset \Aut(Y)$ the extension of a Mukai group $G_s\subset \Aut_s(X)$. Then the rank 1 lattice generated by the ample divisor $H'$ which is fixed by $G'$ embeds primitively into $\NS^{G_s}$ if and only if   
     \[
    \left ( \frac{\det(T(Y))}{p}\right) = -\left ( \frac{-1}{p}\right).
    \]
    In particular, the image of this embedding is a rank 1 lattice generated by an ample divisor $H$ in $\NS$ and there is an extension $G\subset\Aut(X)$ of $G_s$ such that
     \[
    \langle H \rangle \cong \langle H'\rangle , \ \hyz_{G'_s}\cong \NS_{G_s}
    \]
    as well as 
    \[
    T(Y)\otimes \mathbb Z_q \cong T '\otimes \mathbb Z_q \ \text{ for every prime $q\neq p$}.\]
\end{Claimm}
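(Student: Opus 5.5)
The plan is to translate the complex picture to the superspecial one lattice by lattice. I would first pin down the genus of $\NS^{G_s}$ and of $\hyz^{G'_s}$, and then build a rank $2$ negative definite lattice $T'$ that agrees with $T(Y)$ at all primes $q\neq p$ and carries the $p$-part $p^{\epsilon 2}$ of $A_{\NS}$. The Legendre condition in the statement should come out exactly as the local obstruction at $p$ to the existence of $T'$.

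\textbf{Genera.} By Corollary \ref{Leech}, Theorem \ref{Hashimotosurj} and Corollary \ref{hashimotosurjective}, both $\NS_{G_s}$ and $\hyz_{G'_s}$ are isometric to $\Lambda_{G_s}$. Since $p\nmid\det(\Lambda_{G_s})$, the lattice $\NS^{G_s}$ has signature $(1,2)$ and discriminant form $-q_{\Lambda_{G_s}}\oplus q_p$, where $q_p$ is the anisotropic form of $p^{\epsilon 2}$ with $\epsilon=-\left(\frac{-1}{p}\right)$. The lattice $\hyz^{G'_s}$ has signature $(1,2)$ (after the sign convention for $T(Y)$) and discriminant form $-q_{\Lambda_{G_s}}$, which is unimodular at $p$. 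So the two invariant lattices agree at every $q\neq p$ and differ only in the $p$-adic symbol.

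\textbf{Necessity.} Suppose $\langle H'\rangle$ embeds primitively into $\NS^{G_s}$ with complement $T'$. By the argument of Lemma \ref{einschränkungIndex} 1), $p\nmid H'^2$, no gluing along $p$ occurs, and $T'=C'[p]$. Hence the $p$-adic symbol of $T'$ is $p^{\epsilon2}$, and the unit part of $\det(C')$ is determined by $\epsilon$. Away from $p$, $T'$ and $T(Y)$ (up to the sign twist) must have the same discriminant forms, because both are glued to the same $\langle H'\rangle$ by gluing data away from $p$. Comparing $\det(T')=p^2\det(C')$ with $\det(T(Y))$, and using the product formula for the oddity/excess (equivalently condition 3) of Theorem \ref{NikulinExistenceResult}), forces $\left(\frac{\det T(Y)}{p}\right)=-\left(\frac{-1}{p}\right)$.

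\textbf{Sufficiency.} Conversely, assume the Legendre condition holds. By Theorem \ref{NikulinExistenceResult}, a negative definite rank $2$ lattice $T'$ exists whose discriminant form is $q_{T(Y)[-1]}$ away from $p$ and $q_p$ at $p$. I would then glue $\langle H'\rangle\oplus T'$ along the same gluing map as in $\hyz^{G'_s}$; no gluing at $p$ is needed, since $p\nmid H'^2$. This gives a lattice in the genus of $\NS^{G_s}$. Gluing this lattice with $\Lambda_{G_s}$ as in the complex case yields an even lattice in the genus $\textrm{II}_{1,21}p^{\epsilon2}$, which is unique in its genus, so it is $\NS$; this embeds $\langle H'\rangle$ primitively.

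\textbf{Realizing the extension.} This is the step I expect to be the main obstacle. The order $n$ isometry of $\hyz^{G'_s}$, trivial on $H'$, must be transported to an isometry of $T'$ whose action on $A_{T'}$ away from $p$ agrees with the complex one. When $n\geq3$, $T(Y)\cong L_4[a]$ or $L_6[a]$ by Proposition \ref{twolattices}, and I would try choosing $T'$ in the same shape $L_4[b]$ or $L_6[b]$, which carries the rotation. I need to check that the rotation on $T'$ acts compatibly with the fixed gluing on $A_{T'}$ away from $p$; I would verify this by explicit discriminant-form computation. When $n=2$, $-1$ on $T'$ always works.

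The isometry on $\langle H'\rangle\oplus T'$ then extends to $\NS^{G_s}$. Via Theorem \ref{Hashimotosurj}, it extends further to $\NS$ together with $G_s$.

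\textbf{Crystalline Torelli.} On $A_{\NS}\cong(\mathbb Z/p\mathbb Z)^2$ the new element acts as a rotation of the anisotropic plane, since its determinant on $T'$ is $1$. Its eigenlines over $\mathbb F_{p^2}$ are exactly the two isotropic lines, that is, the strictly characteristic subspaces $C_1,C_2$, so both are preserved. After composing with an element of the Weyl group, we may assume the image $H$ of $H'$ is ample, as in the complex argument. Theorem \ref{crystallinetorelli} and Lemma \ref{usefulProperties} c) then give a finite extension $G$ with the stated local isomorphisms, which completes the plan.
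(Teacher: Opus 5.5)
Your overall route is the paper's: build $T'$ with Theorem \ref{NikulinExistenceResult}, glue, transport the rotation, extend it via Theorem \ref{Hashimotosurj}, then apply Theorem \ref{crystallinetorelli}. Two steps, however, fail as written.

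\emph{Ampleness.} You assert that after composing with a Weyl group element the image of $H'$ becomes ample ``as in the complex argument''. A Weyl group element can move $H'$ into the ample cone only if $H'$ lies in the interior of a chamber, i.e.\ only if $H'^\perp\subset\NS$ contains no $(-2)$-vectors. On $Y$ this is automatic, because $H'^\perp\cap\NSY=\hyz_{G'_s}$ embeds into the Leech lattice. In $\NS$, however, $H'^\perp$ is a rank $21$ overlattice of $T'\oplus\NS_{G_s}$, and the glue vectors $z=x/d+y/d$ could a priori be roots. If $H'$ lies on a wall, you have not shown that your glued $G_s$-action or the new isometry preserves the ample cone, so Theorem \ref{crystallinetorelli} cannot be applied. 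The missing ingredient is Lemma \ref{pIsLargeEnough}. Since $T'=C'[p]$, one has $p\mid x^2$, hence $z^2=(pa+b)/d<-2$ as soon as $p\geq 2d$. The finitely many small primes are checked by computer (Remark \ref{PIsSmall}).

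\emph{Sign of $T'$.} The lattice $\hyz^{G'_s}\supseteq\langle H'\rangle\oplus T(Y)$ is positive definite of signature $(3,0)$, not $(1,2)$. Moreover, $T'$ must be locally isometric to $T(Y)$ itself, not to $T(Y)[-1]$, at every $q\neq p$. So the form to feed into Theorem \ref{NikulinExistenceResult} is $q_{T(Y)}\oplus q_p$, the discriminant form of $T(Y)\oplus\NS$. Your form $q_{T(Y)[-1]}\oplus q_p$ has signature $-2+4\equiv 2\pmod 8$, while a negative definite rank $2$ lattice needs $6$. Condition 1) therefore fails and no such $T'$ exists. In addition, gluing it to $\langle H'\rangle$ by the same map as in $\hyz^{G'_s}$ would require $q_{T'}=q_{T(Y)}$ on the glue group. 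With the corrected form the signature is $2+4\equiv 6$, and condition 3) at $p$ reads $\left(\frac{\det T(Y)}{p}\right)=\epsilon=-\left(\frac{-1}{p}\right)$. This is Lemma \ref{fromCtoP}, and you should write it out, since it is exactly where the hypothesis is used.

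For necessity, the lemma you cite for $p\nmid H'^2$ presupposes a non-symplectic element of $\Aut(X)$ fixing $H'$, which a bare embedding does not provide. If you grant $p\nmid H'^2$, the condition already follows from two facts: the relation $\det T'=p^2\det T(Y)\,(m'/m)^2$, where $m,m'$ are the gluing indices (both dividing $H'^2$), and the $p$-adic symbol $p^{\epsilon 2}$ of $T'$. No comparison of forms away from $p$ is needed.

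By contrast, your crystalline Torelli step is correct and cleaner than the paper's. A determinant one isometry of the anisotropic plane $A_{\NS}$ preserves both isotropic lines. This replaces the appeal to the Fermat quartic and sextic models in Propositions \ref{index4} and \ref{index6}.
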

\noindent In the following, we will proof the existence of $Y$ in Claim \ref{Claim1}. Note that the existence of $G'\subset \Aut(Y)$ will be proved at a later point in Corollary \ref{Alldecompositions}.

\begin{Lemma}\label{embedding}
    Let $X$ be the superspecial $K3$ surface in characteristic $p$ and $G_s\subset \Aut_s(X)$ be a Mukai group such that $p \nmid \det(\NS_{G_s})$. Let $\langle H \rangle$ be a positive definite rank 1 lattice with $p\nmid H^2$. Assume $\langle H\rangle$ embeds primitively into $\NS^{G_s}$ with orthogonal complement $T'$. Then there exists $G_s\cong G'_s\subset O(E_8^{\oplus 2}\oplus U^{\oplus3})$ such that
    \begin{itemize}
        \item[a)] $\NS_{G_s}\cong (E_8^{\oplus 2}\oplus U^{\oplus3})_{G'_s}$,
        
        \item[b)] $\langle H\rangle$ embeds primitively into $(E_8^{\oplus 2}\oplus U^{\oplus3})^{G'_s}$,
        
        \item[c)] for the orthogonal complement $T\coloneqq\langle H\rangle ^\perp$ in $(E_8^{\oplus 2}\oplus U^{\oplus3})^{G'_s}$, we have \[T\otimes \mathbb Z_q \cong T '\otimes \mathbb Z_q \ \text{ for every prime $q\neq p$}.\]
    \end{itemize}

\end{Lemma}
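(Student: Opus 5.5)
We construct the complex-side lattice directly by gluing, with Nikulin's existence theorem and the known $p$-adic structure of $T'$ doing the work. Since $p\nmid H^2$ and, by the argument in the proof of Lemma \ref{einschränkungIndex}, the whole $p$-part of $A_{\NS}$ sits in $A_{T'}$, the lattice $T'$ is of the form $T'$ with $(A_{T'})_p\cong(\mathbb Z/p\mathbb Z)^2$ carrying the anisotropic form $q_{\NS}$ restricted to the $p$-part. Let $M\coloneqq \langle H\rangle\oplus T'\oplus \NS_{G_s}$ and note that $\NS$ is an overlattice of $M$ obtained by gluing only away from $p$. The idea is to replace $T'$ by a positive-definite-free lattice $T$ of signature $(0,2)$ with the same $q$-adic structure for all $q\neq p$, but whose discriminant form has trivial $p$-part, and then to glue $\langle H\rangle\oplus T\oplus \NS_{G_s}$ along the same gluing subgroup (which lives only at primes $\neq p$) to obtain an even lattice $\Lambda'$ of signature $(3,19)$... wait, signature must be $(3,19)$, so $T$ should have signature $(2,0)$; we take $T$ positive definite with $q_T\cong (q_{T'})_{\neq p}\oplus$ (appropriate twist), so that the glued lattice becomes unimodular.

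Concretely, I would first define the target form $q_T\coloneqq \bigoplus_{q\neq p}(q_{T'})_q$ and apply Theorem \ref{NikulinExistenceResult} to obtain an even lattice $T$ of signature $(2,0)$ with this discriminant form. Condition 1) reduces to a sign computation: $\sign q_{T'}\equiv -2$ and removing the $p$-part changes the signature mod 8 by $\sign$ of the anisotropic rank-2 $p$-form, which for $p^{\epsilon2}$ with $\epsilon=-(\frac{-1}{p})$ equals $4$; hence $\sign q_T\equiv -2-4\equiv 2$, as needed for signature $(2,0)$. Condition 2) holds since $l(A_T)\le 2$, and conditions 3), 4) at primes where the length equals $2$ follow because the local determinants are copied from $T'$ up to the factor $p^2$ and the sign change $(-1)^{t_-}$, which one checks matches (this bookkeeping, especially at $q=2$ and at odd $q$ with $l_q=2$, is the step I expect to be the main obstacle; the discrepancy between $\det T'=p^2c$ with sign $+$ versus $\det T=c$ with sign $+$ requires using that $p^2$ is a square in $\mathbb Z_q^*$). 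Then $T\otimes\mathbb Z_q\cong T'\otimes \mathbb Z_q$ for $q\neq p$ follows because even $q$-adic lattices of equal rank, determinant class and discriminant form coincide, giving c).

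Next, the gluing data: the subgroup $\NS/M\subset A_M$ has order prime to $p$, so it embeds into $A_{\langle H\rangle}\oplus A_T\oplus A_{\NS_{G_s}}$ via the identification $(A_{T'})_{\neq p}=A_T$, and it stays isotropic. The resulting overlattice $\Lambda'$ is even of signature $(3,19)$ and has discriminant form equal to $(q_{\NS})_p$ with the $p$-part now removed, i.e.\ trivial; so $\Lambda'$ is unimodular, hence $\Lambda'\cong E_8^{\oplus2}\oplus U^{\oplus3}$. Primitivity of $\langle H\rangle$, $T$ and $\NS_{G_s}$ in $\Lambda'$ is inherited since the gluing maps are injective on each summand, exactly as in $\NS$; in particular $\langle H\rangle$ embeds primitively into $\Lambda'^{\,\prime}\coloneqq(\langle H\rangle\oplus T\oplus\NS_{G_s})^{\text{sat}}\cap \NS_{G_s}^\perp$, which gives b).

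Finally, for a), let $G_s$ act on $\NS_{G_s}$ as given and trivially on $\langle H\rangle\oplus T$. Since $G_s$ acts trivially on $A_{\NS_{G_s}}$ (as it does in $\NS$, being symplectic and fixing $\NS^{G_s}$ — alternatively by the structure of the gluing in $\NS$), this action preserves the glue subgroup and extends to $\Lambda'$, giving $G'_s\cong G_s$ with invariant lattice the orthogonal complement of $\NS_{G_s}$ and coinvariant lattice $\NS_{G_s}$ (the invariant lattice of $G_s$ on $\NS_{G_s}$ is zero by definition), establishing a).
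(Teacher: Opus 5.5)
Your proof is correct, but it is organized differently from the paper's.

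\textbf{How the paper argues.} The paper builds $G'_s$ independently of $H$. It takes the Leech-lattice action with $\Lambda_{\tilde G_s}\cong\NS_{G_s}$, uses a cited embedding of $\Lambda_{\tilde G_s}$ into $E_8^{\oplus 2}\oplus U^{\oplus 3}$, and extends by the identity. Only afterwards does it build $T$ via Nikulin's theorem. It then deduces b) and c) from the fact that $\NS^{G_s}$ and $(E_8^{\oplus 2}\oplus U^{\oplus 3})^{G'_s}$ have the same local symbols away from $p$. That inference controls only the genus of the invariant lattice. This is why the paper needs its accompanying remark that $G'_s$ must be chosen so the invariant lattice lands in the right isometry class.

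\textbf{How you argue.} You transport the glue subgroup of $\NS\supset\langle H\rangle\oplus T'\oplus\NS_{G_s}$, which has order prime to $p$, to $\langle H\rangle\oplus T\oplus\NS_{G_s}$ via an isometry $(q_{T'})_{\neq p}\cong q_T$. One construction then yields three things: the unimodular lattice, the action $G'_s$, and the description of $\Lambda'^{G'_s}$ as the primitive extension of $\langle H\rangle$ and $T$.

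\textbf{What your route buys.} For a) you need neither the Leech lattice nor the external embedding result. The genus-versus-class issue never arises. You also get $[\Lambda'^{G'_s}:\langle H\rangle\oplus T]=[\NS^{G_s}:\langle H\rangle\oplus T']$ for free, which the paper records separately. The Nikulin step is identical in both arguments: signature $6-4\equiv 2 \bmod 8$, and conditions 3), 4) hold because $p^2$ is a square in $\mathbb Z_q^*$.

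\textbf{Minor cleanups.}
\begin{itemize}
\item The isomorphism $(q_{T'})_p\cong q_{\NS}$ follows directly from $p\nmid H^2$ and $p\nmid\det(\NS_{G_s})$, since no gluing at $p$ occurs in either step. You do not need the proof of the index lemma, which presupposes a non-trivial extension.
\item No twist is needed: $q_T\coloneqq(q_{T'})_{\neq p}$ is exactly right, as your own signature count shows.
\item The lattice you call $\Lambda'^{\,\prime}$ is just $\NS_{G_s}^\perp$ in $\Lambda'$. Also, $T=\langle H\rangle^\perp$ there because $T$ is primitive in $\Lambda'$: the glue group meets $A_T$ trivially, just as it meets $A_{T'}$ trivially in $\NS$.
\item Your reason that $G_s$ acts trivially on $A_{\NS_{G_s}}$ is sound. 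For $y\in\NS_{G_s}^\vee$, lift it to $w\in\NS^\vee$. Then $gw-w=gy-y$, and this lies in $\NS\cap(\NS_{G_s}\otimes\mathbb Q)=\NS_{G_s}$.
\end{itemize}
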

\begin{Remark}
       Note that the genus of $(E_8^{\oplus 2}\oplus U^{\oplus3})^{G'_s}$ is uniquely determined by the choice of $G_s$, but there may appear two isometry classes in the genus. This corresponds to subgroups $G_1,G_2\subset O(E_8^{\oplus 2}\oplus U^{\oplus3})$ with $G_1\cong G_2\cong G_s$ such that $$(E_8^{\oplus 2}\oplus U^{\oplus3})^{G_1}\not \cong(E_8^{\oplus 2}\oplus U^{\oplus3})^{G_2}.$$ This can also be seen in results of Hashimoto \parencite{Hashimoto_2012}.\\
       The claim in Lemma \ref{embedding} is that $\langle H \rangle$ embeds into one of the isometry classes in the given genus, i.e. there is a subgroup $G'_s$ such that $\langle H \rangle$ embeds primitively in $(E_8^{\oplus 2}\oplus U^{\oplus3})^{G'_s}$. But this does not need to hold for every subgroup of $O(E_8^{\oplus 2}\oplus U^{\oplus3})$ which is isomorphic to $G_s$.
      
\end{Remark}
\begin{proof}[Proof of Lemma \ref{embedding}] In order to prove statement a), we will prove that such a $G'_s$ exists.\\
\textit{Existence of $G'_s$-action}: We recall that $\Lambda$ denotes the Leech lattice. By Theorem \ref{Leech}, there is a subgroup $\Tilde G_s\subset O(\Lambda)$ such that $\Lambda_{\Tilde G_s}(\cong \NS_{G_s})$. It is known that $\Lambda_{\Tilde G_s}$ embeds into the K3 lattice (see for instance \parencite[proof of Theorem 1.1]{brandhorst-hashimoto}). The isometry group $\Tilde G_s$ acts faithfully on $\Lambda_{\Tilde G_s}$ and induces the identity on the discriminant group, and thus can be extended with the identity on the orthogonal complement of $\Lambda_{\Tilde G_s}$ in $E_8^{\oplus 2}\oplus U^{\oplus3}$.
Thus we may extend the action of $\Tilde G_s$ on $\Lambda_{\Tilde G_s}$ to a subgroup $G'_s\subset O(E_8^{\oplus 2}\oplus U^{\oplus3})$ which is isomorphic to $G_s$ and whose coinvariant lattice $(E_8^{\oplus 2}\oplus U^{\oplus3})_{G'_s}$ is isomorphic to the coinvariant lattice $\NS_{G_s}$. This proves a).\\
The following proves both statement b) and c).\\
 \textit{$\langle H \rangle$ is primitive in an invariant lattice:} The statement is equivalent to proposing the existence of a positive definite rank 2 lattice $T$ such that $(E_8^{\oplus 2}\oplus U^{\oplus3})^{G'_s}$ is a primitive extension of $\langle H \rangle$ and $T$. This also automatically proves part b) of the Lemma. We will construct $T$ in a way which fulfills the properties in c).\\ 
 By assumption, there exists a negative definite lattice $T'$ such that $\NS^{G_s}$
is a primitive extension of $\langle H \rangle$ and $T'$. The local genus symbols of $\NS^{G_s}$ and $(E_8^{\oplus 2}\oplus U^{\oplus3})^{G'_s}$ coincide at any prime $q \neq p$. This implies that if the positive definite rank 2 lattice $T$ with the same local genus symbols as $T'$ at every prime $q \neq p$ exists, then $(E_8^{\oplus 2}\oplus U^{\oplus3})^{G'_s}$ is a primitive extension of $T$ and $\langle H \rangle$.\\
The existence of $T$ is proven using Theorem \ref{NikulinExistenceResult}. For this, we need to prove the existence of a positive definite rank 2 lattice, such that the quadratic form on the discriminant group coincides with the one of $T'$ for every prime $q\neq p$ and $p$ does not appear in the genus symbol of $T$.\\
    For this, note that the non-unimodular part of the genus symbol of $T'$ at $p$ coincides with the non-unimodular part of the genus symbol of $\NS$ at $p$, i.e.\ is of the form $p^{-\epsilon 2}$ with $\epsilon = \left( \frac{-1}{p} \right)$. There is a rank 24 lattice with signature $(21,3)$ obtained as the gluing of the lattices $\NS[-1]$ and $T'$ along their respective $p$ subgroups. The discriminant group of this lattice has the same quadratic form which was proposed for the discriminant group of $T$. By abuse of notation, we will denote the quadratic form of the primitive extension of $\NS[-1]$ and $T'$ as $q_T$. We can see that $\sign \ q_{T} = 21-3 \equiv 2 \mod 8$. \\
    By Theorem \ref{NikulinExistenceResult}, the requirements $(1)$ and $(2)$ for the existence of $T$ are thus fulfilled. For requirements $(3)$ and $(4)$ we note that the conditions only need to hold for primes $q\neq p$ since the new quadratic form $q_{T}$ does not contain a $p$-part. Furthermore, by our assumptions for $T$ we have that $\det(T') = p^2\det(T)$. Since $p$ is a unit in $\mathbb Z_q$, the congruences in $(3)$ and $(4)$ still hold because they hold for $q_{T'}$. This implies that all the conditions in Theorem \ref{NikulinExistenceResult} are fulfilled and thus the even positive definite rank 2 lattice $T$ exists.   
\end{proof}

\begin{Lemma}\label{fromptocomplex}
Let $X$ be the superspecial $K3$ surface in characteristic $p$ and $G_s\subset \Aut_s(X)$ be a Mukai group with $p \nmid \det(\NS_{G_s})$.
    Assume that there are lattices $T'$ and $\langle H \rangle$ such that
    \begin{itemize}
        \item $\langle H \rangle$ is a positive definite rank 1 lattice with $p\nmid H^2$
        \item  $T'$ is a negative definite rank 2 lattice with $p^2\mid \det(T')$
    \end{itemize}
    and $\NS^{G_s}$ is a primitive extension of $\langle H \rangle$ and $T'$. 
    Then there exists a complex $K3$ surface $Y$ such that $G_s\cong G'_s\subset \Aut(Y)$ and $H^2(Y,\mZ)^{G'_s}$ is a primitive extension of $\langle H \rangle$ and $T(Y)$.
    
\end{Lemma}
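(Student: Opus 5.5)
Lemma \ref{embedding} does most of the lattice work. Its hypotheses hold here: $\langle H\rangle$ is positive definite of rank 1 with $p\nmid H^2$, and it embeds primitively into $\NS^{G_s}$ with orthogonal complement $T'$. So we obtain $G'_s\cong G_s$ acting on $\Lambda_{K3}\coloneqq E_8^{\oplus 2}\oplus U^{\oplus 3}$ with $(\Lambda_{K3})_{G'_s}\cong \NS_{G_s}$. We also obtain a positive definite even rank 2 lattice $T$, with $T\otimes\mathbb Z_q\cong T'\otimes \mathbb Z_q$ for all $q\neq p$, such that $\Lambda_{K3}^{G'_s}$ is a primitive extension of $\langle H\rangle$ and $T$. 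The hypothesis $p^2\mid\det(T')$ is what makes this consistent: it guarantees that all $p$-torsion of the discriminant of $\NS$ sits in $T'$, so removing it to form $T$ leaves the gluing data at $q\neq p$ unchanged. This matches the proof of Lemma \ref{embedding}, where $\det(T')=p^2\det(T)$. It remains to realize this abstract lattice configuration geometrically by a complex $K3$ surface, using the surjectivity of the period map and the global Torelli theorem.

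First, define the Hodge structure. Let $T\otimes\mathbb R$ be the positive definite plane, and let $H^{2,0}$ be one of the two isotropic lines $\omega\in T\otimes \mathbb C$, so $\omega^2=0$ and $\omega\bar\omega>0$. By surjectivity of the period map there is a marked complex $K3$ surface $Y$ with $H^2(Y,\mZ)\cong\Lambda_{K3}$ and period $\omega$. Since $T$ is rational of rank 2 and $\omega$ spans $T\otimes\mathbb C$ together with $\bar\omega$, we get $\NSY=\omega^\perp\cap\Lambda_{K3}=T^\perp$ and $T(Y)=T$. Here $T$ is primitive, being the orthogonal complement of $\langle H\rangle$ inside the primitive sublattice $\Lambda_{K3}^{G'_s}$. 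Hence $\NSY\supseteq\langle H\rangle\oplus(\Lambda_{K3})_{G'_s}$ has signature $(1,19)$ and $Y$ is projective, with Picard rank 20.

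Next, show that $G'_s$ is induced by automorphisms. Each $g\in G'_s$ acts trivially on $T=T(Y)$, so it preserves the Hodge structure and acts trivially on $H^{2,0}$. We need $g$ to preserve the Kähler (ample) cone. Here we use that $(\Lambda_{K3})_{G'_s}\cong\NS_{G_s}\cong\Lambda_{\Tilde G_s}$ is a negative definite lattice without $(-2)$-vectors, since it embeds into the Leech lattice by Corollary \ref{Leech}. Since $H^2>0$ and $H$ is $G'_s$-invariant, the standard argument gives the following. Any $(-2)$-class $\delta$ with $(\delta.H)=0$ would lie in $H^\perp\cap\NSY$, and since $\NSY\otimes\mathbb Q=\langle H\rangle\otimes\mathbb Q\oplus (\Lambda_{K3})_{G'_s}\otimes\mathbb Q$, such a $\delta$ lies in $(\Lambda_{K3})_{G'_s}$, which is impossible. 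So $H$ lies in the interior of a chamber of the positive cone. After possibly composing the marking with $-\id$ and reflections in $(-2)$-classes (which commute appropriately, or more simply: choose the marking so that $H$ is ample), $H$ is ample. Then $G'_s$ fixes the ample class $H$ and so preserves the ample cone. By the global Torelli theorem each element of $G'_s$ is induced by a unique automorphism of $Y$, and it is symplectic because it acts trivially on $T(Y)$. So $G_s\cong G'_s\subset\Aut_s(Y)$, and $H^2(Y,\mZ)^{G'_s}$ is a primitive extension of $\langle H\rangle$ and $T(Y)$.

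The main obstacle is the ampleness step. One must make sure that after adjusting by the Weyl group the marking still identifies $G'_s$ with the original action. The clean route is to avoid moving anything. Since no $(-2)$-vector is orthogonal to $H$, $H$ is in the interior of some chamber. Replacing the marking by its composition with $\pm w$, where $w$ is the element of the Weyl group carrying that chamber to the ample chamber, conjugates $G'_s$ to an isomorphic group whose invariant lattice contains the new, ample image of $H$. This preserves all isomorphism types in the statement. I would check carefully that $\pm w$ fixes $T$ (it does, since reflections in $\NSY$ act trivially on $T(Y)$, and $-\id$ preserves $T$ as a lattice).
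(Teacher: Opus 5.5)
Your proof is correct, and its skeleton matches the paper's. Lemma~\ref{embedding} supplies $T$, the absence of roots in the coinvariant lattice (via the Leech lattice) puts $H$ in the interior of a chamber, and the global Torelli theorem realizes $G'_s$ by symplectic automorphisms.

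The genuine difference is how you produce $Y$. The paper cites Shioda--Inose \parencite[Theorem 4]{Shioda_Inose_1977} to get a singular $K3$ surface with $T(Y)\cong T$, but only as abstract lattices. It then tacitly transports the $G'_s$-action by identifying $T(Y)\subset\hyz$ with the given $T\subset E_8^{\oplus 2}\oplus U^{\oplus 3}$. Justifying that identification requires the uniqueness of the primitive embedding of $T$ up to isometries of the $K3$ lattice. This does hold by Nikulin's uniqueness criterion, but the paper never states it. You instead apply surjectivity of the period map to an isotropic line of $T\otimes\mC$ for the embedding already constructed. The marking then gives $T(Y)=T$ and $\NSY=T^\perp$ on the nose, so no comparison of embeddings is needed.

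Your conjugation by $w'=\pm w$, with $w$ in the Weyl group, likewise makes precise the paper's remark ``we may assume that $H$ is ample''. Reflections in $(-2)$-classes of $\NSY$ act trivially on $T(Y)$, and $-\id$ acts by $-1$. Hence the conjugated group still acts trivially on $T(Y)$ and fixes the ample class $w'(H)$. Its invariant lattice is a primitive extension of $\langle w'(H)\rangle\cong\langle H\rangle$ and $T(Y)$, which is exactly what the statement requires.

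The paper's route is shorter and points to an explicit construction via Shioda--Inose structures. Yours is more self-contained and closes both tacit identification steps.
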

\begin{proof}
    The existence of a suitable lattice $T$ such that $(E_8^{\oplus 2}\oplus U^{\oplus3})^{G'_s}$ is a primitive extension of $T$ and $\langle H \rangle$ is proven in Lemma \ref{embedding}. It suffices to show that $T$ is the transcendental lattice of a complex $K3$ surface $Y$ and that $G'_s$ acts on $Y$. \\
    \indent \textit{Existence of $Y$:} By \parencite[Theorem 4]{Shioda_Inose_1977} $T$ is isomorphic to the transcendental lattice of a singular $K3$ surface. From now on, we will denote this transcendental lattice as $T(Y)$.

    \textit{Action on $Y$:} We know that $G'_s$ acts on the $K3$ lattice and fixes $T(Y)$. It remains to show that this action is induced by automorphisms, i.e.\ fulfills the conditions of the global Torelli theorem. For this, note that $\NSY$ is a primitive extension of $\langle H \rangle$ and $(E_8^{\oplus 2}\oplus U^{\oplus3})_{G'_s}$. The lattice $(E_8^{\oplus 2}\oplus U^{\oplus3})_{G'_s}$ embeds into the Leech lattice and thus contains no roots, i.e. elements with square $-2$. As a consequence, the divisor $H$ in $\NSY$ is contained in a chamber of the positive cone with respect to the decomposition by roots and we may assume that $H$ is ample. More details can for instance be found in \parencite[Chapter 8]{Huybrechts_2016}.
    Since $G_s$ induces the identity on $T(Y)$, fixes the ample divisor $H$ and preserves $\NS$, we can conclude from the global Torelli Theorem that $G_s$ is induced by automorphisms of $Y$.

\end{proof}
\begin{Remark}
    In the situation of Lemma \ref{fromptocomplex} we have
    \[
    [\NS^{G_s}:\langle H \rangle \oplus T' ] = [H^2(Y,\mZ)^{G'_s}: \langle H \rangle \oplus T(Y)].
    \]
\end{Remark}

\noindent We now aim to prove the decomposition of Claim \ref{Claim2}. Note that the existence of $G$ in this Claim will be proven in Corollary \ref{Alldecompositions}.

\begin{Lemma}\label{fromCtoP}
    Let Y be a complex $K3$ surface and $G'_s\subset \Aut_s(Y)$ be a Mukai group. Assume that $5\leq p\nmid \det(\hyz_{G_s})$ is a prime number such that 
    \[
    \left ( \frac{\det(T(Y))}{p}\right) = -\left ( \frac{-1}{p}\right).
    \]
  Let $\langle H \rangle$ denote the orthogonal complement of $T(Y)$ in $\hyz^{G'_s}$.
Let $X$ be the superspecial $K3$ surface in characteristic $p$. There is a subgroup $G'_s\cong G_s\subset\Aut(X)$ such that the following statements are true.
  \begin{itemize}
      \item[a)]  $\langle H \rangle$ embeds primitively into $\NS^{G_s}$
      \item[b)] Let $T'$ be the orthogonal complement of $\langle H\rangle$ in $\NS^{G_s}$. Then 
      \[T(Y)\otimes \mathbb Z_q \cong T '\otimes \mathbb Z_q \ \text{ for every prime $q\neq p$}.\]
  \end{itemize}
    \end{Lemma}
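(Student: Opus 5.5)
The plan is to reverse the construction in Lemma \ref{embedding}: starting from the complex data $\langle H\rangle \oplus T(Y) \subseteq \hyz^{G'_s}$, we build a rank 3 lattice $L'$ in the genus of $\NS^{G_s}$ as a primitive extension of $\langle H\rangle$ and $T'\coloneqq T[p][-1]$. Here $T[p][-1]$ denotes a negative definite rank 2 lattice with the same genus as $T(Y)[-1]$ away from $p$ and with $p$-part $p^{\epsilon 2}$. We then glue $L'$ with $\NS_{G_s}\cong \hyz_{G'_s}$ to obtain an even lattice in the genus of $\NS$. By uniqueness of the genus $\textrm{II}_{1,21}p^{\epsilon 2}$ (an indefinite genus of rank $\geq 3$ with small length, so Nikulin's uniqueness applies), this lattice is isometric to $\NS$.

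\textbf{Step 1: construct $T'$.} We take $T'$ to be the rescaling $T(Y)[-p]$. It is even and negative definite of rank 2, with $A_{T'}$ having $p$-part $(\mathbb Z/p)^2$ carrying the form $\frac1p\,(T(Y)[-1]\bmod p)$. Its determinant class is $\det T(Y)$, so $T'$ has $p$-adic symbol $p^{\epsilon' 2}$ with
\[
\epsilon' = \left(\tfrac{\det T(Y)}{p}\right).
\]
The hypothesis $\left(\frac{\det T(Y)}{p}\right) = -\left(\frac{-1}{p}\right)$ is exactly the condition that $\epsilon'$ matches the $p$-symbol $p^{\epsilon2}$, $\epsilon=-\left(\frac{-1}{p}\right)$, of $\NS$. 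This is the one place where the hypothesis enters, and it is the step to check most carefully: the sign conventions must be tracked, using the Lemma on negated lattices to pass from $T(Y)$ to $T(Y)[-1]$, and the determinant of a rescaled rank 2 lattice must be handled correctly. At primes $q\neq p$, $p$ is a unit in $\mathbb Z_q$. Rescaling by the unit $p$ therefore changes the $q$-adic form of $T(Y)[-1]$ only by a unit scalar, which in rank 2 does not change the isometry class up to the square class of $p$. Since this is not automatic, I would instead construct $T'$ abstractly via Theorem \ref{NikulinExistenceResult}, with discriminant form $q_{T(Y)[-1]}\oplus$ (the $p$-part of $q_\NS$), exactly mirroring the argument in Lemma \ref{embedding}. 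Conditions 1) and 2) of that theorem follow from signature bookkeeping: $\sign$ changes by the $p$-part's signature, which equals that of $q_\NS$'s $p$-part. Condition 3) at $p$ is precisely the Legendre symbol hypothesis. This yields b) by construction.

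\textbf{Step 2: glue.} Since $p\nmid H^2$ and $p\nmid\det(\hyz_{G_s})$, the gluing data between $\langle H\rangle$, $T(Y)$ and $\hyz_{G'_s}$ inside $\hyz$ lives entirely at primes $q\neq p$. These local data coincide for $T'$, because $T'\otimes\mathbb Z_q\cong T(Y)[-1]\otimes\mathbb Z_q$ up to the sign change that converts signature $(3,19)$ to $(1,21)$; more precisely, one works with the $G'_s$-invariant part negated appropriately. The same gluing subgroups and gluing maps therefore produce an even overlattice $N\supseteq \langle H\rangle\oplus T'\oplus \hyz_{G'_s}$, with $\langle H\rangle$ primitive in $N$. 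The discriminant group of $N$ is only the untouched $p$-part of $T'$, so $N$ has genus $\textrm{II}_{1,21}p^{\epsilon2}$ and hence $N\cong\NS$.

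\textbf{Step 3: transport the group action.} We transport $G'_s$, acting trivially on $\langle H\rangle\oplus T'$ and as given on the coinvariant part, to $N\cong\NS$. The action extends because $G'_s$ acts trivially on $A_{\hyz_{G'_s}}$. It acts trivially on $A_\NS$, fixes $H$, and $\NS_{G_s}$ contains no $(-2)$-vectors since it embeds into the Leech lattice. After composing with reflections of the Weyl group, we may therefore assume $H$ is ample. By the crystalline Torelli Theorem \ref{crystallinetorelli}, the trivial action on $A_\NS$ preserves the characteristic subspaces, so $G_s$ is induced by automorphisms, and these are symplectic by Theorem \ref{critsymp}. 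Finally, $\NS^{G_s}$ is the saturation of $\langle H\rangle\oplus T'$, which gives a).
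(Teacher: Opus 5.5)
Your architecture is sound, but Step~1 contains a sign error that, as written, makes the construction impossible. You prescribe the prime-to-$p$ part of $q_{T'}$ to be $q_{T(Y)[-1]}=-q_{T(Y)}$. The lattice $T(Y)[-1]$ is negative definite of rank $2$, and $\sign q_{\NS}\equiv 1-21\equiv 4 \pmod 8$. So your form has signature $-2+4\equiv 2$, while a negative definite rank $2$ lattice needs $\sign\equiv -2\equiv 6 \pmod 8$. Condition 1) of Theorem~\ref{NikulinExistenceResult} is necessary, so no such $T'$ exists. Your claim that 1) ``follows from signature bookkeeping'' is exactly where the error hides.

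The same sign would also contradict b), which requires $T'\otimes\mathbb Z_q\cong T(Y)\otimes\mathbb Z_q$ rather than $T(Y)[-1]\otimes\mathbb Z_q$. It is likewise incompatible with transporting the glue group in Step~2. The correct prescription, as in the paper, is the form of $T(Y)\oplus\NS$, i.e.\ $q_{T(Y)}\oplus q_{\NS}$. This fixes Step~1:
\begin{itemize}
\item[(i)] its signature is $2+4\equiv 6$, so condition 1) holds;
\item[(ii)] conditions 3) and 4) at $q\neq p$ are inherited from $T(Y)$, because $(-1)^{t_{(-)}}$ is unchanged and the orders differ by $p^2$;
\item[(iii)] condition 3) at $p$ reads $\left(\frac{\det T(Y)}{p}\right)=-\left(\frac{-1}{p}\right)$, which is the hypothesis.
\end{itemize}
No ``negation of the invariant part'' is needed in Step~2. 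The change from signature $(3,0)$ of $\hyz^{G'_s}$ to $(1,2)$ of $\NS^{G_s}$ is carried entirely by the $p$-part, whose signature is $4\equiv -4$. Hence the glue data of $\langle H\rangle\oplus T(Y)\oplus\hyz_{G'_s}\subset\hyz$ transfer verbatim via an isometry between $q_{T(Y)}$ and the prime-to-$p$ part of $q_{T'}$.

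In Step~3, ``$\NS_{G_s}$ has no roots, hence we may assume $H$ ample'' does not follow. $H^\perp$ may contain roots of the form $x/d+y/d$ with $x\in T'$, $y\in\NS_{G_s}$; this is what Lemma~\ref{pIsLargeEnough} and Remark~\ref{PIsSmall} are for. You do not need $H$ ample for this lemma, though. A root orthogonal to all of $L'=N^{G_s}$ would lie in the root-free lattice $N_{G_s}\cong\hyz_{G'_s}$. So a generic class of $L'$ in the positive cone avoids all root hyperplanes, and after applying an element of $W(N)\times\{\pm1\}$ it is ample and $G_s$-invariant. Crystalline Torelli then applies as you say.

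With these two repairs your argument is correct, and it differs from the paper's route. The paper takes the $G_s$-action from Ohashi and Sch\"utt and proves only the existence of $T'$ via Nikulin. It tacitly identifies the lattice glued from $\langle H\rangle$ and $T'$ with that particular $\NS^{G_s}$, a genus-versus-class point it does not address. You instead construct the action so that $\NS^{G_s}$ is the glued lattice by design. That needs only uniqueness in the genus $\mathrm{II}_{1,21}p^{\epsilon2}$ and Theorem~\ref{crystallinetorelli}, which fits the ``there is a subgroup'' formulation of the statement better.
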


\begin{proof}
Our assumptions imply that $p\nmid \det(T(Y))$ and $p\nmid H^2$.\\
    \indent \textit{Existence of $G_s$}: It is proven in \parencite[Proposition 10.1]{ohashi2024finite} that a subgroup which is isomorphic to $G'_s$ is contained in $\Aut_s(X)$. Since we assume that the determinant of $ \NS_{G_s}\cong \hyz_{G'_s}$ is not divisible by $p$, we know that $\det(\NS^{G_s})=p^2 \cdot a$ for a integer $a$ coprime to $p$. The local genus symbol at $p$ of $\NS^{G_s}$ and $\NS$ coincide.
    
   \textit{Primitive embedding of} $\langle H \rangle$: We know that $\NS_{G_s}\cong \hyz_{G'_s}$ by Theorem \ref{Leech}. Thus we may assume that $\NS^{G_s}$ and $\hyz^{G'_s}$ have the same local genus symbol at every prime $q \neq p$. 
   Similarly to the proof of Lemma \ref{embedding}, it suffices to show the existence of a negative definite rank 2 lattice $T'$ such that $\NS^{G_s}$ is a primitive extension of $\langle H \rangle$ and $T'$. Furthermore, since $p\nmid H^2$, we may conclude that $T'$, if it exists, is a rescaling of a rank 2 lattice by $p$. In particular, the local genus symbol at $p$ of $\NS$ and $T'$ need to coincide.\\
   The existence of $T'$ is proven using Theorem \ref{NikulinExistenceResult}. Explicitly, the lattice $T(Y)\oplus \NS$ has a discriminant group that is isomorphic to the one we propose for $T'$. Thus the signature mod 8 of this quadratic form is $3-21\equiv 6\mod 8$. This shows that the proposed negative definite lattice $T'$ fulfills condition (1) of Theorem \ref{NikulinExistenceResult}.\\
   Furthermore, condition (2) is clearly fulfilled. Condition (3) and (4) are also fulfilled for every prime $q\neq p$ since it is fulfilled for $T(Y)$ (since we assume that $T(Y)$ exists) and the proposed determinant of $T(Y)$ and $T'$ differ by $p^2$, which is an element of $(\mathbb Z_q^*)^2$.\\
   It remains to show that condition (3) is also fulfilled at the prime $p$. 
   Due to the genus symbol of $\NS$ at $p$, the determinant of the $p$-adic lattice $\mathbb Z_p \otimes T'$ is a square in $\mathbb Z_p$ if and only $-1$ is a square in $\mathbb Z_p$. 
   By assumption, this is true since we propose that $\det(T(Y))=\det(T')/p^2$. This is equivalent to condition (3) and we may conclude that $T'$ exists.
\end{proof}

\begin{Remark}
    In the situation of Lemma \ref{fromCtoP}, taking the notation $T$ we have
    \[
    [\NS^{G_s}:\langle H \rangle \oplus T' ] = [H^2(Y,\mZ): \langle H \rangle \oplus T(Y)].
    \]
\end{Remark}

\begin{Lemma}\label{pIsLargeEnough}
Consider the setting of Lemma \ref{fromptocomplex}.
Assume that $$p\geq2\cdot [\langle H\rangle ^\perp: T'\oplus \NS_{G_s}].$$
Then a class spanning $\langle H \rangle$ in $\NS$ is ample.
\end{Lemma}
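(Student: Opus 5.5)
We have to show that, in the setting of Lemma \ref{fromptocomplex}, a generator $H$ of $\langle H\rangle$ (with $H^2>0$) can be taken ample. We argue as in the complex case: show that no $(-2)$-class of $\NS$ is orthogonal to $H$. Then $H$ lies in the interior of a chamber of the positive cone cut out by the walls $\delta^\perp$. After applying $\pm1$ and elements of the Weyl group $W(\NS)$, which is generated by reflections in $(-2)$-classes, $H$ lies in the ample chamber. Since $\langle H\rangle$ is $G_s$-invariant and the Weyl group moves the whole configuration consistently, we may replace the $G_s$-action by its conjugate. Concretely, we compose the embedding with the Weyl group element $w$. The conjugated group still consists of automorphisms by the crystalline Torelli theorem (Theorem \ref{crystallinetorelli}), because $w$ acts trivially on $A_{\NS}$ and hence preserves the characteristic subspaces. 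So the key statement to prove is: $\langle H\rangle^\perp$ in $\NS$ contains no vector of square $-2$.

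Write $M\coloneqq\langle H\rangle^\perp\subset\NS$. This is a negative definite lattice of rank $21$ containing $T'\oplus\NS_{G_s}$ with finite index $m\coloneqq[M:T'\oplus\NS_{G_s}]$. Suppose $\delta\in M$ has $\delta^2=-2$. Then $m\delta\in T'\oplus\NS_{G_s}$, so we can write $m\delta=t+s$ with $t\in T'$ and $s\in\NS_{G_s}$. This gives
\[
-2m^2=t^2+s^2 .
\]
Both summands are non-positive. By Lemma \ref{einschränkungIndex}-type reasoning (the hypothesis $p^2\mid\det(T')$ together with primitivity), $T'=C'[p]$ for an even negative definite lattice $C'$. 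Hence every nonzero $t\in T'$ has $t^2\le -2p$. Since $p\ge 2m$ by assumption, $t\neq 0$ would give $-2m^2\le -2p\le -4m$, i.e. $m^2\ge 2m$. This is not yet a contradiction for $m\ge2$, so we need to sharpen the argument.

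Sharpening: $t$ is determined modulo $mT'$, since $\delta$ is determined modulo $T'\oplus\NS_{G_s}$ only via the glue. The cleaner route is to project $\delta$ orthogonally to $T'\otimes\mathbb Q$. The projection $\pi(\delta)$ lies in $(T')^\vee$, and its denominators are controlled: $m\,\pi(\delta)\in T'$. Moreover $-2=\delta^2\le\pi(\delta)^2$. If $\pi(\delta)\neq0$, then $\pi(\delta)^2=t^2/m^2\le -2p/m^2$. With $p\ge 2m$ this is only $\le -4/m$, which is again not sufficient. So we must use that $p\nmid m$. Since no gluing along $p$ occurs, the $p$-part of $A_{T'}$ glues nowhere, and therefore $\pi(\delta)$ has trivial image in $(A_{T'})_p$. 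Thus $\pi(\delta)\in (C')^\vee[p]$-scaled appropriately, more precisely $\pi(\delta)=p\,u$ with $u\in (C')^\vee$, and $m u\in C'$. This yields $\pi(\delta)^2=p\,u^2\le -2p/m^2$, which still needs care. The \textbf{main obstacle} is exactly this norm estimate: bounding $|\pi(\delta)^2|$ from below by something exceeding $2$ using $p\ge 2m$. I expect the correct bound to come from the fact that the glue group $M/(T'\oplus\NS_{G_s})$ injects into $A_{T'}$ of exponent dividing $m$. I would try to show $u^2\le -2/m$, using that $mu^2\in\frac{2}{m}\mathbb Z$-type integrality, which gives $\pi(\delta)^2\le -2p/m\le -4<-2$. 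This is a contradiction, so $\pi(\delta)=0$.

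In the remaining case $\pi(\delta)=0$, the root $\delta$ lies in $\NS_{G_s}\cong\Lambda_{G_s}$. This is impossible because the Leech lattice contains no $(-2)$-vectors (Corollary \ref{Leech}). Hence $M$ has no roots, and $H$ (or $-H$) can be moved to an ample class as explained above.
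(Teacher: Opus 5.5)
Your strategy matches the paper's. You reduce ampleness to the absence of $(-2)$-vectors in $\langle H\rangle^\perp$, use that $\NS_{G_s}$ is root-free and that $T'=C'[p]$, and bound the $T'$-component of a would-be root via the glue index. Your Weyl-group conjugation argument (reflections act trivially on $A_{\NS}$) is a more careful justification of ``no roots implies ample'' than the paper gives.

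\textbf{The gap.} The decisive estimate is left open, and the bound you aim for does not follow from the integrality you cite. Write $u^2$ for the $C'$-norm. From $u\in (C')^\vee$ and $mu\in C'$ you get $m\,u^2=(u.mu)\in\mathbb Z$ and $m^2u^2\in 2\mathbb Z$, so $u^2\in\frac1m\mathbb Z\cap\frac{2}{m^2}\mathbb Z$. This intersection equals $\frac2m\mathbb Z$ only when $m$ is odd. Even $m$ is not excluded, since $\det(\NS_{G_s})$ is even for every Mukai group, and then you only get $u^2\le -\frac1m$; for instance $u=e/2$ with $e^2=-2$ in $C'$ and $m=2$. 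So $\pi(\delta)^2\le -2p/m\le -4$ is unproved.

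Also, ``$\pi(\delta)=p\,u$'' is wrong as written. What holds is that the vector $\pi(\delta)$ itself lies in $(C')^\vee$: this follows from $p\,\pi(\delta)\in (C')^\vee$, $m\,\pi(\delta)\in C'$ and $\gcd(m,p)=1$. Its norm in $T'$ is $p$ times its norm in $C'$.

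\textbf{The repair.} Use the correct bound $\pi(\delta)^2=p\,u^2\le -p/m$. Since $p$ is odd, $p\ge 2m$ forces $p\ge 2m+1$. Hence $\pi(\delta)^2\le -2-\frac1m<-2=\delta^2\le\pi(\delta)^2$, a contradiction.

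Alternatively, let $\pi'$ be the projection to $\NS_{G_s}\otimes\mathbb Q$. Then $\pi(\delta)^2\le -2$ and $-2=\pi(\delta)^2+\pi'(\delta)^2$ force $\pi'(\delta)=0$. So $\delta\in T'$ by primitivity of $T'$ in $\NS$, contradicting $\delta^2\in 2p\mathbb Z$.

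This is exactly the paper's computation. It writes a glue vector as $z=\frac xd+\frac yd$ with $x^2=pda$ and $y^2=db$, where $a,b<0$. Divisibility by $d$ comes from $x/d\in (T')^\vee$, and divisibility by $p$ from the scaling. Then $z^2=\frac{pa+b}{d}<-2$, with strictness coming from $b<0$, i.e.\ the $\NS_{G_s}$-component being nonzero.
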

\begin{proof}
    In order to prove ampleness of a class spanning $\langle H \rangle$, it suffices to show that the orthogonal complement $\langle H \rangle ^\perp$ in $\NS$ does not contain an element with square $-2$ (i.e.\ it contains no roots).\\
    The lattice $\NS_{G_s}$ does not contain roots since it also embeds into the Leech lattice.
    The lattice $T'$ is a rescaling of a rank 2 negative definite lattice by $p$ and thus cannot contain any elements of square $-2$.\\
    We know that $\langle H \rangle ^\perp$ is a primitive extension of $T'$ and $\NS_{G_s}$. The elements of $\langle H \rangle ^\perp$ that are not contained in $T'\oplus \NS_{G_s}$ are of the form
    \[
    z = 
    \frac{x}{d}+\frac{y}{d}
    \]
    with $x\in T'$, $y\in \NS_{G_s}$ and an integer $d\mid [\langle H\rangle ^\perp: T'\oplus \NS_{G_s}]$ such that $x/d$ (resp. $y/d$) is contained in $\dul{(T')}$ (resp. $\dul{(\NS_{G_s})}$) and
    \[
    z^2= \frac{x^2}{d^2}+\frac{y^2}{d^2} \in 2\mathbb Z.
    \]
    We aim to get a more precise description of the value of $z^2$. For this, we note that since $x/d$ and $y/d$ are contained in the respective dual lattices, $x^2$ and $y^2$ must be divisible by $d$. Furthermore, since $T'$ is isomorphic to a lattice scaled by $p$, $x^2$ must be divisible by $p$.
    Hence we may write $x^2 = p\cdot d\cdot a$ and $y^2=d\cdot b$ for suitable integers $a,b<0$.
    We obtain
    \[
     z^2= \frac{x^2}{d^2}+\frac{y^2}{d^2} = \frac{p\cdot a+b}{d}\leq \frac{p\cdot a+b}{[\langle H\rangle ^\perp: T'\oplus \NS_{G_s}]}\stackrel{p\geq2\cdot [\langle H\rangle ^\perp: T'\oplus \NS_{G_s}]}{<}-2
    \]
    Hence, for large enough primes the primitive extension $\langle H \rangle ^\perp$ of $T'$ and $\NS_{G_s}$ does not contain any roots and thus a divisor representing a generator of $\langle H \rangle$ can be chosen to be ample.
\end{proof}

\begin{Remark}\label{PIsSmall}
    The assumption $$p\geq2\cdot [\langle H\rangle ^\perp: T'\oplus \NS_{G_s}]$$
    in Lemma \ref{pIsLargeEnough} can be removed by calculating the primitive extension $H^{\perp}$ of $T'$ and $\NS_{G_s}$ explicitly for every prime number below this bound. This leads to the conclusion that no such lattice contains a root, and thus we may assume a class that spans $\langle H \rangle$ to be ample.\\
    The associated computations were done in the computer algebra system Oscar \parencite{OSCAR}, \parencite{OSCAR-book} using code on 'Nikulin's theory on primitive embeddings' (developed by Stevell Muller).
\end{Remark}

\noindent In the following, we show that isometries of order 4 or 6 on $T'$ and $T(Y)$ extend to automorphisms of $X$ and $Y$ respectively when we restrict the index $[\NS^{G_s}:\langle H \rangle \oplus T' ] = [H^2(Y,\mZ): \langle H \rangle \oplus T(Y)]$ as in Lemma \ref{einschränkungIndex}.

\begin{Remark}
For every prime $q\neq p$, $l_q(A_{T(Y)})=2$ if and only if $l_q(A_{T'})=2$. Hence, for a suitable positive integer $a$ and positive definite rank 2 lattices $L$ and $L'$ we have $T(Y)\cong L[a]$ and $T'\cong L'[-pa]$. In particular $L$ and $L'$ have the same determinant.
\end{Remark}

\begin{Lemma}\label{indexmatches}
    Let $X$ be the superspecial $K3$ surface in characteristic $p$ and $Y$ a projective complex $K3$ surface such that $\Aut_s(Y)\supset G'_s\cong G_s \subset \Aut_s(X)$ is a Mukai group and $p\nmid \det\NS_{G_s}$. Assume that the rank 1 positive definite lattice $\langle H\rangle$ with $p\nmid H^2$ embeds primitively into $\NS^{G_s}$ and $\hyz^{G'_s}$ with orthogonal complement $T'$ and $T(Y)$ respectively, such that \[T(Y)\otimes \mathbb Z_q \cong T '\otimes \mathbb Z_q \ \text{ for every prime $q\neq p$}.\]
The following statements are true:
\begin{itemize}
    \item[1)] $O(T')$ contains an isometry of order 4 if and only if $O(T(Y))$ contains an isometry of order 4.
    \item[2)] $O(T')$ contains an isometry of order 6 if and only if $O(T(Y))$ contains an isometry of order 6.
\end{itemize}
\end{Lemma}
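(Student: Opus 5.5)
Our plan is to reduce both statements to Proposition \ref{twolattices} and the Remark preceding the lemma. Since $T(Y)$ and $T'$ agree $q$-adically for every $q\neq p$, and (by Lemma \ref{einschränkungIndex}(1) and the hypothesis $p\nmid H^2$, $p\nmid\det\NS_{G_s}$) $T'$ is the rescaling by $p$ of a lattice with no $p$ in its determinant while $p\nmid\det T(Y)$, that Remark lets us write $T(Y)\cong L[a]$ and $T'\cong L'[-pa]$ with $a>0$ and $L,L'$ positive definite rank 2 lattices of equal determinant. As $O(M[c])=O(M)$ for any nonzero $c$, it suffices to show that $O(L)$ has an element of order $4$ (resp.\ $6$) if and only if $O(L')$ does.

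By Proposition \ref{twolattices}, $O(L)$ has an element of order $4$ precisely when $L$ is a scaling of $A_1^{\oplus 2}$, i.e.\ $L\cong \mathbb Z^2$ with the form $c(x^2+y^2)$ up to the chosen normalization, and has an element of order $6$ precisely when $L$ is a scaling of $A_2$. We choose $a$ maximal, so that $L$ and $L'$ are primitive (not further divisible); this is the role of the condition $l_q(A_{T(Y)})=2\iff l_q(A_{T'})=2$. Then the order 4 case becomes: $L$ is the primitive form $x^2+y^2$ (or $2x^2+2y^2$ if evenness forces it), which has determinant $1$ (resp.\ $4$). The key observation is that these are the unique primitive positive definite (even) binary forms of their determinant having the right discriminant group. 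Concretely, by Remark \ref{latticesnutzlich}, $L_4$ is the only positive definite even rank 2 lattice of determinant $4$ with discriminant group $(\mathbb Z/2)^2$, and $L_6$ is the only positive definite even rank 2 lattice of determinant $3$.

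The argument then proceeds as follows. Suppose $T(Y)\cong L_4[a]$. Then $L$ has determinant $4$, and its discriminant group is $(\mathbb Z/2)^2$, this being read off from the $2$-adic genus of $T(Y)$. Since $T'\otimes\mathbb Z_2\cong T(Y)\otimes\mathbb Z_2$ and $p$ is odd, $L'$ has the same $2$-adic structure up to the unit scaling by $-p$. Hence $L'$ is also even, positive definite, of determinant $4$, with discriminant group $(\mathbb Z/2)^2$, so $L'\cong L_4$ and $T'\cong L_4[-pa]$. The converse direction is symmetric, and the order $6$ case is identical using determinant $3$.

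The main obstacle is bookkeeping the scalings. One must check that extracting the maximal scale factor $a$ from $T(Y)$ and $pa$ from $T'$ leaves lattices $L,L'$ that are simultaneously primitive, even, and of equal determinant. This has to be verified prime by prime. At $q\neq p$, the local isometry together with the fact that $p$ is a $q$-adic unit handles it. At $p$, we use that $T'$ is exactly $C'[p]$ with $p\nmid\det C'$, as established in Lemma \ref{einschränkungIndex}. A secondary subtlety is the parity at $q=2$, since scaling by $p$ changes the oddity. However, evenness and the group structure of the discriminant are invariant under unit scaling, so the uniqueness statements of Remark \ref{latticesnutzlich} still apply.
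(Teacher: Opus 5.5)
Your proof is correct and follows the paper's route: you write $T(Y)\cong L[a]$ and $T'\cong L'[-pa]$ with $\det L=\det L'$, transport evenness and the $2$-part of the discriminant group through the $2$-adic isometry, and conclude from the uniqueness statements in Remark \ref{latticesnutzlich}; in the order-$4$ case you justify why $A_L$ is $(\mathbb Z/2\mathbb Z)^2$ more explicitly than the paper does. Two small clean-ups: first, fix one normalization of $a$, namely the largest $a$ for which $L$ is still even, so that $L=L_4$ rather than the odd form $\mathrm{diag}(1,1)$ of determinant $1$; second, $T'\cong C'[p]$ with $p\nmid\det C'$ follows directly from $p\nmid H^2$ and $p\nmid\det\NS_{G_s}$, so you need not invoke the extension lemma, whose hypothesis $G\neq G_s$ is not part of this statement.
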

\begin{proof}
    1) This statement is equivalent to saying that $T'\cong L_{4}[-pa]$ if and only if $T(Y)\cong L_{4}[a]$ for a suitable positive integer $a$.\\
    Assume that $T'\cong L_{4}[-pa]$. Then $T(Y)=L[a]$ for a positive definite rank 2 lattice with determinant 4. But by Remark \ref{latticesnutzlich}, this implies that $L$ must be $L_{4}$. The converse follows analogously.\\
    2) This statement is equivalent to saying that $T'\cong L_6[-pa]$ if and only if $T(Y)\cong L_6[a]$ for a suitable integer $a$.\\
    Assume that $T'\cong L_6[-pa]$. Then $T(Y)=L[a]$ for a positive definite rank 2 lattice with determinant 3. Furthermore, since $L_6$ is even, $L$ must be even too (since the genus symbol at 2 is of type $\romantwo$). But by Remark \ref{latticesnutzlich}, this implies that $L$ must be $L_6$. The converse follows analogously.
\end{proof}
\begin{Remark}
    Generally, $T'$ is not a rescaling of $T(Y)$ by $-p$. This can be seen in the table in Remark \ref{l27}.
\end{Remark}

\begin{Proposition}\label{nothree}
    Let $X$ be the superspecial $K3$ surface in characteristic $p$, $G\subset\Aut(X)$ such that the symplectic part $G_s\neq G$ is a Mukai group and $p\nmid \det\NS_{G_s}$. Assume that $H $ is an ample divisor fixed by $G$ and $T'$ is the orthogonal complement of $\langle H \rangle$ in $\NS^{G_s}$. Then 
    \[
    [\NS^{G_s}:\langle H \rangle \oplus T' ] \in \{1,2\}.
    \]
\end{Proposition}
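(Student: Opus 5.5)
By Lemma \ref{einschränkungIndex}, the index $[\NS^{G_s}:\langle H\rangle\oplus T']$ lies in $\{1,2,3\}$, and index $3$ forces the non-symplectic index $n=3$. So it suffices to rule out the case where the index is $3$ and $G/G_s$ has order $3$. My plan is to transport this configuration to the complex side and then use Proposition \ref{simonhashimotoresult} 4), which says there is no non-trivial extension when $m\geq 3$. I expect the transport itself to be easy; the real content is on the complex side.

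First I set up the complex surface. By Lemma \ref{einschränkungIndex} 1), $H$ is primitive, $p\nmid H^2$, and $T'=C$ with $p^2\mid\det(T')$. So Lemma \ref{fromptocomplex} applies and yields a complex $K3$ surface $Y$ with $G'_s\cong G_s$ acting symplectically and $\hyz^{G'_s}\supseteq\langle H\rangle\oplus T(Y)$. By the remark after that lemma, the index of this sublattice is again $3$, and $T(Y)\otimes\mathbb Z_q\cong T'\otimes\mathbb Z_q$ for $q\neq p$.

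Next I produce an order-$3$ isometry on the complex side. Let $g$ generate $G$ modulo $G_s$. It acts on $T'$ with order $3$, and it fixes $H$. An order-$3$ isometry of a definite rank $2$ lattice forces $T'\cong L_6[-pa]$ (Proposition \ref{twolattices}). The argument of Lemma \ref{indexmatches} 2) then gives $T(Y)\cong L_6[a]$.

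I then want to lift to an automorphism of $Y$. I take the rotation of order $3$ on $T(Y)$ together with the identity on $\langle H\rangle$, and ask whether it extends to $\hyz^{G'_s}$. This holds if its action on the glue group $\hyz^{G'_s}/(\langle H\rangle\oplus T(Y))\cong\mathbb Z/3$, sitting inside $A_{T(Y)}$, is compatible with the identity on the $A_{\langle H\rangle}$ side. The compatibility on the $X$ side is exactly the statement that $g$ preserves $\NS^{G_s}$ and fixes $H$. Since the $3$-adic parts of $T(Y)$ and $T'$ are isometric (using $3\neq p$), the same compatibility holds for $T(Y)$.

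Granting the extension, Corollary \ref{hashimotosurjective} extends the isometry to $\hyz$. It preserves the Hodge structure because it acts on $T(Y)$ by an order-$3$ rotation, which preserves the two isotropic lines. It also fixes the ample class $H$ (ampleness as in Lemma \ref{fromptocomplex}). The global Torelli theorem then gives an extension of $G'_s$ of index $3$ on $Y$ with $m=3$, contradicting Proposition \ref{simonhashimotoresult} 4). Hence the index is $1$ or $2$.

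The main obstacle is the extension step, specifically checking that the glue action matches under the local isomorphism at $q=3$. If this transfer is awkward, my fallback is a direct lattice computation: with $T'\cong L_6[-pa]$ and index $3$, the glue vector is $\frac13(h+t)$ with $t\in T'$ fixed modulo $3T'$ by the rotation. But the rotation on $L_6$ has a fixed line modulo $3$ (its kernel element). So I would compare discriminant forms at $3$ and aim to show that the gluing contradicts the known $3$-adic genus of $\NS^{G_s}$ for each Mukai group, which may require a case check via the determinants in the table of Theorem \ref{Mukaimainresult}.
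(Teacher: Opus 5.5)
Your proposal is correct and follows essentially the same route as the paper: assume index $3$ (so $[G:G_s]=3$ and $T'\cong L_6[-pa]$), pass to a complex $Y$ via Lemma \ref{fromptocomplex} and Lemma \ref{indexmatches} with $T(Y)\cong L_6[a]$, and transfer the glue-compatibility of the order-$3$ rotation. Then extend via Corollary \ref{hashimotosurjective}, apply global Torelli, and contradict Brandhorst--Hashimoto's list (equivalently Proposition \ref{simonhashimotoresult} 4)); the paper asserts the glue-transfer step you flag at the same level of detail, and it helps to note that $3\mid p+1$ makes $-p$ a square in $\mathbb Z_3$, so the identification $L_6[-pa]\otimes\mathbb Z_3\cong L_6[a]\otimes\mathbb Z_3$ by multiplication with $\sqrt{-p}$ commutes with the rotation.
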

\begin{proof}
    By Lemma \ref{einschränkungIndex}, it suffices to show that $[\NS^{G_s}:\langle H \rangle \oplus T' ]\neq 3$. \\
    Assume the converse, i.e.\ $[\NS^{G_s}:\langle H \rangle \oplus T' ]= 3$. 
    Furthermore, there must be order 3 subgroups $H_1\subset A_{T'}$ and $H_2\subset A_{\langle H \rangle}$ with which we identify $\NS^{G_s}$ as a primitive extension of $\langle H \rangle$ and $T'$.\\
    Since we assume that $G\neq G_s$, the index $[G:G_s]=3$ (Lemma \ref{einschränkungIndex}).
    By Lemma \ref{twolattices}, we know that $T'\cong L_6[-pa]$ for a suitable $a$. 
    By our assumption, an order 3 isometry $h$ in $O(L_6[-pa])$ must extend to an  isometry in $O(\NS^{G_s})$ with the identity on $\langle H\rangle$. In particular, $h$ must map $H_1$ to itself and the restriction of $h$ to $H_1$ must coincide with the identity on $H_2$.\\
    By Lemma \ref{fromptocomplex}, for a suitable complex $K3$ surface $Y$ with $G_s\cong G'_s \subset \Aut_s(Y)$ we may assume that $\langle H\rangle$ is the orthogonal complement of $T(Y)$ in $\hyz^{G'_s}$. Furthermore, we know that $T(Y)\cong L_6[a]$ by Lemma \ref{indexmatches}. Since $T(Y)$ and $T'$ only differ by scaling, we know that $O(T')\cong O(T(Y))$.  There exists an embedding $\psi: H_1 \hookrightarrow A_{T(Y)}$ and $\hyz^{G'_s}$ is a primitive extension of $\langle H \rangle$ and $T(Y)$ associated to the subgroups $\psi(H_1)$ and $H_2$. Due to our assumptions, this implies that there is an order 3 isometry $h'$ in $O(T(Y))$ such that $h'$ induces the identity on $\psi(H_1)$, which can thus be extended to an isometry of $\hyz^{G_s}$ and consequently $g'$ of $\hyz$ by Corollary \ref{hashimotosurjective}.\\
    We claim that $g'$ is induced by an automorphism. It is clear that $g'$ preserves $\NSY$ and also fixes an ample divisor. Since an ample divisor is preserved, the isometry must thus preserve the Kähler cone. It remains to check that $g'$ preserves the Hodge structure, i.e.\ preserves $H^{2,0}$ and $H^{0,2}$. For this we note that $H^{2,0}$ and $H^{0,2}$ can be identified with the isotropic subspaces of $T(Y)$. One can check that the order 3 isometry $h'$ on $T(Y)$ preserve the isotropic subspaces of $T(Y)$ and thus $g'$ must preserve the Hodge structure and is induced by an automorphism.\\
    Since $T(Y)$ is the transcendental lattice associated to a complex $K3$ with a non-trivial extension of $G'_s$, $T(Y)$ must appear in the list of Brandhorst and Hashimoto \parencite[Section 6]{brandhorst-hashimoto} with a gluing along 3. But such a gluing does not occur, which leads to a contradiction.
\end{proof}

\noindent In the following Propositions, we will prove that some isometries of $T'$ will induce non-trivial extensions of $G_s$ in $\Aut(X)$.
\begin{Proposition}\label{index4}
    Let $Y$ be a complex $K3$ surface and $G'_s\subset \Aut_s(Y)$ a Mukai group. Assume that there is a finite group $G'\subset \Aut(Y)$ with symplectic subgroup $G'_s$ and non-symplectic index 4. \\
    Let $X$ be the superspecial $K3$ surface in characteristic $p\equiv 3 \mod 4$. Then there is a subgroup $G\subset \Aut(X)$ with non-symplectic index 4 and symplectic subgroup $G_s\cong G'_s$. 
 
\end{Proposition}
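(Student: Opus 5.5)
We transport the complex configuration to $X$ with Lemma \ref{fromCtoP}, then build the order $4$ isometry directly on $\NS$ and check the hypotheses of the crystalline Torelli theorem (Theorem \ref{crystallinetorelli}).

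\textbf{Step 1: the complex input.} Let $H'$ be an ample class fixed by $G'$ and set $T(Y)=\langle H'\rangle^\perp$ in $\hyz^{G'_s}$. By Remark \ref{latticesnutzlich} we have $T(Y)\cong L_4[a]$. By Proposition \ref{simonhashimotoresult} the index $m=[\hyz^{G'_s}:\langle H'\rangle\oplus T(Y)]$ is $1$ or $2$. Then $\det T(Y)=4a^2$ is a nonzero square mod $p$ whenever $p\nmid a$. Since $p\equiv 3\bmod 4$ gives $-\left(\frac{-1}{p}\right)=1$, condition (\ref{conditionsupsingred}) holds. We also need $p\nmid\det(\NS_{G_s})$ and $p\nmid a$. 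For $p>11$ I expect both to follow from the finite list in \parencite[Section 6]{brandhorst-hashimoto}, where the relevant $a$ only involve the primes $2,3,5,7$. Small primes would be handled case by case, and I will state this as a hypothesis inherited from the surrounding section.

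\textbf{Step 2: transfer to $X$.} Lemma \ref{fromCtoP} gives $G_s\subset\Aut_s(X)$ and a primitive embedding of $\langle H'\rangle$ into $\NS^{G_s}$, with complement $T'$ locally isometric to $T(Y)$ away from $p$. By Lemma \ref{indexmatches} we get $T'\cong L_4[-pa]$, so $T'$ has an order $4$ isometry $r$. The gluing of $\NS^{G_s}$ from $\langle H\rangle$ and $T'$ has the same index $m\le 2$ and lives in the $2$-parts of the discriminant groups. The $2$-parts of $A_{T'}$ and $A_{T(Y)}$ are isometric, and $r$ acts on them as the complex order $4$ isometry does. Since the complex $r$ extends by the identity on $\langle H'\rangle$ (as the complex extension exists), the same holds over $X$. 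So $r\oplus\id_{\langle H\rangle}$ extends to an isometry of $\NS^{G_s}$. Theorem \ref{Hashimotosurj} lets us extend it further to some $g\in O(\NS)$ preserving $\NS_{G_s}$ and normalizing $G_s$. There is no gluing along $p$ (Lemma \ref{einschränkungIndex}), so the $p$-part of $A_{\NS}$ is exactly the $p$-part of $A_{T'}$, and $g_A$ acts there through $r$.

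\textbf{Step 3: the crystalline Torelli conditions.} By Lemma \ref{pIsLargeEnough} and Remark \ref{PIsSmall}, $H$ can be taken ample; $g$ fixes $H$ and hence preserves the ample cone. The main obstacle is showing that $g_A\otimes k$ preserves the two strictly characteristic subspaces $C_1,C_2$. On $(A_{\NS})_p\cong A_{T'}\otimes\mathbb F_p\cong\mathbb F_p^2$, the quadratic form is $-pa\cdot(x^2+y^2)$ up to a unit, and $r$ acts as $(x,y)\mapsto(-y,x)$. Its eigenvectors over $\bar{\mathbb F}_p$ are $(1,\mp i)$. These are isotropic and not defined over $\mathbb F_p$ because $-1$ is a nonsquare. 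Hence they are exactly the two isotropic lines $C_1,C_2$, swapped by Frobenius, and $g_A$ preserves each of them. Since the image of $g$ in $O(A_{\NS})$ is this order $4$ rotation, $g$ is non-symplectic of exact order $4$ modulo $G_s$ by Theorem \ref{critsymp}. Its powers $g^2$ and $g^3$ likewise act nontrivially on $A_{\NS}$, so none of them is symplectic.

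\textbf{Step 4: conclusion.} Theorem \ref{crystallinetorelli} shows that $g$ is induced by an automorphism. Set $G=\langle g,G_s\rangle$. It fixes the ample class $H$, so it is finite by Lemma \ref{usefulProperties} c). Its symplectic subgroup is $G_s$, because $G_s$ is saturated and $g$ normalizes $G_s$. Therefore $[G:G_s]=4$.
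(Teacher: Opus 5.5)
Your proposal is correct. Steps 1, 2 and 4 run parallel to the paper's proof: you transfer $\langle H\rangle$ via Lemma \ref{fromCtoP}, identify $T'\cong L_4[-pa]$, extend the rotation by the identity on $\langle H\rangle$ across a glue of index at most $2$, reach $\NS$ with Theorem \ref{Hashimotosurj}, and get ampleness from Lemma \ref{pIsLargeEnough}. In Step 2 the paper is more explicit than you. It notes that the glue must be the unique $r$-fixed order-$2$ subgroup $\langle \tfrac12(x+y)\rangle$, and that its discriminant value is unchanged by the rescaling. Pinning this down is the clean way to justify your sentence that $r$ acts on the $2$-parts ``as the complex isometry does''.

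The genuine difference is how you verify the strictly-characteristic-subspace condition of Theorem \ref{crystallinetorelli}.
\begin{itemize}
\item The paper never computes it. It observes that $g_A$ is just the rotation on the $p$-part of $A_{L_4[-pa]}$, hence independent of $a$, $H$ and $G_s$. It then certifies that rotation with one geometric model: the index-$4$ extension of $F_{384}$ on the Fermat quartic, which has good reduction away from $2$ and superspecial reduction for $p\equiv 3 \bmod 4$.
\item You check it directly. The map $(x,y)\mapsto(-y,x)$ has eigenlines $(1,\pm i)$. These are the two isotropic lines of the anisotropic binary form, hence exactly $C_1,C_2$, and $g_A\otimes k$ fixes each.
\end{itemize}
Your route is self-contained. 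It needs no model and no reduction criterion, and no ``independence of $a$'' step to carry the property from the model over to the lattice you built. It also shows exactly where $p\equiv 3\bmod 4$ enters (anisotropy, so $i\notin\mathbb F_p$). Finally, it gives $[G:G_s]=4$ at once from the order of $g_A$ and Theorem \ref{critsymp}. The paper's route buys an explicit geometric realization of the extension.

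Two small corrections.
\begin{itemize}
\item The absence of gluing along $p$ follows immediately from $p\nmid H^2$ and $p\nmid\det\NS_{G_s}$. It should not be drawn from a lemma that presupposes a non-symplectic $G$.
\item You need not claim that $g$ normalizes $G_s$. Every element of $\langle g,G_s\rangle$ acts on $\NS^{G_s}$ through a power of $r\oplus\mathrm{id}$. The symplectic ones act trivially on $A_{\NS}\cong (A_{T'})_p$, so they fix $\NS^{G_s}$ pointwise, and saturation then identifies the symplectic subgroup with $G_s$.
\end{itemize}
Also, the discriminant form on the $p$-part is $-\tfrac{2a}{p}(x^2+y^2)$ rather than $-pa(x^2+y^2)$, but only its proportionality to $x^2+y^2$ is used.
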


\begin{proof}
    Using Proposition \ref{simonhashimotoresult}, we may conclude that $T(Y)$ is a scaling of $L_{4}$. By Lemma \ref{fromCtoP}, the sublattice $\langle H \rangle$ spanned by the primitive ample divisor $H$ which is fixed by $G'$ on $Y$ embeds into $\NS^{G_s} $ for the superspecial $K3$ surface in characteristic $p\equiv 3\mod 4$. 
    By Lemma \ref{pIsLargeEnough} and Remark \ref{PIsSmall} we may assume that the embedding of $\langle H \rangle$ in $\NS$ contains an ample class.\\
    By the proof of Lemma \ref{indexmatches}, we see that the orthogonal complement $T'$ of $\langle H \rangle$ in $\NS^{G_s}$ is isomorphic to $T(Y)[-p]$. Denote the order 4 automorphism of $T'$ as $_{L_{4}[-pa]}$. 
    \begin{Claim}
        The isometry $g_{L_{4}[-pa]}$ can be extended to an isometry $g_{\NS}$ of $\NS$ which fixes the ample classes in $\langle H \rangle$.
    \end{Claim}
    \begin{proof}[Proof of Claim]
    We obtain from Proposition \ref{nothree} that \[
    [\NS^{G_s}:\langle H \rangle \oplus T']=[\hyz^{G'_s}:\langle H \rangle \oplus T(Y)]\in \{1,2 \}.
    \]   
    By the assumption that $G'_s$ has a non-trivial extension with non-symplectic index 4 on $Y$, we know that the order 4 isometry on $T(Y)$ can be extended with the identity on $\langle H \rangle$ to an isometry $g_{L_{D_4[a]}}$ of $\hyz^{G'_s}$ (and thus of $\hyz$ by Corollary \ref{hashimotosurjective}). In the case where $[\hyz^{G'_s}:\langle H \rangle \oplus T(Y)]=2$, extending the isometry to $\hyz^{G'_s}$ implies that $g_{L_{D_4[a]}}$ induces the identity on the subgroup which we glue along.
    Let $x,y$ be two generators of $T(Y)$ such that the induced Gram matrix is of the form
    \[
    \begin{pmatrix}
        2a & 0 \\0& 2a
    \end{pmatrix}.    \]    
    One can calculate that the only order 2 subgroup $\Gamma$ which is fixed by $g_{L_{D_4[a]}}$ can be represented by the two equivalence classes $\{0,(1/2)x+(1/2)y \}$. When rescaling $T(Y)$ with $-p$, we do not change the quadratic form of this subgroup since $p\equiv 3 \mod 4$. Hence, we may also extend the isometry $g_{L_{4}[-pa]}$ to an isometry of $\NS^{G_s}$ by gluing with the identity on $\langle H \rangle$ along $\Gamma$. In the case where no gluing occurs, we may always extend with the identity on $\langle H \rangle$ and obtain an order 4 isometry of $\NS^{G_s}$ which we may denote as $g_{\NS^{G_s}}$. \\
    When gluing $\NS^{G_s}$ and $\NS_{G_s}$ into $\NS$, we glue along the largest subgroup of $\discgr{\NS^{G_s}}$ of order coprime to $p$ and the full discriminant group of $\NS_{G_s}$. Due to Theorem \ref{Hashimotosurj}, there is an isometry of $\NS_{G_s}$ with which we can extend $g_{\NS^{G_s}}$ to $\NS$.
    \end{proof}   
    \begin{Claim}
        The isometry $g_\NS$ is induced by an automorphism of $X$.
    \end{Claim}
    \begin{proof}[Proof of Claim]
         By our construction, $g_{\NS}$ preserves the ample class spanning $\langle H \rangle$. Furthermore, since $G_\NS$ preserves an ample class, the isometry must also preserve the ample cone since chambers of the Weyl decomposition of the positive cone must be mapped to chambers. Since the ample cone is a chamber of this decomposition, the isometry must preserve the ample cone.
         So, it remains to check that $(g_\NS)_A\otimes k$ preserves the strictly characteristic subspaces, as is needed to apply the crystalline Torelli theorem.\\
    For this, we note that the induced action of $g_\NS$ on the discriminant group coincides with the action of  $g_{L_{4}[-pa]}$ on the $p$-part of the discriminant group of $A_{L_{4}[-pa]}$ since no other sublattice in the decomposition has a discriminant group with a non-trivial $p$-part and thus also no gluing appears. \\
    Consequently, the property of $(g_\NS)_A\otimes k$ preserving the characteristic subspaces only depends on the restriction of $g_A$ to the $p$-Sylow group of $A_{L_4[-pa]}$. This action is independent of the choice of $a$ since $l_p(L_4[-pa])= \text{rk}\ L_4[-pa]$. Furthermore, $\langle H \rangle $ and $G_s$ also do not influence this action. \\
    In order to check that $(g_\NS)_A\otimes k$ preserves the characteristic subspace, it thus suffices to show that for every $p\equiv 3 \mod 4$ that there is a K3 surface $Y$ with superspecial reduction mod $p\equiv 3 \mod 4$, a Mukai group $G'_s\Aut_s(Y)$ such that there is an extension $G'\subset \Aut(Y)$ with non-symplectic index 4 which also acts on the superspecial reduction.\\
    Model \parencite[80 a)]{brandhorst-hashimoto}: An action of $F_{384}$ with an index 4 extension is given on the Fermat quartic 
\[
V(x^4+y^4+z^4+w^4) \subset \mathbb P^3_{\mathbb C}.
\]
This surface has good reduction for $p \neq 2$. The action of $F_{384}$ is shown in \parencite[no. 5]{Mukai1988}. The group action of the index 4 extension of $F_{384}$ is generated by the permutation of coordinates and projective automorphisms of the form 
\[
(x:y:z:w) \mapsto(i^ax:i^by:i^cz:i^dw)
\]
where $i$ is a fourth root of unity and $1\leq a,b,c,d \leq4 $. These automorphisms also generate a group with the same structure in any characteristic $p\neq 2$.\\
The surface is isomorphic to the superspecial $K3$ surface when reducing at a prime $p\equiv 3 \mod 4$, as can for instance be shown using \parencite[Proposition 4.1]{Schütt_2016}.
    \end{proof}
\noindent This proves that $g_\NS$ is induced by an automorphism with non-symplectic index 4. Additionally, $g_\NS$ fixes an ample divisor which also fixed by $G_s$, and thus
\end{proof}

\begin{Proposition}\label{index6}

    Let $Y$ be a complex $K3$ surface and $G'_s\subset \Aut_s(Y)$ a Mukai group. Assume that there is a finite group $G'\subset \Aut(Y)$ with symplectic subgroup $G'_s$ and non-symplectic index 6. \\
    Let $X$ be the superspecial $K3$ surface in characteristic $p\equiv 5 \mod 6$. Then there is a subgroup $G\subset \Aut(X)$ with non-symplectic index 6 and symplectic subgroup $G_s\cong G'_s$.

\end{Proposition}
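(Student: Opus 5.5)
The proof follows the structure of Proposition~\ref{index4}, with $L_6$ in place of $L_4$ and $p\equiv 5\mod 6$ in place of $p\equiv 3 \mod 4$.

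\emph{Step 1: the complex data.} By Proposition~\ref{simonhashimotoresult}, an index 6 extension on $Y$ forces $T(Y)\cong L_6[a]$ and $m=[\hyz^{G'_s}:\langle H\rangle\oplus T(Y)]=1$; this also matches Lemma~\ref{einschränkungIndex}. We check condition (\ref{conditionsupsingred}). Here $\det T(Y)=3a^2$, so $\left(\frac{\det T(Y)}{p}\right)=\left(\frac{3}{p}\right)$. Quadratic reciprocity gives $\left(\frac{-3}{p}\right)=-1$ exactly when $p\equiv 2\mod 3$, which for odd $p$ is $p\equiv 5\mod 6$. This is equivalent to $\left(\frac{3}{p}\right)=-\left(\frac{-1}{p}\right)$. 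Since $p\nmid 3a$ (by $p\nmid\det\NS_{G_s}$ and, if needed, by noting that $a$ is supported on primes dividing $\det\NS_{G_s}$), Lemma~\ref{fromCtoP} applies. It gives a primitive embedding $\langle H\rangle\hookrightarrow\NS^{G_s}$ whose complement $T'$ agrees with $T(Y)$ locally away from $p$. By Lemma~\ref{indexmatches}, $T'\cong L_6[-pa]$. By Lemma~\ref{pIsLargeEnough} and Remark~\ref{PIsSmall}, $H$ may be taken ample.

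\emph{Step 2: extending the isometry.} Let $g_{T'}$ be an order 6 isometry of $T'$. By Proposition~\ref{nothree} and the index table, $[\NS^{G_s}:\langle H\rangle\oplus T']=m=1$, so $\NS^{G_s}=\langle H\rangle\oplus T'$ and no gluing condition arises. We extend $g_{T'}$ by the identity on $\langle H\rangle$ to obtain $g_{\NS^{G_s}}$. Next we glue $\NS^{G_s}$ with $\NS_{G_s}$. This gluing runs only along the prime-to-$p$ part of $A_{\NS^{G_s}}$, which here is $A_{\langle H\rangle}$ together with the prime-to-$p$ part of $A_{T'}$. By Theorem~\ref{Hashimotosurj} there is an isometry of $\NS_{G_s}$ compatible with the induced action on the glue, so $g_{\NS^{G_s}}$ extends to $g_\NS\in O(\NS)$. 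The element $g_\NS$ normalizes $G_s$, because it preserves $\NS_{G_s}$ and $G_s$ is the kernel of $O(\NS_{G_s})\to O(q)$ restricted appropriately. Together with $G_s$ it generates a finite group fixing $H$.

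\emph{Step 3: crystalline Torelli.} The map $g_\NS$ fixes the ample class $H$, so it preserves the Weyl chamber containing it, which is the ample cone. The remaining and main obstacle is showing that $(g_\NS)_A\otimes k$ preserves the two strictly characteristic subspaces. As in Proposition~\ref{index4}, the action on $A_\NS\cong(\mathbb Z/p)^2$ is the action of $g_{T'}$ on the $p$-part of $A_{L_6[-pa]}$, which is $L_6/pL_6$. This action is independent of $a$, of $H$, and of $G_s$.

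It therefore suffices to exhibit one model. The first option is an explicit check: the order 6 rotation acts on $L_6\otimes\mathbb F_p$. For $p\equiv 2\mod 3$ the form is anisotropic, and its eigenvalues $-\omega,-\omega^2$ lie in $\mathbb F_{p^2}\setminus\mathbb F_p$, so the isotropic eigenlines over $k$ are exactly the two Frobenius-conjugate lines not meeting $A_\NS$. Since there are exactly two strictly characteristic subspaces $C_1,C_2$ with $f(C_1)=C_2$, these must be the eigenlines, and $g$ preserves each of them. The backup option is a geometric model from \parencite{brandhorst-hashimoto}, for example $T_{48}$ or $M_9$ with an index 6 extension on a surface having good superspecial reduction at $p\equiv 5\mod 6$, such as a Fermat-type sextic double plane or the Fermat cubic-related surface. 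For such a model the reduced automorphism witnesses the required action.

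The crystalline Torelli Theorem~\ref{crystallinetorelli} then shows that $g_\NS$ is induced by an automorphism. It acts nontrivially on $A_\NS$ with order 6, so by Theorem~\ref{critsymp} the group $G=\langle g_\NS,G_s\rangle$ has non-symplectic index 6.
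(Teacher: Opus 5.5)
Your proof is correct. Its skeleton matches the paper's, which just says the argument is analogous to Proposition~\ref{index4}: embed $\langle H\rangle$ via Lemma~\ref{fromCtoP}, identify $T'\cong L_6[-pa]$, extend the order-$6$ rotation by the identity on $\langle H\rangle$ and across $\NS_{G_s}$ using Hashimoto's surjectivity, and reduce the crystalline Torelli condition to the action on the $p$-part $L_6/pL_6 \cong A_{\NS}$.

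You genuinely differ in that last check, which is the only real content of the paper's proof. The paper settles it geometrically. It takes Brandhorst--Hashimoto's model of $M_9$ with an index $6$ extension on a sextic double plane, which has supersingular reduction for $p\equiv 5\bmod 6$, and lets the reduced automorphism witness that such an action preserves $C_1$ and $C_2$. You check it directly. Since $x^2-x+1$ is irreducible over $\mathbb F_p$ for $p\equiv 2\bmod 3$, the two eigenlines over $k$ are isotropic (because $\lambda^2\neq 1$), Frobenius-conjugate and free of nonzero $\mathbb F_p$-points, so they are $C_1$ and $C_2$.

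Your version is self-contained. It avoids the paper's tacit identification of the reduced automorphism's action on $A_{\NS}$ with that of $g_{T'}$. It also shows more: every determinant-one isometry of $A_{\NS}$ preserves both strictly characteristic subspaces, and only reflections swap them. That would equally remove the need for the Fermat quartic model in Proposition~\ref{index4}. The paper's version buys an explicit equation realizing the extension; your ``backup option'' is exactly its proof. You also justify that $g_{\NS}$ normalizes $G_s$ and that the index is exactly $6$, which the paper leaves implicit.

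Two small citation slips, both inherited from the paper's proof of Proposition~\ref{index4}:
\begin{itemize}
\item $T(Y)\cong L_6[a]$ and $m=1$ follow from Proposition~\ref{twolattices} and from Proposition~\ref{simontommy} applied to $g^2$ and $g^3$, not from Proposition~\ref{simonhashimotoresult}.
\item $[\NS^{G_s}:\langle H\rangle\oplus T']=1$ should come from the determinant count $|\det\NS^{G_s}|=p^2|\det\NS_{G_s}|=p^2\cdot H^2\cdot 3a^2$. It should not come from Proposition~\ref{nothree}, which presupposes the group $G$ you are constructing.
\end{itemize}
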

\begin{proof}
  
Due to an analogous argument to Proposition \ref{index4}, it suffices to prove that there exists a complex $K3$ surface with an action of a Mukai group, such that this group has an index 6 extension, the $K3$ surface has supersingular reduction and inherits the action of the index 6 extension. For this, we consider the following model from \parencite{brandhorst-hashimoto}:\\
    Model \parencite[63 a)]{brandhorst-hashimoto}: The group $M_9$ acts with an index 6 extension on the complex $K3$ surface defined by
\[
V(x^6+y^6+z^6-10(x^3y^3+y^3z^3+z^3x^3) = (w')^2) \subset \mathbb P(1,1,1,3)_\mathbb C. 
\]
This surface has good reduction for $p \neq 2,3$.\\
The action of $M_9$ is described in \parencite[no. 10]{Mukai1988}. The group action of the index 6 extension of $M_9$ is generated by the matrices
\begin{align*}
   & \begin{pmatrix}
    1 & &&\\ & w&& \\ & & w^2&\\&&&1
\end{pmatrix},
\begin{pmatrix}
    & 1& &\\ &&1&\\1&&&\\&&&1
\end{pmatrix},
\frac{1}{w-w^2}\begin{pmatrix}
    1 & 1&1&\\1 &w&w^2&\\1&w^2&w&\\&&&w-w^2
\end{pmatrix},\\
&\frac{1}{w-w^2}\begin{pmatrix}
    1 & w&w&\\w^2 &w&w^2&\\w^2&w^2&w&\\&&&w-w^2
\end{pmatrix},\begin{pmatrix}
    w & & & \\ & 1& & \\ & & 1&\\
    &&&-1
\end{pmatrix}.
\end{align*}
Here, $w$ is a third root of unity. These automorphisms also generate a group of the same structure over fields of characteristic not equal to two or three.\\
The surface has supersingular reduction for $p \equiv 5 \mod 6$, as can for instance be seen by \parencite[Proposition 4.1]{Schütt_2016}.
\end{proof}
\begin{Proposition}\label{index2}
    Let $X$ be the superspecial $K3$ surface in characteristic $p$. Assume that $G_s\subset\Aut_s(X)$ is a Mukai group such that $p\nmid \NS_{G_s}$ and $\NS^{G_s}$ is a primitive extension of $\langle H \rangle \oplus T'$ for a suitable negative definite lattice $T'$
    with $l_p(T')=2$ and $[\NS^{G_s}:(\langle H\rangle \oplus  T')] \in \{1,2 \}$. Then there exists a group $G\subset \Aut(X)$ with symplectic subgroup $G_s$ such that $[G:G_s]=2$.
\end{Proposition}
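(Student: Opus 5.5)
The plan is to construct the order two isometry explicitly and then verify the two hypotheses of the crystalline Torelli theorem (Theorem \ref{crystallinetorelli}). Take $g_{T'} = -\id_{T'}$ on $T'$ and the identity on $\langle H\rangle$. This candidate is natural: $-\id$ commutes with every isometry of $T'$, and it acts on $A_{T'}$ as $-1$. First I extend $\id_{\langle H\rangle}\oplus(-\id_{T'})$ to $\NS^{G_s}$. If the index $[\NS^{G_s}:\langle H\rangle\oplus T']$ is $1$, nothing needs checking. If it is $2$, the gluing subgroup $H_1\subset A_{T'}$ has order $2$, and $-\id$ acts trivially on it, since $-x\equiv x$ for $2$-torsion. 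It therefore agrees via the gluing map with the identity on $H_2\subset A_{\langle H\rangle}$. The extension exists by the criterion recalled in the subsection on primitive extensions. Call the result $g_{\NS^{G_s}}$. It fixes $H$ and is not the identity.

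Next I extend $g_{\NS^{G_s}}$ to $\NS$. As in the proof of Proposition \ref{index4}, $\NS$ is a primitive extension of $\NS^{G_s}$ and $\NS_{G_s}$, glued along the prime-to-$p$ part of $A_{\NS^{G_s}}$ and all of $A_{\NS_{G_s}}$, because $p\nmid\det\NS_{G_s}$. The induced action of $g_{\NS^{G_s}}$ on the glued subgroup is an automorphism of the quadratic form. Transporting it through the gluing gives an element of $O(q_{\NS_{G_s}})$, and Theorem \ref{Hashimotosurj} (via $\NS_{G_s}\cong\Lambda_{G_s}$, Corollary \ref{Leech}) lifts it to an isometry of $\NS_{G_s}$. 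Gluing yields $g_\NS\in O(\NS)$. It normalizes $G_s$ because it preserves $\NS_{G_s}$ and $\NS^{G_s}$; if needed, one checks this via conjugation acting on the coinvariant lattice, where the kernel of $O(\Lambda_{G_s})\to O(q)$ is exactly $G_s$, which is the Höhn--Mason convention.

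Then I verify that $g_\NS$ comes from an automorphism. For the ample cone, $H$ is ample by Lemma \ref{pIsLargeEnough} and Remark \ref{PIsSmall}. Since $g_\NS$ fixes $H$, it maps the Weyl chamber containing $H$ to itself. For the characteristic subspaces, the $p$-part of $A_\NS$ comes entirely from $A_{T'}$, with no gluing along $p$. Hence $(g_\NS)_A$ acts on $A_\NS\cong(\mathbb Z/p)^2$ as $-\id$. Scalar multiplication by $-1$ preserves every subspace of $A_\NS\otimes k$, in particular both strictly characteristic subspaces $C_1$ and $C_2$. Theorem \ref{crystallinetorelli} then gives an automorphism $\alpha$. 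It acts nontrivially on $A_\NS$, so by Theorem \ref{critsymp} it is non-symplectic.

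Finally set $G=\langle G_s,\alpha\rangle$. It is finite by Lemma \ref{usefulProperties} c), since $H$ is an ample class fixed by all of $G$. Its symplectic subgroup is $G_s$, because $G_s$ is normal in $G$ and elements of $\alpha G_s$ act as $-1$ on $A_\NS$. The quotient $G/G_s$ acts faithfully on $T'$ as $\pm\id$. Possibly after multiplying $\alpha$ by an element of $G_s$, we get $[G:G_s]=2$.

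I expect the main obstacle to be the extension to $\NS$, in particular making sure the lifted isometry on $\NS_{G_s}$ makes $\alpha$ normalize the given $G_s$ rather than a conjugate of it. I would handle this by choosing the lift through Hashimoto's surjectivity and noting that $G_s$ is the full kernel of $O(\NS_{G_s})\to O(q_{\NS_{G_s}})$, which is a normal subgroup of $O(\NS_{G_s})$.
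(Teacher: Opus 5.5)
Your proposal is correct and follows the paper's own argument. Both take $\id_{\langle H\rangle}\oplus(-\id_{T'})$, extend it to $\NS^{G_s}$, lift it to $\NS$ via Hashimoto's surjectivity (the glue covers all of $A_{\NS_{G_s}}$), note that the induced action on $A_\NS\cong(\mathbb Z/p\mathbb Z)^2$ is $-\id$ so both strictly characteristic subspaces are preserved, and conclude by the crystalline Torelli theorem.

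You also make explicit several points the paper's proof leaves implicit:
\begin{itemize}
\item compatibility of the extension on the order-$2$ glue group;
\item the ample-cone condition, via Lemma \ref{pIsLargeEnough} and Remark \ref{PIsSmall};
\item that $G_s$, as the pointwise stabilizer of $\NS^{G_s}$, is normalized by the new involution, which gives $[G:G_s]=2$.
\end{itemize}
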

\begin{proof}
The isometry $-\id \in O(T')$ can be extended to an involution on $\NS^{G_s}$ by taking the identity on $\langle H \rangle$. When considering \NS \ as the primitive extension of $\NS^{G_s}$ and $\NS_{G_s}$, we glue along the full discriminant group of $\NS_{G_s}$. Thus, the isometry on $\NS_{G_s}$ can be extended to an isometry $g$ on $\NS$ by Theorem \ref{hashimotosurjective}. Due to our assumptions, $p\nmid H^2$ and thus every element of $T'$ is not dually primitive in $\NS$. Since we induce $-\id$ on $A_{T'}$, we may conclude that this action induces $-\id$ on $A_\NS$. In particular, this action preserves the characteristic subspace and $g$ is induced by an automorphism. 
\end{proof}

\begin{Corollary}\label{Alldecompositions}
    Let $X$ be the superspecial $K3$ surface in characteristic $p$ and $Y$ a complex $K3$ surface such that $\Aut_s(Y)\supset G'_s\cong G_s \subset \Aut_s(X)$ is a Mukai group and $p\nmid \det\NS_{G_s}$. Assume that there is a rank 1 positive definite lattice $\langle H\rangle$ such that 
    \begin{itemize}
        \item[-]  $p\nmid H^2$
        \item[-] $\langle H \rangle $ embeds primitively into $\NS^{G_s}$ and $\hyz^{G'_s}$ with orthogonal complement $T'$ and $T(Y)$ respectively
        \item[-] $T(Y)\otimes \mathbb Z_q \cong T '\otimes \mathbb Z_q \ \text{ for every prime $q\neq p$}$
    \end{itemize}
    \noindent Then the following statements are true:
    \begin{itemize}
        \item[1)] Assume that $T(Y)$ (and thus $T'$, \ref{indexmatches}) is a scaling of $L_6$ and 
        \[
    [\NS^{G_s}:\langle H \rangle \oplus T' ] = [H^2(Y,\mZ): \langle H \rangle \oplus T(Y)]=1.
    \] 
    Then there are extensions $G \subset \Aut(X),G'\subset \Aut(Y)$ with non-symplectic index 6.
    \item[2)] Assume that $T(Y)$ (and thus $T'$, \ref{indexmatches}) is a scaling of $L_{4}$ and 
        \[
    [\NS^{G_s}:\langle H \rangle \oplus T' ] = [H^2(Y,\mZ): \langle H \rangle \oplus T(Y)]\in \{ 1,2\}.
    \] 
    Then there are extensions $G \subset \Aut(X),G'\subset \Aut(Y)$ with non-symplectic index 4.
    \item[3)] Assume that
        \[
    [\NS^{G_s}:\langle H \rangle \oplus T' ] = [H^2(Y,\mZ): \langle H \rangle \oplus T(Y)]\in \{ 1,2\}.
    \] 
    Then there are extensions $G \subset \Aut(X),G'\subset \Aut(Y)$ with non-symplectic index 2.
    \end{itemize}
\end{Corollary}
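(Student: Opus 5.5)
The plan is to assemble Corollary \ref{Alldecompositions} from the preceding results, treating the complex side and the superspecial side separately in each of the three cases.

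\emph{Complex side.} Here I would invoke Proposition \ref{simonhashimotoresult} directly. In case 1), $T(Y)\cong L_6[a]$ and $m=1$, so there is an index $6$ extension $G'\subset\Aut(Y)$. In case 2), $T(Y)\cong L_4[a]$ and $m\in\{1,2\}$, which gives index $4$. In case 3), $m\in\{1,2\}$, which gives index $2$. The only point to check is that the hypotheses of Proposition \ref{simonhashimotoresult} are satisfied: $\langle H\rangle$ is exactly $T(Y)^\perp$ in $\hyz^{G'_s}$, and $m$ is the stated index. One could also rerun the argument of Proposition \ref{nothree}: extend the rotation of $T(Y)$ by the identity on $\langle H\rangle$, then to $\hyz$ by Corollary \ref{hashimotosurjective}, and check the Torelli conditions.

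\emph{Superspecial side.} I would first make sure $H$ may be taken ample in $\NS$, using Lemma \ref{pIsLargeEnough} together with Remark \ref{PIsSmall}. Case 3) is then Proposition \ref{index2}. By the proof of Lemma \ref{einschränkungIndex}, $T'$ is a rescaling $C'[p]$ because $p\nmid H^2$, so $l_p(A_{T'})=2$; the gluing index is $1$ or $2$ by hypothesis. The isometry $-\id$ on $T'$ extends and acts as $-\id$ on $A_\NS$, so the crystalline Torelli theorem applies.

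For cases 1) and 2), Lemma \ref{indexmatches} gives $T'\cong L_6[-pa]$, respectively $L_4[-pa]$. Then:
\begin{itemize}
\item I would extend the order $6$ (resp.\ $4$) isometry of $T'$ to $\NS^{G_s}$ by the identity on $\langle H\rangle$. This is automatic when the index is $1$. When the index is $2$, I would use the argument in Proposition \ref{index4}: the glue subgroup $\{0,\tfrac12(x+y)\}$ is fixed by the rotation, and rescaling by the odd number $-p$ does not affect the $2$-part.
\item Next I would extend to $\NS$ via Theorem \ref{Hashimotosurj}, gluing along the full discriminant group of $\NS_{G_s}$.
\item Ampleness is preserved since $H$ is fixed.
\end{itemize}

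The main obstacle is the characteristic subspace condition of Theorem \ref{crystallinetorelli}. The induced action on $A_\NS$ is just the rotation's action on the $p$-part of $A_{T'}$, which does not depend on $a$, $H$ or $G_s$. So it suffices to exhibit one realization in each relevant congruence class: the Fermat quartic with $F_{384}.4$ for $p\equiv3\bmod4$, and the $M_9.6$ sextic double plane for $p\equiv5\bmod6$, exactly as in Propositions \ref{index4} and \ref{index6}.

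For other $p$ I would argue that the order $4$ (resp.\ $6$) rotation cannot preserve $C_1$ and $C_2$, but this is not needed, because the corollary only asserts existence. This is where care is required, and I expect the intended statement to carry implicitly the congruence conditions of Propositions \ref{index4} and \ref{index6}. I would therefore state cases 1) and 2) for $p\equiv5\bmod6$ and $p\equiv3\bmod4$ respectively, and deduce them from those propositions.
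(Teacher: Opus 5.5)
Your assembly matches the paper's proof, which just cites Brandhorst--Hashimoto for $Y$ and Propositions \ref{index4}, \ref{index6}, \ref{index2} for $X$. Your treatment of case 3) and of the extension and ampleness steps follows it.

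The gap is in your last paragraph. You do not prove 1) and 2) as stated; you propose adding the hypotheses $p\equiv 5 \bmod 6$, resp.\ $p\equiv 3 \bmod 4$, without showing that nothing is lost. The missing observation is that these congruences are \emph{forced} by the hypotheses:
\begin{itemize}
\item Since $p\nmid H^2$ and $p\nmid \det \NS_{G_s}$, no gluing along $p$ occurs. So $T'\otimes\mathbb Z_p$ carries the $p$-adic symbol $p^{\epsilon 2}$ of $\NS$, with $\epsilon=-\left(\frac{-1}{p}\right)$.
\item Also $p\nmid\det T(Y)$, because the glue index divides $H^2$ and $p\nmid\det H^2(Y,\mathbb Z)^{G'_s}$. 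Hence $\det T'=p^2\det T(Y)$ and $\epsilon=\left(\frac{\det T(Y)}{p}\right)$. So the hypotheses already imply condition (\ref{conditionsupsingred}).
\item For $T(Y)\cong L_4[a]$, $\det T(Y)=4a^2$ is a square, so $\left(\frac{-1}{p}\right)=-1$, i.e.\ $p\equiv 3\bmod 4$.
\item For $T(Y)\cong L_6[a]$, $\det T(Y)=3a^2$, so $\left(\frac{-3}{p}\right)=-1$, i.e.\ $p\equiv 5 \bmod 6$.
\end{itemize}
In every other residue class the hypotheses of 1) and 2) cannot hold, so those cases are vacuous. Adding this computation makes your argument prove the corollary as stated. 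It is also exactly what justifies the paper's appeal to Propositions \ref{index4} and \ref{index6}, which assume these congruences.

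Your side claim that for other $p$ the rotation ``cannot preserve $C_1$ and $C_2$'' is mistaken. The order $4$ (resp.\ $6$) rotation $\rho$ of $T'$ has determinant $1$. It acts on $A_{\NS}$, identified with the $p$-part $\tfrac1p T'/T'\cong T'/pT'$ of $A_{T'}$, by $\rho \bmod p$, again of determinant $1$. A determinant-one isometry of the anisotropic plane $A_{\NS}$ preserves each of the two isotropic lines of $A_{\NS}\otimes k$, and these are $C_1$ and $C_2$. So the crystalline Torelli condition never fails for such rotations; what fails for other $p$ is the existence of the configuration. This also gives a model-free substitute for the Fermat quartic and $M_9$ checks. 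Since $SO(A_{\NS})$ is cyclic of order $p+1$, it recovers $4\mid p+1$, resp.\ $6\mid p+1$, directly.
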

\begin{proof}
    This follows directly from the results of Brandhorst and Hashimoto \parencite[Section 6]{brandhorst-hashimoto} and Propositions \ref{index4}, \ref{index6} and \ref{index2}.
\end{proof}

\begin{proof}[Proof of Theorem \ref{CorrespondenceComplexSupSpe}]
    Assume that $G\subset \Aut(X)$ is a group such that $G_s\neq G$ is the symplectic subgroup. Then $G$ fixes an ample divisor $H$. Let $T'$ denote the orthogonal complement of $\langle H \rangle$ in $\NS^{G_s}$. By Lemma \ref{einschränkungIndex} and Proposition \ref{nothree}, we know that $[\NS^{G_s}:\langle H \rangle \oplus T']\in \{1,2 \}$. The proposed complex $K3$ surface $Y$ exists due to Lemma \ref{fromptocomplex} and there is a subgroup $G'$ of the same non-symplectic index as $G$ due to Corollary \ref{Alldecompositions}.\\
    For the converse, assume that the complex $K3$ surface $Y$ exists and $G'\subset \Aut(Y)$ contains a Mukai group $G'_s$. Due to Lemma \ref{fromCtoP}, the ample divisor fixed by $G'$ primitively embeds into $\NS^{G_s}$ with orthogonal complement $T'$. Due to Proposition \ref{simonhashimotoresult}, $[\NS^{G_s}:\langle H \rangle \oplus T']\in \{1,2 \}$ and thus by Corollary \ref{Alldecompositions}, there is a subgroup $G\subset\Aut(X)$ with symplectic subgroup $G_s$ and the same non-symplectic index as $G'$.
\end{proof}

\begin{proof}[Proof of Theorem \ref{Mukaimainresult}] By Theorem \ref{CorrespondenceComplexSupSpe}, it suffices to calculate for each complex $K3$ surface $Y$ in \parencite[Section 6]{brandhorst-hashimoto}, for which primes the condition 
    \[
    \left ( \frac{\det(T(Y))}{p}\right) = - \left ( \frac{-1}{p}\right)
    \]
is fulfilled. For such primes, we then know that the superspecial $K3$ surface in this characteristic has an action of an extension of the same non-symplectic index. As an example, we calculate this for the group $T_{48}$:\\
There are three complex $K3$ surfaces with an action of a non-trivial extension of $T_{48}$. Their transcendental lattices are listed below, as well as the largest possible index $n$ of an extension of $G_s$, the value of $H^2$ and the primes for which we have $\left ( \frac{\det(T(Y))}{p}\right) = -\left ( \frac{-1}{p}\right)$.
    \begin{center}
        \begin{tabular}{|c|c|c|c|}
        \hline
             $H^2$&$T(Y)$&$n$& conditions on $p$  \\
             \hline 
             2 & $\begin{pmatrix}
                 16 & 8 \\ 8 & 16
             \end{pmatrix}$ & 6 & $p\equiv 5 \mod 6$ \\
             16 & $\begin{pmatrix}
                 2 & 0 \\ 0 &48
             \end{pmatrix}$ & 2& $p \equiv 13,17,19,23 \mod 24$\\
             48 & $\begin{pmatrix}
                 2 & 0 \\ 0&16
             \end{pmatrix}$ & 2& $p \equiv 5,7 \mod 8$\\
             \hline
        \end{tabular}
    \end{center}
    As a consequence, for every group $G\subset \Aut(X)$ that contains $T_{48}$ as its symplectic subgroup the following holds for the non-symplectic index $n$:
    \begin{itemize}
        \item $n\in \{1,2,3,6\}$ if $p\equiv 5\mod 6$.
        \item $n\in\{1,2 \}$ if  $p\equiv 7,13,19,23 \mod 24$.
        \item $n=1$ if $p\equiv 1 \mod 24$.
    \end{itemize}
    Furthermore, there is a subgroup $G$ such that every possible value of $n$ in this list is obtained.
    We may repeat this process for every Mukai group and obtain the table in Theorem \ref{Mukaimainresult}.
\end{proof}

\begin{Remark}\label{l27}
     Consider the group $L_2(7)$. As is shown in \parencite{brandhorst-hashimoto}, there are two singular $K3$ surfaces with an action by an index 2 extension of $L_2(7)$ which have transcendental lattices of the same genus which are not isomorphic. As a consequence, these $K3$ surfaces are not isomorphic as well (see \parencite[Theorem 4]{Shioda_Inose_1977}]). \\
    But for a prime $p\equiv 11,29,37,43,51,53 \mod 56$, the associated lattices $T'$ are isomorphic.

\footnotesize
\begin{center}
    \begin{tabular}{|c|c|c|c|c|c|}
    \hline
    $H^2$&$T(Y)$&genus of $T(Y)$  &possible $T'$& genus of $T'$ & conditions on $p$\\
    \hline

        14&
        $\begin{pmatrix}
        2 & 0 \\ 0 & 28
        \end{pmatrix}$ & $[2^{+1} 4^{+1}]_0 7^{+1}$
         
        &    $\begin{pmatrix}
        -6 & -2 \\ -2 & -10
        \end{pmatrix}$
        & $[2^{+1} 4^{+1}]_4 7^{+1}$

        & $p \equiv 11,29,37,43,51,53 \mod 56$\\
        &&& $\left\{\begin{matrix}
        
    \begin{pmatrix}
             -2 &  0 \\
  0& -28 
        \end{pmatrix}\\ \begin{pmatrix}
            -4  & 0\\
  0 &-14
        \end{pmatrix}\end{matrix}\right \}
        $ & $[2^{+1} 4^{+1}]_0 7^{-1}$
        
        & $p \equiv 17,31,33,41,47,55 \mod 56$ 
        \\

        \hline

        14 & $\begin{pmatrix}
            4 & 0 \\0 &14
        \end{pmatrix}$
        &$[2^{+1} 4^{+1}]_0 7^{+1}$  & $\begin{pmatrix}
        -6 & -2 \\ -2 & -10
        \end{pmatrix}$
        & $[2^{+1} 4^{+1}]_4 7^{+1}$
     & $p \equiv 11,29,37,43,51,53 \mod 56$
    \\
     &&& $\left\{\begin{matrix}
        
    \begin{pmatrix}
             -2 &  0 \\
  0& -28 
        \end{pmatrix}\\ \begin{pmatrix}
            -4  & 0\\
  0 &-14
        \end{pmatrix}\end{matrix}\right \}
        $ & $[2^{+1} 4^{+1}]_0 7^{-1}$
        
        & $p \equiv 17,31,33,41,47,55 \mod 56$ 
        \\

    \hline
    
    \end{tabular}

\end{center} 
\normalsize
    This is the only example of a Mukai group $G_s$ where there are two non-isomorphic lattices $T(Y)$ which induce the same lattice $T'$.
\end{Remark}

\subsection{Proof of Theorems \ref{mainmainresult}, \ref{TameCase} and \ref{alotofcongruences} }\label{mainmainresultproof}
We will collect the proofs of the main Results \ref{mainmainresult}, \ref{TameCase} and \ref{alotofcongruences}. We will also briefly summarize the methods used.

\begin{proof}[Proof of Theorem \ref{TameCase}]
As has been proven by Ohashi and Schütt, the maximal symplectic groups acting in characteristic $p >11$ are either maximal groups that can be found in the classification of Mukai or finite, saturated subgroups $G_s \subset \Aut_s(X)$ associated to a invariant lattice $\NS^{G_s}$ of rank $2$ (Lemma \ref{asfixedpointlattice}). \\
To prove (i), we have considered all groups which act tamely on $X$ and have a coinvariant lattice of rank 20 in the classification of Höhn and Mason \parencite{HOHN2016618}. The possible extensions were then investigated in Section \ref{fourorbits}. Due to the location of the discriminant group in the decomposition into the invariant lattice and co-invariant lattice, we concluded that no extension can occur (Proposition \ref{rk2222}).\\
To prove (ii), we proved a stronger statement in Theorem \ref{Mukaimainresult}. For this, we related extensions of Mukai groups (i.e.\ the maximal groups over $\mathbb C$) on certain complex $K3$ surfaces with extensions of Mukai groups on the superspecial $K3$ surfaces (Theorem \ref{CorrespondenceComplexSupSpe}). It is then a direct computation to obtain the table in case (ii). 
\end{proof}
\begin{proof}[Proof of Theorem \ref{mainmainresult}]
    
    Let $G_s$ be a finite symplectic group acting on a $K3$ surface in characteristic $p> 11$ (i.e.\ $G_s$ is tame). Assume that no faithful action of $G_s$ appears on a complex $K3$ surface. Due to Theorem 1.1 in \parencite{ohashi2024finite}, we know that such groups $G_s$ only act on the superspecial $K3$ surface in this characteristic and on no other $K3$ surface. Consequently, it suffices to show the statement of Theorem \ref{mainmainresult} for the superspecial $K3$ surface $X$, i.e.\ show that the action of $G_s$ on $X$ only has trivial extensions.\\
    Additionally, in Proposition \ref{rk2222} we have proved that maximal groups that do not appear over the complex numbers have no non-trivial extension. It remains to show that this also holds true if the symplectic group is not maximal but does not appear over the complex numbers. We will denote this non-maximal group as $\Gamma_s$ and assume that $G_s$ is a maximal group which contains $\Gamma_s$ (or, equivalently, that $G_s$ is the saturation of $\Gamma_s$, see Section \ref{MatthiasandOhashi}).\\
    The groups that do not occur over the complex numbers are groups $\Gamma_s$ such that $\NS_{\Gamma_s}=\NS_{G_s}$ for a maximal group $G_s$ that does not occur over the complex numbers. Otherwise, the invariant lattice of $\NS^{\Gamma_s}$ would have rank $\geq 3$ and thus would be contained in a maximal group over $\mathbb C$ (Lemma \ref{asfixedpointlattice}).\\
    But then, the same argument as in the proof of Proposition \ref{rk2222} may be applied to $\Gamma_s$ and we may conclude that there are no non-trivial extensions of $\Gamma_s$.    
\end{proof}

\begin{proof}[Proof of Theorem \ref{alotofcongruences}] Let $G_s$ be a maximal symplectic group in $\text{Aut}(X)$. If $G_s$ is a group that does not appear over the complex numbers, we know it has no non-trivial extensions. As a consequence, in order to always have a non-trivial extension it is a necessary condition that the Mukai classification holds.
By \parencite[Theorem 1.6]{ohashi2024finite}, this is the case if and only if $p \equiv \pm(1^2),\pm(11^2),\pm(13^2),\pm(17^2),\pm(19^2),\pm(23^2)\mod 840$.\\Furthermore, we do not only need that the Mukai classification holds, but also that all Mukai groups have a non-trivial extension. This leads to the condition \\$p\equiv -1,-(11^2),-(13^2),-(17^2),-(19^2),-(23^2)\mod 840$.  
\end{proof}

\section{Extensions of wild actions}\label{wild}
Let $p \leq 11$. We exhibit the maximal extensions of the wild maximal symplectic groups in \parencite{ohashi2024finite}. The tame maximal group acting in characteristic 11 only has trivial extensions by Proposition \ref{rk2222}. All other groups with maximal symplectic actions in characteristic $\leq 11$ are wild.\\
We show which groups only have trivial extensions and give explicit realizations for the ones that admit non-trivial extensions. We also argue why these extensions have the optimal non-symplectic index for the given group. The goal is to prove Result \ref{wildcase}.

In the classification of Ohashi and Schütt \parencite[Sections 7 and 9]{ohashi2024finite}, the wild groups acting on the superspecial $K3$ surfaces in characteristic $\leq 11$ have an associated invariant lattice of rank 1 or 2.
 
\subsection{Trivial extensions}\label{rk1noext}
\subsubsection{\texorpdfstring{Characteristic $p\geq 5$}{Characteristic p greater than 3}}
We claim that there are groups with an associated invariant lattice of rank 1 which do not possess a non-trivial extension. Namely, we will consider the groups in the table below, which also contains the information of the characteristic $p$ they appear in and the local genus symbols of the invariant and coinvariant lattice.  
\begin{center}{\rowcolors{2}{white}{lightgray}
\begin{tabular}{ |c|c|c|c| }
\hline
$G_s$& $p$  & local genus symbol of $\NS^{G_s}$ & local genus symbol of $\NS_{G_s}$\\
\hline
  $\mathfrak A_8$ & $5$ &  $4^{+1}_73^{+1}5^{-1}$ & $4^{+1}_1 3^{+1} 5^{+1}$\\
 $2^4:(3 \times \mathfrak A_5):2$ & $5$ & $8^{+1}_7 3^{+1}5^{+1}$ & $8^{+1}_1 3^{-1}5^{-1}$\\
 $L_3(4).2$ & $7$ & $4^{-1}_53^{+1}7^{-1}$&$4^{-1}_33^{-1}7^{-1}$\\
 $2^4.\mathfrak A_7$ & $7$ &  $8^{+1}_77^{+1}$ &$8^{+1}_17^{+1}$\\
 $M_{22}$ & $11$ &  $4^{-1}_3 11^{+1}$ & $4^{-1}_5 11^{+1}$\\
 $M_{11}$ & $11$ &  $2^{+1}_1 3^{+1}11^{-1}$&$2^{+1}_7 3^{-1}11^{-1}$\\
\hline
\end{tabular}}
\captionof{table}{Maximal wild groups without extensions in characteristic $\geq5$}\label{maximalrk1} 
\end{center}

\noindent These groups are exactly the groups in \parencite[Section 7]{ohashi2024finite} where $p\geq 5$ divides the order of the discriminant group. Each group has a invariant lattice of rank 1. Some of these actions are also noted by Kond\={o} \parencite{kondo2005}.\\
Let $G_s$ be a group from Table \ref{maximalrk1} and $X$ be the superspecial $K3$ surface in the associated characteristic $p$. Assume that $G\subset \Aut(X)$ is a finite group with symplectic part $G_s$.\\  
By our assumption, $G$ fixes an ample divisor $H$, so the rank of $\NS^G$ is at least 1. As a consequence, $G$ must fix the generator of $\NS^{G_s}$ and thus we may conclude $\NS^{G_s}=\NS^{G}$. Furthermore, both the coinvariant and invariant lattices of the groups in Table \ref{maximalrk1} have a discriminant groups whose $p$-Sylow groups are isomorphic to $\mathbb Z/p\mathbb Z$. From this, we can see that no gluing along $p$ occurs.
This implies that the generator of $\NS^{G_s}$ is not dually primitive in $\NS$ and is fixed by any extension of $G_s$.\\
Since all groups in table 6 act in characteristic $p\geq 5$, we may apply Lemma \ref{AusschlussDuallyPrimitive}. Hence, $G = G_s$ must hold and there exists no non-trivial extension of these groups. We may conclude this case by stating the following Theorem.
\begin{Theorem}
    Let $G$ be a finite group acting faithfully on the superspecial $K3$ surface in characteristic $p$, such that $G_s$ is a maximal group in Table \ref{maximalrk1}. Then $G = G_s$.
\end{Theorem}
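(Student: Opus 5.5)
The plan is to reduce the statement to Lemma \ref{AusschlussDuallyPrimitive}, which applies because every characteristic in Table \ref{maximalrk1} is at least $5$. Let $G\subset\Aut(X)$ be finite with $G_s$ a group from Table \ref{maximalrk1}. First I will show that $\NS^G=\NS^{G_s}$. By Lemma \ref{usefulProperties} c), $\NS^G$ contains an ample class $H$, so $\rk\NS^G\geq 1$. By Lemma \ref{usefulProperties} d), $\NS^G\subset\NS^{G_s}$, and the latter has rank $1$. Hence the two lattices have the same rank. Since $\NS^G$ is primitive, being the kernel of $g-\id$ for $g \in G$, the two lattices coincide. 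In particular the primitive generator $x$ of $\NS^{G_s}$ is fixed by $G$.

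Next I will show that $x$ is not dually primitive in $\NS$. Consider $\NS$ as the primitive extension of $\NS^{G_s}\oplus\NS_{G_s}$. From the genus symbols in Table \ref{maximalrk1}:
\begin{itemize}
\item[(a)] The $p$-parts of $A_{\NS^{G_s}}$ and $A_{\NS_{G_s}}$ are both $\mathbb Z/p\mathbb Z$.
\item[(b)] $A_\NS\cong(\mathbb Z/p\mathbb Z)^2$.
\end{itemize}
By the determinant formula $|\det\NS|=|A_{L_1}/H_{L_1}|\cdot|A_{L_2}/H_{L_2}|$, the $p$-part of the glue group must be trivial, since the two $p$-parts must survive to give $p^2$. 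So no gluing along $p$ occurs. Then $x/p\in(\NS^{G_s})^\vee$ pairs integrally with $\NS_{G_s}$. It also pairs integrally with the glue vectors, because these have order prime to $p$ in the discriminant group: a glue vector is $u+v$ with $u\in(\NS^{G_s})^\vee$ of order prime to $p$ modulo $\NS^{G_s}$, so $(x/p.u)$ is integral. Hence $(x.\NS)\subseteq p\mathbb Z$, and $x$ is primitive but not dually primitive. Concretely, $x^2$ is divisible by $p$ exactly once, as the symbol $p^{\pm1}$ shows.

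Finally, Lemma \ref{AusschlussDuallyPrimitive} applied to $x\in\NS^G$ gives $G/G_s$ trivial, i.e.\ $G=G_s$.

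The only step needing care is the no-gluing argument at $p$. I would check it directly from the symbols, comparing $\det\NS=-p^2$ with $\det\NS^{G_s}\cdot\det\NS_{G_s}$, whose $p$-part is $p^2$. The glue index squared divides the quotient of the two determinants, so its $p$-part is $1$.
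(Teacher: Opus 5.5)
Your proposal is correct and follows the paper's argument. You get $\NS^G=\NS^{G_s}$ from the rank and primitivity, rule out gluing along $p$ because both $p$-parts are $\mathbb Z/p\mathbb Z$ while $A_{\textsc{NS}(X)}\cong(\mathbb Z/p\mathbb Z)^2$, conclude that the generator is primitive but not dually primitive, and then apply Lemma \ref{AusschlussDuallyPrimitive} since $p\geq 5$. The paper only asserts the no-gluing step and the non-dual-primitivity of the generator, and your determinant/index computation and the explicit check that $(x/p.\,y)\in\mathbb Z$ for all $y\in\textsc{NS}(X)$ correctly supply both.
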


\subsubsection{Characteristic 2}
In characteristic 2, there is a wild automorphism group of the superspecial $K3$ surface that has no non-trivial extensions.
\begin{Example}
    Consider $G_s=\text{Aut}(\mathfrak S_6)$ with $\vert G_s \vert = 1440$. This group acts as a maximal group of symplectic automorphisms on the superspecial $K3$ surface in characteristic 2. 
    The co-invariant lattice $\NS_{G_s}$ is a rank 21 lattice with local genus symbol $2^{+3}_5 3^{-1} 5^{+1}$ as can be seen in \parencite[no. 194]{HOHN2016618}. Since the invariant lattice has rank 1, this implies that we glue along a subgroup of order 30. In particular, there are two independent elements $x,y$ of $\NS_{G_s}$ that are not dually primitive in $\NS$ and the discriminant group $A_\NS$ is generated by rational multiples of $x,y$. In other words, $A_\NS$ can be considered as a subgroup of $A_{\NS_{G_s}}$. 
    \\
    Assume that $G_s\subset G\subset\Aut(X)$ is a non-symplectic finite group. It is known that $G$ must act on the coinvariant lattice of $G_s$, i.e.\ there is a map $\psi\colon G\rightarrow O(\NS_{G_s})$. Since $G$ induces the identity on $\NS^{G_s}$, this map is an embedding. The non-symplectic index $[G:G_s]$ then has to divide $[O(\NS_{G_s}):\psi(G_s)]$.

    Using the coinvariant lattice from the supplementary files of \parencite{HOHN2016618}, one can may compute the isometry group of $\NS_{G_s}$ (this is a group isomorphic to the isometry group of $\Lambda_{G_s}$ due to Corollary \ref{Leech}). The order of $O(\Lambda_{G_s})$ is 11520, so the non-symplectic index of any extension divides $11520/1440=8$. With Theorem \ref{NonSympIndex} we may conclude that the non-symplectic index must be one.
\end{Example}

\subsection{Explicit Realizations}\label{Char23}

\begin{Possibility}\label{AusführlichesBeispiel}
    
Let $G_s = U_3(5)$. This group acts symplectically on the superspecial $K3$ surface in characteristic 5, as has been shown by Dolgachev and Keum \parencite{DolgachevKeum2009}. This action can be seen on the model
\[
X = V(x^6+y^6+z^6-w^2)\subset \mathbb P(1,1,1,3).
\]
Then $\textsc{GU}(3,\mathbb F_{5^2})$ is the group of matrices over $\mathbb F_{5^2}$ that leave the hermitian form $x^6+y^6+z^6$ invariant. We may extend the action of $\textsc{GU}(3,\mathbb F_{5^2})$ to $X$ by letting each matrix act on the coordinates $(x,y,z)$ and as the identity on $w$.  \\
Consider the subgroup $\textsc{SU}(3,\mathbb F_{5^2})$, where the determinant of the matrix is 1. This group contains multiples of the identity matrix. They are $\zeta_3\cdot \mathbb E_3$ and $(\zeta_3^2)\cdot \mathbb E_3$, with $\zeta_3$ being a primitive third root of unity and $\mathbb E_3$ the identity matrix. These matrices induce the identity when applying them in weighted projective space since the following holds
\begin{align*}
     ( \zeta_3^i)\cdot\mathbb E_3([x:y:z:w]) = [\zeta_3^i x: \zeta_3^i y:\zeta_3^i z: \underbrace{w}_{= \zeta_3^{3i}w}] = [x : y:z:w]. 
\end{align*}
This implies that the action of the quotient by these matrices is well-defined.
We obtain a symplectic action of $U_3(5)= \text{PSU}(3,\mathbb F_{5^2})$ on $X$. 
Additionally, the action must be symplectic, since the group is simple.\\

We claim that the quotient $G\coloneqq\textsc{GU}(3,\mathbb F_{5^2})/\langle (\zeta_3)\cdot\mathbb E_3\rangle $ is an index 6 extension of $G_s$. 
We see that $G$ induces a faithful action on $X$, since $\textsc{GU}(3,\mathbb F_{5^2})$ preserves the defining equation of $X$ and elements which induce the identity are removed by taking the quotient. Furthermore, $G$ clearly contains $G_s$ and thus the symplectic part of $G$ must be equal to $G_s$ since $G_s$ is a maximal symplectic group in characteristic 5.
We may conclude that $G$ is an extension of $G_s$ with non-symplectic index 6. This is the highest possible non-symplectic index in characteristic 5 due to Theorem \ref{NonSympIndex}. An isomorphic description of $G$ is $\text{PGU}(3,\mathbb F_{5^2}): 2 $.

\noindent This finite group action is also described by Katsura, Kond\=o and Shimada in \parencite{Katsura_Kondo_Shimada_2014}.

\end{Possibility}

\begin{Possibility} The group $G_s = U_4(3)$ acts symplectically on the superspecial $K3$ surface in characteristic 3. This is a classical example that can be found in \parencite{DolgachevKeum2009}, but has been known beforehand to Beauville, Mukai, Shioda and Tate. The action can be realized on the Fermat quartic
\[
X = V(x^4+y^4+z^4+w^4) \subset \mathbb P^3.
\]
The group $\text{GU}(4,\mathbb F_{3^2})$ consists of matrices over $\mathbb F_{3^2}$ that leave the Hermitian form $x^4+y^4+z^4+w^4$ invariant. The group $U_4(3)= \text{PSU}(4,\mathbb F_{3^2})$ (i.e.\ the group of elements of $\text{GU}(4,\mathbb F_{3^2})$ with determinant 1 quotiented by multiples of the identity) acts symplectically on $X$. This action can be, again, extended by considering $G=\text{PGU}(4,\mathbb F_{3^2})$, which is $\text{GU}(4,\mathbb F_{3^2})$ quotiented by multiples of the identity. We can generate $G$ from $G_s$ by adding the multiplication of one variable by a primitive fourth root of unity. \\
The index of the extension is 4, which is maximal by Theorem \ref{NonSympIndex}. This non-symplectic group is also described in \parencite{Katsura_Kondo_Shimada_2014}.
\end{Possibility} 

\begin{Possibility}\label{S44index4}
    The group $G_s = 2^4:\mathfrak S_{3,3}$ acts symplectically on the superspecial $K3$ surface in characteristic 3 \parencite[Prop. 10.2]{ohashi2024finite}. \\
    The realization of Ohashi and Schütt is given by an action on the Kummer surface $X = \textrm{Km}(E_0\times E_0)$, where $E_0$ is the (unique) supersingular elliptic curve in characteristic 3 with $j$-invariant 0 \parencite[section 11.3]{ohashi2024finite}. Note that such a Kummer surface has Artin invariant 1. We obtain two different fibrations that can be seen on the affine model given by the equation
    \[
    (t^3-t) y^2=(x^3-x).
    \]
    The fibrations map to $\mathbb P^1_t$ and $\mathbb P^1_x$. Both fibrations have 4 singular fibres of type $\text{I}_0^*$ at 0,1,2 and the point at infinity. The symmetric group $\mathfrak S_4$ acts on $\mathbb P^1_t$ and $\mathbb P^1_x$ by permuting the singular fibres. Then a symplectic action on $X$ is given by the group $G_s$ generated by $\mathfrak A_{4,4}$ together with the switch of fibrations
    \[
    (t,x,y)\mapsto(x,t,-1/y).
    \]
    We would like to argue that there exists an index four extension of $G_s$, which is maximal in characteristic 3 due to Theorem \ref{NonSympIndex}.\\
    Some care is needed to realize the action of $\mathfrak S_4 \times \mathfrak S_4$ on $X$. For instance, when taking the permutation $\tau$ leaving $\mathbb P^1_t$ invariant and switching $1$ and $-1$ in $\mathbb P^1_x$, we change the value of $(x^3+x)$ by $-1$. This implies that such a action can only be realized by multiplying $y$ with a primitive fourth root of unity, which induces an action of the fourth root of unity on $H^{2,0}(X)$. This is also noted by Ohashi and Schütt in their realization. But although $\tau^2 = \id\in \mathfrak A_{4,4}$, we map $y$ to $-y$ so the action cannot be symplectic. This is similar to the situation in Realization \ref{AusführlichesBeispiel}.\\
    Explicitly we may take the group generated by the elements of $G_s$ and 
    \[
    \tau\colon(t,x,y)\mapsto(\tau(x),\tau(x),\zeta_4y),
    \]
    where $\zeta_4$ is a primitive fourth root of unity, $\tau\in\mathfrak S_{4,4}\setminus\mathfrak A_{4,4}$ and $\tau(x)$ and $\tau(t)$ are the respective induced actions on $\mathbb P_x$ and $\mathbb P_t$.   
    One can check that this generates a finite group $G$. The symplectic subgroup of $G$ is necessarily $G_s$ since $G_s$ is maximal in characteristic 3. Thus, we get the extension $(2^4:\mathfrak S_{3,3}).4$. 
  
\end{Possibility}

\begin{Possibility}\label{UnusalBehaviour}
    The group $G_s = 2^2.\mathfrak A_{4,4}$ acts symplectically on the supersingular $K3$ surface of Artin invariant 1 in characteristic 3. 
    The action of $G_s$ can be realized as the combination of an action of $T_{48}$ and $H_{192}$. We will abbreviate the discussion here and point to \parencite[section 11.3.2]{ohashi2024finite} for more details. The group $T_{48}$ acts on the resolution $\Tilde{Y}=X$ of 
    \[
    Y = V(x_3^2 = x_0x_1(x_0^4+x_1^4) +x_2^6)\subset \mathbb P(1,1,1,3).
    \]
    The variety $Y$ has 8 singularities of type $A_2$. After resolving them, we obtain 16 exceptional lines $E_i {^\pm}$, where each $E_i^{+}$ meets exactly one other exceptional line, $E_i^{-}$. Furthermore, there are 8 pairs of lines $l_j^{\pm}$. Each of these lines meets 3 exceptional divisors each (all with the same sign) and one other line $l_k^{\mp}$ with opposite sign.\\
    The combined action of $T_{48}$ and $H_{192}$ preserves this arrangement of lines $E_i^+$ and $l_j^-$ as well as the set $E_i^-$ and $l_j^+$ and we can arrange them into the singular fibres of the fibration 
    \[
    (t^3-t) y^2=(x^3-x).
    \]
    Note that this is the same fibration that has been discussed in Realization \ref{S44index4}.
    The automorphism obtained by multiplying $y$ with $-1$ also preserves both arrangements as it leaves the lines contained in the fibres invariant and consequently all other lines. As a consequence, it fixes the generators of the invariant lattice of $G_s$ in \parencite[Lemma 11.10]{ohashi2024finite}. Hence, we may extend $G_s$ with this non-symplectic automorphism and obtain an extension of index 2. 
\end{Possibility}
\begin{Lemma}
    Any extension of $2^2.\mathfrak A_{4,4}$ has non-symplectic index 1 or 2. This means that the extension in Realization \ref{UnusalBehaviour} has the highest possible non-symplectic index.
\end{Lemma}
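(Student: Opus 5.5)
Since $p=3$, Theorem \ref{NonSympIndex} gives $[G:G_s]\mid p+1=4$, so the index is $1$, $2$ or $4$. The statement therefore reduces to excluding index $4$. The crystalline Torelli theorem (Theorem \ref{crystallinetorelli}) and Lemma \ref{AusschlussDuallyPrimitive} are only stated for $p\geq5$, so I will not use them. Instead I will argue directly with the invariant and coinvariant lattices, in the spirit of the characteristic 2 example for $\text{Aut}(\mathfrak S_6)$.

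Suppose $G\supset G_s=2^2.\mathfrak A_{4,4}$ has index $4$, and let $g\in G$ map to a generator of $G/G_s\cong\mathbb Z/4$. By Remark \ref{eigenvalues}, $G$ preserves the orthogonal decomposition $\NS^{G_s}\oplus\NS_{G_s}\subset\NS$. From \parencite[Lemma 11.10]{ohashi2024finite}, the invariant lattice $\NS^{G_s}$ has rank $2$, with explicit generators. $G$ fixes an ample class $H\in\NS^{G_s}$ by Lemma \ref{usefulProperties}, so $g$ acts on the rank $2$ lattice $\NS^{G_s}$ fixing $H$. Hence $g$ acts on $\NS^{G_s}$ as the identity or as the reflection in $H^\perp$. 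In either case $g^2$ acts trivially on $\NS^{G_s}$.

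Next I analyse the discriminant group. By Theorem \ref{critsymp}, the image of $g$ in $O(q_{\NS})$ has order exactly $4$, where $A_\NS\cong(\mathbb Z/3)^2$ is anisotropic. The key point is to locate the $3$-part of the gluing. From the genus of $\Lambda_{G_s}$ (Table \ref{Rk2tame}, symbol $8_6^{-2}3^{-1}$) and the genus of $\NS^{G_s}$, I will determine how $A_\NS$ sits with respect to $A_{\NS^{G_s}}$ and $A_{\NS_{G_s}}$. The expected outcome is that the $3$-part of $A_{\NS^{G_s}}$ is large enough that $A_\NS$ is generated by classes coming from $\NS^{G_s}$, or at least that one nonzero class of $A_\NS$ is represented by an element of $\NS^{G_s}$.

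If $A_\NS$ is generated by classes coming from $\NS^{G_s}$, then $g^2$ acts trivially on $A_\NS$ because it is trivial on $\NS^{G_s}$. So $g^2$ is symplectic, contradicting index $4$.

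If only one nonzero class of $A_\NS$ comes from $\NS^{G_s}$, I argue as in Lemma \ref{AusschlussDuallyPrimitive}. Then $g^2$ fixes that class $\tilde x\neq0$, and also preserves its orthogonal complement $\tilde x^\perp$, which is a line not containing $\tilde x$ by anisotropy. On $\tilde x^\perp$, $g^2$ acts by an element of $\mathbb F_3^*=\{\pm1\}$. In $O(q_\NS)$, which is dihedral of order $8$ for $p=3$, the order-$4$ element squares to $-\id$, which moves $\tilde x$. This again gives a contradiction.

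The main obstacle is this genus and gluing computation at the prime $3$, which depends on the explicit invariant lattice from \parencite{ohashi2024finite}. If it does not cleanly yield a fixed nonzero class of $A_\NS$, the fallback is the method of the $\text{Aut}(\mathfrak S_6)$ example. Since $g^2$ is trivial on $\NS^{G_s}$, $g^2$ embeds in $O(\NS_{G_s})\cong O(\Lambda_{G_s})$. One then computes $|O(\Lambda_{G_s})|/|G_s|$ with Oscar, and shows that no element of the normalizer of $G_s$ maps to an element of order $4$ acting nontrivially on $A_\NS$ in a way compatible with the gluing. Together with the realization of index $2$ in Realization \ref{UnusalBehaviour}, excluding index $4$ gives the claim.
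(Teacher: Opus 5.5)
Your reduction is sound. By Theorem \ref{NonSympIndex}, $[G:G_s]\mid 4$. A generator $g$ of $G/G_s$ fixes an ample $H\in\NS^{G_s}$ and acts by $\pm1$ on the rank one lattice $H^\perp\cap\NS^{G_s}$, so $g^2$ is trivial on $\NS^{G_s}$. By Theorem \ref{critsymp}, the image of $g$ in $O(q_{\NS})$, which is dihedral of order $8$, has order $4$, so $g^2$ acts as $-\id$ on $A_{\NS}$.

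However, the step that makes this contradictory is missing. You need some nonzero $\tilde x\in A_{\NS}$ represented by an element of $\NS^{G_s}\otimes\mathbb Q$, and you only announce this as the \emph{expected outcome} of a gluing computation you do not carry out, with a fallback prepared in case it fails. As written, this is a gap.

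The gap closes without any explicit lattice data. Put $M=\NS^{G_s}$ and $N=\NS_{G_s}$, and let $\Gamma\subset A_M\oplus A_N$ be the glue group, with projections $\Gamma_M,\Gamma_N$, each of order $|\Gamma|$. An element $v\in M^\vee$ lies in $\NS^\vee$ if and only if its class lies in $\Gamma_M^\perp\subset A_M$. By primitivity of $M$, this subgroup injects into $A_{\NS}$, and the same holds for $N$. These two subgroups of $A_{\NS}$ have orders $|A_M|/|\Gamma|$ and $|A_N|/|\Gamma|$, whose product is $|A_M||A_N|/|\Gamma|^2=|A_{\NS}|=9$. The subgroup coming from $N$ is a $3$-group sitting inside $A_N$, whose $3$-part is $\mathbb Z/3$ by the symbol $8^{-2}_6 3^{-1}$, so it has order at most $3$. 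Hence the subgroup coming from $M$ has order at least $3$, and $\tilde x$ exists whether or not gluing along $3$ occurs.

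With this, your second branch finishes the proof. Equivalently: $g^2$ fixes $\tilde x$, acts by $\pm1$ on the complementary line $\tilde x^\perp$, and has determinant $1$, so it is trivial on $A_{\NS}$. The Oscar fallback is not needed.

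The paper argues differently. Its Claim rules out gluing along $3$ by a case analysis on the $3$-length of $A_{\NS^{G_s}}$ and the values of the discriminant form. This gives $\NS^{G_s}$ the genus $8^{-2}_2 3^{-1}$ and a splitting $A_{\NS}=\langle a\rangle\oplus\langle b\rangle$, with $a$ coming from $\NS^{G_s}$ and $b$ from $\NS_{G_s}$. Since $g$ preserves both lines, it acts by $\mathrm{diag}(\pm1,\pm1)$, so its order on $A_{\NS}$ is at most $2$; this uses neither the ample class nor the rank of $\NS^{G_s}$. Your first branch (all of $A_{\NS}$ coming from $\NS^{G_s}$) is exactly the situation that Claim excludes, so in reality you are always in the second branch. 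Once the counting above is added, your route is shorter: it replaces the delicate $3$-adic gluing analysis by the observation that $g^2$ is trivial on a rank two invariant lattice, at the price of not determining the genus of $\NS^{G_s}$.
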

\begin{proof}
    The co-invariant lattice $\NS_{G_s}$ is a negative definite rank 20 lattice  $8^{-2}_6 3^{-1}$. This implies that the invariant lattice $\NS^{G_s}$ is a rank 2 indefinite lattice.
    Since $\NS$ is a primitive extension of $\NS^{G_s}$ and $\NS_{G_s}$, we may conclude that we glue along a subgroup whose order is divisible by $8^2$. But apriori, it is not clear wether or not gluing along 3 occurs.
    \begin{Claim}
        $\det(\NS^{G_s})=3\cdot a$ for a suitable integer $a$ coprime to $3$.
    \end{Claim}
    \begin{proof}[Proof of Claim]
        Assume that the claim is false.
    The other possibility would be that $\det(\NS^{G_s})=3^3\cdot a$ for a suitable integer $a$ coprime to 3 and gluing along 3 occurs. 
    The subgroups $H_1\subset A_{\NS^{G_s}}$ and $H_2\subset A_{\NS_{G_s}}$ which we glue along have an order divisible by 3. 
    The 3-length of $\discgr{\NS^{G_s}}$ is bounded by the rank of $\NS^{G_s}$ and can be equal to 1 or 2. We may discard the possibility that the 3-length is 1 since one can calculate that the 3-Sylow group of the discriminant group would be isomorphic to $\mathbb Z /27 \mathbb Z$ and thus the primitive extension by this glue map would have a 3-Sylow group isomorphic to $\mathbb Z/9\mathbb Z$. \\
    If the 3-length of $\discgr{\NS^{G_s}}$ is 2, the 3-Sylow group $(\discgr{\NS^{G_s}})_3$ is isomorphic to $ \mathbb Z / 3\mathbb Z \times \mathbb Z /9\mathbb Z$ as a group. Furthermore, in order to glue into $\NS$, the subgroup we need to glue along is of the form $\langle 3a \rangle$, where $a \in \discgr{\NS^{G_s}}$ is an element of order 9. Otherwise, one can again calculate that the discriminant group of the primitive extension obtained by this gluing would have a discriminant group of the form $\mathbb Z/9\mathbb Z$.\\
    But such an element $3a$ always satisfies $q_{\discgr{\NS^{G_s}}}(a)\in \{0+2\mathbb Z\ 1+2\mathbb Z \}\subset \mathbb Q/2\mathbb Z$. Due to this, such a subgroup cannot be glued with the 3-part of $\discgr{\NS_{G_s}}$ due to the respective quadratic forms. Thus, the claim holds. 
    \end{proof}
   \noindent Thus, we may conclude that $\NS^{G_s}$ has the genus symbol $8^{-2}_23^{-1}$ and $\NS$ is a primitive extension of $\NS^{G_s}$ and $\NS_{G_s}$   . This implies that there are elements $x\in \NS^{G_s}$ and $y\in \NS_{G_s}$ which are not dually primitive in $\NS$. The discriminant group of $\NS$ can be generated by rational multiples of $x$ and $y$. We denote two such rational multiples of $x$ and $y$ as $a$ and $b$. We recall that any non-symplectic automorphism in an extension of $G_s$ still preserves the decomposition $\NS^{G_s}$ and $\NS_{G_s}$. Consequently, $a$ must be mapped to a multiple of $a$ and the same must hold for $b$.\\ 
   Assume that $g$ is a non-symplectic automorphism such that the group generated by $G_s$ and $g$ is a finite group. Denote the induced action of $g$ on $\discg$ as $f$. Then
    \[
    f\colon\discg \rightarrow \discg, \ f([a])\mapsto [n\cdot a], \ f([b])\mapsto  [m \cdot b] 
    \]
    for $n,m \in \{1,2 \}$. In other words, the class of $a$ (resp. $b$) in $\discg$ must be mapped to multiple of itself since $G/G_s$ is cyclic. Consequently, the non-symplectic index is at most 2.
\end{proof}

\begin{Possibility}
    The group $G_s=M_{21}.2_1\cong \text{PSL}(3,\mathbb F_{2^2}).2$ acts on the superspecial $K3$ surface in characteristic 2. A realization of the index 3 extension $\text{PGL}(3,\mathbb F_{2^2}).2$ is given in \parencite{Char2Dolgachev}. This non-symplectic index is optimal by Theorem 
\ref{NonSympIndex}.
\end{Possibility}

\subsection{Proof of Theorem \ref{wildcase}}\label{proofwild}
We may now collect the proof of Theorem \ref{wildcase} and briefly summarize the arguments.
\begin{proof}[Proof of Theorem \ref{wildcase}] 
    The maximal groups in characteristic $p\leq 11$ were classified by Ohashi and Schütt \parencite[Theorem 1.2]{ohashi2024finite}.
    The groups with trivial extensions are discussed in Section \ref{rk1noext}, whereas models with extensions of the highest possible non-symplectic index can be found in \ref{Char23}. There is a unique tame group in characteristic 11 that has no non-trivial extensions due to an analogous argument to the one in the proof of Proposition \ref{rk2222}. \\
    We note that the non-symplectic index is equal to 1 or $p+1$ for every group except for $2^2.\mathfrak A_{4,4}$ in characteristic 3, which is discussed in Realization \ref{UnusalBehaviour}. Here, the highest possible non-symplectic index is 2.
\end{proof}

\printbibliography

\end{document}